\newcommand{\pmat}[1]{\begin{pmatrix} #1 \end{pmatrix}}
\def\mbR{\mathbb{R}}
\newtheorem{theorem}{Theorem}
\newtheorem{example}{Example}
\newtheorem{lemma}{Lemma}
\newtheorem{assumption}{Assumption}
\newtheorem{proposition}{Proposition}
\newtheorem{corollary}{Corollary}
\newcommand{\an}[1]{{\color{black}#1}}
\newcommand{\Prob}[1]{\mathsf{Prob}#1 } 
\newcommand{\remove}[1]{}
\newcommand{\EXP}[1]{\mathsf{E}\!\left[#1\right] }
\newcommand{\EXPz}[1]{\mathsf{E}_{z,\xi}\!\left[#1\right] }
\def\sF{\mathcal{F}}
\def\Real{\mathbb{R}}
\def\g{\gamma}
\def\e{\epsilon}
\def\a{\alpha}
\def\an#1{{{\color{black}#1}}}
\def\sfy#1{{{\color{black}#1}}}
\author{Farzad~Yousefian,   
        Angelia~Nedi\'c, and   
		Uday V.~Shanbhag\thanks{The first two authors are with the Department of Industrial and Enterprise
Systems Engineering, University of Illinois, Urbana, IL 61801, USA,
		while the last author is with the Department of Industrial and
			Manufacturing Engineering, Pennsylvania State University,
						  University Park, PA 16802, USA. They are
							  contactable at 
{\tt\small \{yousefi1,angelia\}@illinois.edu} and {\tt \small
	udaybag@psu.edu}. Nedi\'{c} and
Shanbhag gratefully acknowledge the support of the NSF through the award
NSF CMMI 0948905 ARRA. Additionally, Nedi\'{c} has been funded by NSF
award CMMI-0742538 and Shanbhag has been supported by NSF award
CMMI-1246887.}}
\title{Distributed adaptive steplength stochastic approximation schemes
	for 
	Cartesian stochastic variational inequality problems}
\begin{document}
\maketitle
\thispagestyle{empty}
\pagestyle{plain}
\begin{abstract}
Motivated by problems arising in decentralized control problems and
non-cooperative Nash games, we consider a class of strongly monotone
Cartesian variational inequality (VI) problems, where the mappings either contain
expectations or their evaluations are corrupted by error. Such
complications are captured under the umbrella of Cartesian stochastic variational
inequality problems and we consider \an{solving} such
problems via stochastic approximation {(SA)} schemes. Specifically, we propose
a scheme wherein the
steplength sequence is derived by a rule that depends on problem
parameters such as monotonicity and Lipschitz constants. The proposed
scheme is seen to produce sequences that are guaranteed to converge
almost surely to the unique solution of the problem. To cope with
networked multi-agent generalizations, we provide requirements under
which independently chosen steplength rules still possess
desirable almost-sure convergence properties. In the second part
of this paper, we consider a regime where Lipschitz constants on the map
are either unavailable or difficult to derive. Here, we present a local
randomization technique that allows for deriving an approximation of
the original mapping, {which is then shown} to be Lipschitz continuous
with a prescribed constant. Using this technique, we introduce a
locally randomized SA {algorithm} and provide almost sure convergence
theory for the resulting sequence of iterates to an approximate solution
of the original variational inequality problem. Finally, the paper
concludes with some preliminary numerical results on a stochastic rate
allocation problem and a stochastic Nash-Cournot game.
	\end{abstract} \maketitle

\section{Introduction}\label{sec:introduction}
Multi-agent system-theoretic problems can collectively capture a range
of problems arising from decentralized control problems and
noncooperative games. In static regimes, where agent problems are
convex and agent feasibility sets are uncoupled, the associated
solutions of such problems are given by the solution of a suitably
defined Cartesian variational inequality problem. Our interest lies in
settings where the mapping arising in such problems is strongly monotone
and one of the following hold: (i) Either the mapping contains
expectations whose analytical form is unavailable; or (ii) The
evaluation of such a mapping is corrupted by error. In either case, the
appropriate problem of interest is given by a stochastic
variational inequality problem VI$(X,F)$ that requires determining an $x^* \in X$
such that
\begin{align}
	(x-x^*)^T F(x^*) \geq 0 \qquad  \hbox{for all }x \in X, 
\end{align}
where 
\begin{align}
\label{def-F} F(x) \triangleq \pmat{ \EXP{\Phi_1(x,\xi)} \\
							\vdots \\
						  \EXP{\Phi_N(x,\xi)}},  
\end{align}
$\Phi_i:  {\cal D}_i \times \Real^d \to
\Real^{n_i}$, ${\cal D}_i \subseteq \Real^{n_i}$, $X$ is a closed and convex
set, ${\cal D}_i$ is an open set in $\Real^{n_i}$ and $\sum_{i=1}^N n_i
= n$. Furthermore, $\xi: \Omega\to \Real^d$ is a random variable, where
$\Omega$ denotes the associated sample space and $\EXP{\cdot}$ denotes
the expectation with respect to $\xi$.

Variational inequality problems assume relevance in capturing the
solution sets of convex optimization and equilibrium problems~\cite{facchinei02finite}. Their
Cartesian specializations arise from specifying the set $X$ as a Cartesian
product, i.e., $X \triangleq \prod_{i=1}^N X_i$.
Such problems arise in the modeling of multi-agent
decision-making problems such as rate allocation problems in communication
networks~\cite{Kelly98,Srikant04,ShakSrikant07}, 
noncooperative Nash games in communication
networks~\cite{alpcan02game,alpcan03distributed,yin09nash2}, 
competitive interactions in cognitive radio
networks~\cite{aldo1,aldo2,scutari10monotone,koshal11single2}, and
strategic behavior in power markets~\cite{KShKim11,KShKim12,SIGlynn11}. Our
interest lies in regimes complicated by uncertainty, which could arise
as a result of agents facing expectation-based objectives that
do not have tractable analytical forms. Naturally, 
the Cartesian stochastic variational inequality problem framework
represents an expansive model for capturing a range of such problems. 

Two broad avenues exist for
solving such a class of problems. Of these, the first approach, referred
to as the sample-average approximation (SAA) method. In adopting this approach, one
uses a set of $M$ samples $\{\xi_1, \hdots,
	\xi_M\}$ and considers the sample-average problem where an expected mapping 
	$\EXP{\Phi(x,\xi)}$ is replaced by the sample-average
	${\sum_{j=1}^M \Phi(x,\xi^j)}/{M}$. The resulting 
problem is deterministic and its solution provides an estimator for the
solution of the true problem. The asymptotic behavior of these
estimators has been studied extensively in the context of stochastic
optimization and variational
problems~\cite{linderoth02empirical,shap03sampling}. The other
approach, referred to as stochastic approximation, also has a long
tradition. First proposed by Robbins and Monro~\cite{robbins51sa} for
root-finding problems and by Ermoliev for stochastic
programs~\cite{Ermoliev76,Ermoliev83,Ermoliev88}, significant effort has
been applied towards theoretical and algorithmic examination of such
schemes (cf.~\cite{Borkar08,Kush03,Spall03}). Yet, there has been markedly
little on the application of such techniques to solution of stochastic
variational inequalities, exceptions
being~\cite{Houyuan08,koshal10single}. Standard stochastic
approximation schemes provide little guidance regarding the choice of a
steplength sequence, {denoted by $\{\gamma_k\}$}, apart from requiring
that the sequence satisfies $$ \sum_{k=0}^{\infty} \gamma_k = \infty
\quad\mbox{ and }\quad \sum_{k=0}^{\infty} \gamma_k^2 < \infty.$$

The behavior of stochastic approximation schemes is closely tied to the
choice of steplength sequences. Generally, there have been two avenues
traversed in choosing steplengths: (i) {\em Deterministic steplength
	sequences:} Spall \cite[Ch.~4, pg.~113]{Spall03} considered
	diverse choices of the form
$\gamma_k=\frac{\beta}{(k+1+a)^\alpha}$, where 
$\beta>0$, $0<\alpha \leq 1$, and $a \geq 0$ is
a stability constant. In related work in the context of approximate
dynamic programming, Powell~\cite{Powell10} examined several
deterministic update rules. However, much of these results are not
provided with convergence theory. (ii) {\em Stochastic steplength
	sequences:} An alternative to a deterministic rule is a stochastic
	scheme that updates steplengths based on observed data. Of note is
	recent work by George et al. \cite{George06} where an
	adaptive stepsize rule is proposed that minimizes the mean squared
	error. In a similar vein, Cicek et al. \cite{Zeevi11}
	develop an adaptive Kiefer-Wolfowitz SA algorithm and derive general upper
	bounds on its mean-squared error.

Before proceeding, we note the relationship of the present work to three
specific references. In ~\cite{koshal10single}, Cartesian
stochastic variational inequality problems with Lipschitzian mappings were
considered with a focus towards integrating Tikhonov and prox-based
regularization techniques with standard stochastic gradient methods.
However, the steplength sequences were ``non-adaptive'' since the
choices did not adapt to problem parameters. Two problem-specific
adaptive rules were developed in our earlier work on stochastic convex
programming. Additionally, local smoothing techniques were examined for
addressing the lack of smoothness. Of these, the first, referred to as
the {\em recursive steplength} SA scheme, forms the inspiration for a
generalization pursued in the current work. Finally, in \cite{Farzad2},
we extended this recursive rule to accommodate stochastic variational
inequality problems. Note that the qualifier ``adaptive'' implies that
the steplength rule adapts to problem parameters such as Lipschitz
constant, monotonicity constant and the diameter of the set. In this
paper, our goal lies in developing a distributed adaptive stochastic
approximation scheme (DASA) that can accommodate networked
multi-agent implementations and cope with non-Lipschitzian mappings. More
specifically, the main contributions of this paper are as
follows: 
\vspace{-0.15in}
\paragraph{(i) DASA schemes for Lipschitzian CSVIs:} We begin with a
simple extension of the adaptive stepength rule presented in
\cite{Farzad1} to the variational regime under a Lipschitzian
requirement on the map. Yet, implementing this rule in a centralized
regime is challenging and this motivates the need for distributed
counterparts that can be employed on Cartesian problems. Such a
distributed rule is developed and produces sequences of iterates that
are guaranteed to converge to the solution in almost-sure sense.
\vspace{-0.15in}
\paragraph{(ii) DASA schemes for non-Lipschitzian CSVIs:} Our second
goal lies in addressing the absence or unavailability of a Lipschitz
constant by leveraging locally randomized smoothing techniques, again
inspired by our efforts to solve nonsmooth stochastic optimization
problems~\cite{Farzad1}. In this part of the paper, we generalize this
natively centralized scheme for optimization problems to a distributed
version that can cope with Cartesian stochastic variational inequality
problems. 

The remainder of this paper is organized as follows. 
In Section~\ref{sec:formulation}, we provide a canonical
formulation for the problem of interest and motivate this formulation
through two sets of examples. An adaptive steplength SA
scheme for stochastic variational inequality problems with Lipschitzian
mappings and its distributed generalization are provided in Section~\ref{sec:convergence-Lip}. 
By leveraging a locally randomized smoothing technique, in Section~\ref{sec:convergence-no-Lip}, we
extend these schemes to a regime where Lipschitzian assumptions do not
hold. Finally, the paper concludes with some preliminary numerics in
Section~\ref{sec:numerics}. 

\textbf{Notation:} Throughout this paper, a vector $x$ is assumed to be
a column vector. We write $x^T$ to denote the transpose of a vector $x$,
$\|x\|$ {to denote} the Euclidean vector norm, i.e.,
$\|x\|=\sqrt{x^Tx}$, {$\|x\|_1$ {to denote} the $1$-norm, i.e.,
$\|x\|_1=\sum_{i=1}^n|x_i|$ for $x \in \mathbb{R}^n$, and
$\|x\|_\infty$ {to denote} the infinity vector norm, i.e.,
$\|x\|_\infty=\max_{i=1,\ldots,n}|x_i|$ for $x \in \mathbb{R}^n$}.
We use $\Pi_X(x)$ to denote the Euclidean projection of a vector $x$ on
a set $X$, i.e., $\|x-\Pi_X(x)\|=\min_{y \in X}\|x-y\|$. {For a convex
function $f$ with domain dom$f$, a vector $g$ is a \textit{subgradient} of $\bar x \in \hbox{dom}f$ if $f(\bar
x) +g^T(x-\bar x) \leq f(x)$ holds} for all $x \in \hbox{dom}f.$ The set
of all subgradients of $f$ at $\bar x$ is denoted by $\partial f(\bar
		x)$. We write \textit{a.s.} as the abbreviation for ``almost
surely''. {We use $\Prob(A)$ to denote the probability of an event $A$
	and} $\EXP{z}$ to denote the expectation of a random variable~$z$.
	The \texttt{Matlab} notation $(u_1;u_2;u_3)$ refers to a column
	vector with components $u_1$, $u_2$ and $u_3$, respectively.

\section{Formulation and source problems}\label{sec:formulation}
In Section~\ref{sec:2.1}, we formulate the Cartesian stochastic variational
inequality (CSVI) problem and outline
the stochastic approximation algorithmic framework. A motivation
for studying CSVIs is provided through two examples
in Section~\ref{sec:2.2}, while a review of the
main assumptions is given in Section~\ref{sec:2.3}.

\subsection{Problem formulation and algorithm outline}\label{sec:2.1}
Given a set $X \subseteq \Real^n$ and a mapping $F:X \to \Real^n$, the 
variational inequality problem, denoted {by} VI$(X,F)$, requires determining a vector $x^* \in X$ such that 
$
(x-x^*)^TF(x^*) \geq 0 
$ holds for all $x \in X$. When the underlying set $X$ is given by a Cartesian product, as
	articulated by the definition $X \triangleq \prod_{i=1}^N X_i$, where
	$X_i \subseteq \Real^{n_i}$, then
		the associated variational inequality is qualified as a {\em
			Cartesian} variational inequality problem. 
			Now suppose that ${x^*=(x_1^*; x_2^*;\ldots; x_N^*)} \in X$ 
satisfies the following system of inequalities:
 \begin{align}\label{eqn:CVI}
(x_i-x_i^*)^T\EXP{\Phi_i(x^*,\xi_i)} \geq 0 \qquad \hbox{ for all $x_i \in
	X_i$ and all $i=1,\ldots,N$},
\end{align} 
where $\xi_i:{\Omega_i} \rightarrow R^{d_i}$ is a random vector with some
	 probability distribution for $i = 1, \hdots, N$. 
Naturally, problem (\ref{eqn:CVI}) may be reduced to VI$(X,F)$ by noting that $F$
may be defined as in~\eqref{def-F}, 
 where $n=\sum_{i=1}^N n_i$ and 
$F:X\to\Real^{n}$. Then, VI$(X,F)$ is a stochastic variational
inequality problem on the Cartesian product of {the} sets $X_i$
with a solution {$x^*=(x_1^*; x_2^*;\ldots; x_N^*)$}.

Much of the interest in this paper pertains to the development of
stochastic approximation schemes for 
VI$(X,F)$ when the components the map $F$ is defined by
\eqref{def-F}. For such a problem, we consider the following distributed stochastic
approximation scheme:
\begin{align}
\begin{aligned}
x_{{k+1},i} & =\Pi_{X_i}\left(x_{k,i}-\g_{k,i} ( F_i(x_k)+w_{k,i})\right),\cr 
w_{k,i} &\triangleq  \sfy{\Phi_i(x_{k}},\xi_{k,i})-F_i(x_k),
\end{aligned}\label{eqn:algorithm_different}
\end{align}
for all $k\ge 0$ and $i=1,\dots,N$, where {$F_i(x)\triangleq \EXP{\Phi_i(x,\xi_{i})}$ for $i=1,\ldots,N$,}
$\g_{k,i} >0$ is the stepsize {for }the $i$th {index} at iteration $k$,
{$x_{k,i}$ denotes the solution for the $i$-th index at iteration $k$, and
$x_k =(x_{k,1};\  x_{k,2};\ \ldots;\ x_{k,N})$.} Moreover, $x_0 \in X$ is a random
initial vector independent of {any other random variables in the scheme and such}
that $\EXP{\|x_0\|^2}<\infty$. 

\subsection{Motivating examples}\label{sec:2.2}
We consider two problems that can be addressed by Cartesian stochastic
variational inequality framework.
\begin{example}[Networked stochastic Nash-Cournot game]
\label{ex:st-nc}
\rm  A classical example of a Nash game is a networked Nash-Cournot
game~\cite{metzler03nash-cournot,kannan10online}.
Suppose a collection of $N$ firms compete over a network of $M$ nodes
wherein the production and sales for firm $i$ at node $j$ are denoted by
$g_{ij}$ and $s_{ij}$, respectively. Suppose firm $i$'s cost of production at node
$j$ is denoted by the uncertain cost function $c_{ij}(g_{ij},\xi)$.
Furthermore, goods sold by firm $i$ at node $j$ fetch a random revenue
defined by $p_j(\bar s_j,\xi)s_{ij}$ where $p_j(\bar s_j,\xi)$
denotes the uncertain sales price
at node $j$ and $\bar s_j = \sum_{i=1}^N s_{ij}$ denotes the aggregate
sales at node $j$. Finally, firm $i$'s production at node $j$ is capacitated
by $\mathrm{cap}_{ij}$ and its optimization problem is given by the
following\footnote{Note that the transportation costs are assumed to be
	zero.}:
\begin{align*}
\displaystyle \mbox{minimize} & \qquad  {\EXP{f_i(x,\xi)}} \\
\mbox{subject to} & \qquad x_i \in X_i,
\end{align*}
where \sfy{$x=(x_1;\ldots; x_{N})$} with $x_i=(g_i; s_i)$, \sfy{$g_i=(g_{i1};\ldots;g_{iM})$, 
$s_i=(s_{i1};\ldots;s_{iM})$}, and
\begin{align*}
f_i(x,\xi) & \triangleq \sum_{j =1}^M \left( c_{ij}
		(g_{ij},\xi) - p_j(\bar s_j,\xi)
		s_{ij}\right), 
		\\ 
X_i & \triangleq  \left\{
(g_i,s_i) \mid \sum_{j=1}^M
g_{ij}   = \sum_{j=1}^M s_{ij},  \quad g_{ij},\,s_{ij}  \geq 0,\quad g_{ij} \leq \mathrm{cap}_{ij},\  j = 1,
	\hdots, M\right\}. \hfill \qed
\end{align*}
\end{example}
Under the validity of the interchange between the expectation and the
derivative operator, the resulting equilibrium conditions of this stochastic Nash-Cournot
game are compactly captured by the variational inequality VI$(X,F)$
where 
$ X \triangleq \prod_{i=1}^N X_i$ and \sfy{$F(x) = (F_1(x);\ldots; F_N(x))$}
with $F_i(x)=\EXP{\nabla_{x_i} f_i(x,\xi)}$.
\begin{example}[Stochastic composite minimization problem]
\rm Consider a generalized min-max optimization problem given by 
\begin{align}\label{gen-minmax}
\displaystyle \mbox{minimize} & \qquad 
\Psi(\psi_1(x), \hdots, \psi_m(x)) \\
\mbox{subject to} & \qquad x \in X \triangleq \prod_{i=1}^N X_i,
\end{align}
where  
$\Psi(u_1, \hdots, u_m)$ is defined as 
$$ \Psi(u_1, \hdots, u_m) \triangleq  \max_{y \in {\mathcal Y}} \left\{
\sum_{i=1}^m u_i^T(A_i y + b_i) - \beta(y)\right\},$$ 
while ${\psi_i(x)} \triangleq \EXP{{\phi_i}(x,\xi)}$, 
$\nabla_{x_j} \psi_i(x) = \EXP{ H_{ji}(x,\xi)}$ for $i = 1, \hdots, m,$ 
and ${\beta(y)}$ is a Lipschitz continuous convex function of $y$. 
\hfill \qed\end{example}
Under the assumption that the derivative and the {e}xpectation operator
can be interchanged, it can be seen that the solution to this optimization problem can be obtained by solving a
Cartesian stochastic variational inequality problem VI$(X\times {\mathcal Y}, F)$ where 
$$ F(x,y) \triangleq \displaystyle \pmat{ \sum_{i=1}^m \nabla_{x_1} {\psi_i(x)} (A_i y
		+b_i) \\
		\vdots \\\sum_{i=1}^m \nabla_{x_N} {\psi_i(x)} (A_i y
		+b_i) \\
		-\sum_{i=1}^m A_i^T \psi_i(x) + \nabla_y \beta(y)} 
		= \pmat{	\mathsf{E} \left[\sum_{i=1}^m H_{1i}(x,\xi) (A_i y
		+b_i)\right] \\
		\vdots \\ \mathsf{E} \left[\sum_{i=1}^m  H_{Ni}(x,\xi) (A_i y
		+b_i)\right]\\
		\mathsf{E} \left[-\sum_{i=1}^m A_i^T \phi_i(x,\xi) + \nabla_y
		\beta(y) \right]}.$$
Note that the specification that ${\psi_i(x)}$ and its Jacobian are
expectation-valued may be a consequence of not having access to
noise-free evaluations of either object. In particular, one only has
access to evaluations $\phi_i(x,\xi)$ and Jacobian evaluations given
by $H_{ji}(x,\xi) = \nabla_{x_j} {\phi_i(x,\xi)}$.

\subsection{Assumptions}\label{sec:2.3}
Our interest lies in the development of distributed
stochastic approximation schemes for Cartesian stochastic variational
inequality problems as espoused
		by~\eqref{eqn:algorithm_different} and the associated global convergence
theory in regimes where the mappings
are single-valued mappings that are not necessarily Lipschitz continuous. 
We let 
\[X=\prod_{i=1}^N X_i,\]
and make the following assumptions on the set $X$ and the mapping $F$.
\begin{assumption}\label{assum:different_main}
Assume the following:
\begin{enumerate} 
\item[(a)] The set $X_i \subseteq \Real^{n_i}$ is closed and convex for
$i = 1, \hdots, N$.
\item[(b)] The mapping $F(x)$ is a single-valued Lipschitz continuous
over the set $X$ with a constant $L$.
\item[(c)] The mapping $F(x)$ is strongly monotone with a constant $\eta >0$: 
$$ (F(x)-F(y))^T(x-y) \geq \eta \|x-y\|^2\qquad \hbox{for all }x,y \in {X}.$$
\end{enumerate}
\end{assumption}
Since $F$ is strongly monotone, 
the existence and uniqueness of the solution to VI$(X,F)$ is guaranteed by 
Theorem 2.3.3 of~\cite{facchinei02finite}. 
We let $x^*$ denote the solution of VI$(X,F)$. 

Regarding the method in~\eqref{eqn:algorithm_different}, we let
$\sF_k$ denote the history of the method up to time $k$, i.e., 
$\sF_k=\{x_0,\xi_0,\xi_1,\ldots,\xi_{k-1}\}$ for $k\ge 1$ and $\sF_0=\{x_0\}$,
where $\xi_k=(\xi_{k,1}\ ;  \xi_{k,2}\ ; \ldots \ ;\xi_{k,N})$. In terms of this definition, we note that 
\[\EXP{w_{k,i} \mid \sF_k} =\EXP{\sfy{\Phi_i(x_{k}},\xi_{k,i}) \mid\sF_k}-F_i(x_k)=0\qquad
\hbox{for all $k\ge0$ and all $i$}.\]
We impose some further conditions on the stochastic errors $w_{k,i}$ of the algorithm, as follows.
\begin{assumption}\label{assum:w_k_bound} 
The errors {$w_k=(w_{k,1}; w_{k,2}; \ldots; w_{k,N})$} are such that 
for some (deterministic) $\nu >0$,
\[\EXP{\|w_k\|^2 \mid \sF_k} \le \nu^2 \qquad \hbox{\textit{a.s.} for all $k\ge0$}.\]  
\end{assumption}
We use the following Lemma in establishing the convergence of
method~(\ref{eqn:algorithm_different}) and its extensions. This result
may be found in~\cite{Polyak87} (cf.~Lemma 10, page 49).  
\begin{lemma}\label{lemma:probabilistic_bound_polyak}
Let $\{v_k\}$ be a sequence of nonnegative random variables, where 
$\EXP{v_0} < \infty$, and let $\{\a_k\}$ and $\{\mu_k\}$
be deterministic scalar sequences such that:
\[\EXP{v_{k+1}|v_0,\ldots, v_k} \leq (1-\alpha_k)v_k+\mu_k
\qquad a.s. \ \hbox{for all }k\ge0,\]
\[0 \leq \alpha_k \leq 1, \quad\ \mu_k \geq 0, \qquad \hbox{for all }k\ge0,\]
\begin{align*}
\sum_{k=0}^\infty \alpha_k =\infty, 
\quad\ \sum_{k=0}^\infty \mu_k < \infty, 
\quad\ \lim_{k\to\infty}\,\frac{\mu_k}{\alpha_k} = 0. 
\end{align*}
Then, $v_k \rightarrow 0$ almost surely{, 
$\lim_{k\to\infty}\EXP{v_k}= 0$, and for any $\epsilon >0$ and for all $k>0$,
\[\Prob\left(\{v_j \leq \e \hbox{ for all } j \geq k\}\right)
\geq 1 - \frac{1}{\e}\left(\EXP{v_k}+\sum_{i=k}^\infty \beta_i\right)
.\]}
\end{lemma}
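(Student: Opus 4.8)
The plan is to derive all three assertions from two standard devices: a deterministic ``almost-contraction'' estimate applied to the sequence of expectations, and the convergence theory together with the maximal inequality for nonnegative supermartingales. Throughout set $\mathcal{G}_k=\sigma(v_0,\dots,v_k)$, so that the hypothesis reads $\EXP{v_{k+1}\mid\mathcal{G}_k}\le(1-\alpha_k)v_k+\mu_k$ a.s. As a preliminary step I would take expectations and use $0\le\alpha_k\le1$ to get $\EXP{v_{k+1}}\le\EXP{v_k}+\mu_k$, so that by induction $\EXP{v_k}\le\EXP{v_0}+\sum_{i=0}^\infty\mu_i<\infty$ for every $k$; this finiteness is what legitimizes the subsequent manipulations.

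Next I would establish $\EXP{v_k}\to0$. Writing $u_k:=\EXP{v_k}\ge0$, the inequality above gives $u_{k+1}\le(1-\alpha_k)u_k+\mu_k$, and I would run the classical Chung-type argument: fix $\epsilon>0$, choose $N$ with $\mu_k\le\epsilon\alpha_k$ for $k\ge N$ (possible since $\mu_k/\alpha_k\to0$), and subtract $\epsilon$ to obtain $u_{k+1}-\epsilon\le(1-\alpha_k)(u_k-\epsilon)$ for $k\ge N$. Iterating, either $u_m\le\epsilon$ for some $m\ge N$, in which case $u_j\le\epsilon$ for all $j\ge m$ because each factor $1-\alpha_j$ is nonnegative, or else $u_k-\epsilon\le(u_N-\epsilon)\prod_{j=N}^{k-1}(1-\alpha_j)$, and the product tends to $0$ since $\sum_j\alpha_j=\infty$. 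In either case $\limsup_k u_k\le\epsilon$, and letting $\epsilon\downarrow0$ together with $u_k\ge0$ yields $u_k\to0$.

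For the almost sure convergence I would pass to the shifted process $S_k:=v_k+\sum_{i=k}^\infty\mu_i$. It is nonnegative, $\mathcal{G}_k$-measurable, integrable by the preliminary bound, and satisfies $\EXP{S_{k+1}\mid\mathcal{G}_k}=\EXP{v_{k+1}\mid\mathcal{G}_k}+\sum_{i=k+1}^\infty\mu_i\le S_k-\alpha_kv_k\le S_k$, so $\{S_k\}$ is a nonnegative supermartingale. By the supermartingale convergence theorem it converges a.s. to some $S_\infty\ge0$, and since $\sum_{i=k}^\infty\mu_i\to0$ this forces $v_k\to S_\infty$ a.s. The one delicate point — and the step I expect to be the main obstacle — is to show $S_\infty=0$; this is where all three hypotheses must be used together. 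The cleanest route is to combine $v_k\to S_\infty$ a.s. with the already-proved $\EXP{v_k}\to0$: Fatou's lemma gives $\EXP{S_\infty}\le\liminf_k\EXP{v_k}=0$, and $S_\infty\ge0$ then forces $S_\infty=0$ a.s. (Alternatively one can telescope $\sum_k\EXP{\alpha_kv_k}\le\EXP{S_0}<\infty$, deduce $\sum_k\alpha_kv_k<\infty$ a.s., and invoke $\sum_k\alpha_k=\infty$ to get $\liminf_kv_k=0$.) Hence $v_k\to0$ a.s.

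Finally, the probabilistic bound follows from Ville's maximal inequality for the same nonnegative supermartingale $\{S_j\}_{j\ge k}$, namely $\Prob\left(\sup_{j\ge k}S_j>\epsilon\right)\le\EXP{S_k}/\epsilon$, the infinite supremum being handled by applying the finite-horizon Doob inequality on $\{k\le j\le n\}$ and letting $n\to\infty$. Since $v_j\le S_j$, the event $\{\,v_j\le\epsilon\ \text{for all}\ j\ge k\,\}$ contains $\{\sup_{j\ge k}S_j\le\epsilon\}$, so taking complements and substituting $\EXP{S_k}=\EXP{v_k}+\sum_{i=k}^\infty\mu_i$ gives exactly the claimed inequality (reading the $\beta_i$ in the statement as $\mu_i$). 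Apart from the identification $S_\infty=0$ noted above, the remaining work is bookkeeping: checking integrability before invoking martingale theory, choosing the shift $S_k$ so that it is an honest supermartingale, and minding signs and the nonnegativity of the factors $1-\alpha_j$ in the deterministic step.
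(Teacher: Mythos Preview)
Your proof is correct and complete. The paper, however, does not provide its own proof of this lemma: it simply cites the result as Lemma~10 on page~49 of Polyak's 1987 book and moves on. So there is no proof in the paper to compare against; your argument is a self-contained justification of a statement the authors take from the literature.

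For what it is worth, your approach is the standard one and matches how such results are typically established: the deterministic Chung-type recursion handles $\EXP{v_k}\to0$, the shifted process $S_k=v_k+\sum_{i\ge k}\mu_i$ is the right nonnegative supermartingale, Fatou identifies the almost-sure limit as zero, and Ville's maximal inequality delivers the tail bound. Your remark that the $\beta_i$ in the displayed bound should read $\mu_i$ is also correct; this is a typographical slip in the paper's statement.
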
 

\section{Distributed adaptive SA schemes for Lipschitzian
	mappings}\label{sec:convergence-Lip}
In this section, we restrict our attention to settings where the mapping $F(x)$ is
a single-valued Lipschitzian map. In Section~\ref{sec:asa}, we begin by developing an
adaptive steplength rule for deriving steplength sequences from problem
parameters such as monotonicity constant, Lipschitz constant etc., where
the qualifier adaptive implies that the steplength choices ``adapt'' or
are ``self-tuned'' to problem parameters. Unfortunately, in distributed
regimes, such a rule requires prescription by a central coordinator, a
relatively challenging task in multi-agent regimes. This motivates the
development of a distributed counterpart of the aforementioned adaptive
rule and provide convergence theory for such a generalization in
Section~\ref{sec:dasa}. 

\subsection{An adaptive steplength SA (ASA) scheme} \label{sec:asa}
Stochastic approximation algorithms 
require stepsize sequences to be square summable
but not summable. These algorithms provide little advice regarding the choice of
such sequences. One of the most common choices has been the harmonic
steplength rule which takes the form of $\gamma_k=\frac{\theta}{k}$ where $\theta>0$ is a
constant. Although, this choice guarantees almost-sure convergence, it does
not leverage problem parameters. Numerically, it has been observed that
such choices can perform quite poorly in practice. Motivated by this
shortcoming, we present a steplength scheme for a centralized variant of
algorithm~\eqref{eqn:algorithm_different}:
\begin{align}
\begin{aligned}
x_{{k+1}} & =\Pi_{X}\left(x_{k}-\g_{k} ( F(x_k)+w_{k})\right),\cr 
w_{k} &\triangleq  \sfy{\Phi(x_{k}},\xi_{k})-F(x_k),
\end{aligned}\label{eqn:algorithm_centralized}
\end{align}
for $k \ge 0$. The proposed scheme derives a rule for updating steplength
sequences that adapts to problem parameters while guaranteeing
almost-sure convergence of $x_k$ to the unique solution of VI$(X,F)$. 

A key challenge in practical implementations of stochastic approximation
lies in choosing an appropriate diminishing steplength sequence
$\{\g_k\}$. In \cite{Farzad1}, we developed a rule for selecting such a
sequence in a convex stochastic optimization regime by leveraging three
parameters: (i) Lipschitz constant of the gradients; (ii) strong
convexity constant; and (ii) diameter of the set $X$. Along similar
directions, such a rule is constructed for strongly monotone stochastic variational
inequality problems and the results in this subsection bear significant
similarity to those presented in \cite{Farzad1} with some
key distinctions. First, these results are presented for 
strongly monotone stochastic variational inequality problems and second,
		 co-coercivity of the mappings is not assumed, leading to a
tighter requirement on the choice of steplengths. 
\begin{lemma}\label{lemma:error_bound_cent}
Consider algorithm~\eqref{eqn:algorithm_centralized}, and let
	Assumptions~\ref{assum:different_main} and~\ref{assum:w_k_bound}
	hold. Then, the following relation holds almost surely for all $k
	\ge 0$:
\begin{align} \label{rate_cent_nu}
\EXP{\|x_{k+1}-x^*\|^2\mid\sF_k} 
\leq 	 (1-2\eta\gamma_k+L^2\gamma_k^2)\|x_k-x^*\|^2+{\gamma_k^2\nu^2}.
\end{align} 
\end{lemma}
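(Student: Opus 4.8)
The plan is to start from the projection update $x_{k+1}=\Pi_X\big(x_k-\gamma_k(F(x_k)+w_k)\big)$ and exploit the nonexpansivity of the Euclidean projection together with the fact that $x^*=\Pi_X\big(x^*-\gamma_k F(x^*)\big)$, since $x^*$ solves VI$(X,F)$. Writing $e_k \triangleq x_k-x^*$, nonexpansivity gives
\begin{align*}
\|x_{k+1}-x^*\|^2 \leq \big\| e_k - \gamma_k\big(F(x_k)-F(x^*)\big) - \gamma_k w_k\big\|^2.
\end{align*}
First I would expand this square into three groups of terms: the ``deterministic'' part $\|e_k-\gamma_k(F(x_k)-F(x^*))\|^2$, the cross term $-2\gamma_k\big(e_k-\gamma_k(F(x_k)-F(x^*))\big)^T w_k$, and the noise part $\gamma_k^2\|w_k\|^2$.

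Next I would take the conditional expectation $\EXP{\cdot\mid\sF_k}$. Since $x_k$ is $\sF_k$-measurable, the deterministic part passes through unchanged. The cross term vanishes because $\EXP{w_k\mid\sF_k}=0$, as noted just before Assumption~\ref{assum:w_k_bound}. The noise part is bounded by $\gamma_k^2\nu^2$ using Assumption~\ref{assum:w_k_bound}. It remains to bound the deterministic part: expanding,
\begin{align*}
\big\| e_k - \gamma_k\big(F(x_k)-F(x^*)\big)\big\|^2 = \|e_k\|^2 - 2\gamma_k \big(F(x_k)-F(x^*)\big)^T e_k + \gamma_k^2\big\|F(x_k)-F(x^*)\big\|^2.
\end{align*}
Here I would invoke strong monotonicity (Assumption~\ref{assum:different_main}(c)) to get $\big(F(x_k)-F(x^*)\big)^T e_k \geq \eta\|e_k\|^2$, and Lipschitz continuity (Assumption~\ref{assum:different_main}(b)) to get $\|F(x_k)-F(x^*)\|^2 \leq L^2\|e_k\|^2$. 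Combining these yields the deterministic part is at most $(1-2\eta\gamma_k+L^2\gamma_k^2)\|e_k\|^2$, and assembling everything gives exactly~\eqref{rate_cent_nu}.

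The argument is essentially routine; the only subtlety — and the point the authors flag as the ``key distinction'' from~\cite{Farzad1} — is that one does \emph{not} assume co-coercivity of $F$, so the term $\gamma_k^2\|F(x_k)-F(x^*)\|^2$ must be controlled solely by Lipschitz continuity rather than being absorbed into the monotonicity term. This is why the coefficient is $1-2\eta\gamma_k+L^2\gamma_k^2$ rather than something tighter, and it is what forces a more restrictive steplength condition later. One should also be slightly careful that the nonexpansivity step is valid: $\Pi_X$ is firmly nonexpansive (in particular nonexpansive) on all of $\Real^n$ since $X$ is closed and convex by Assumption~\ref{assum:different_main}(a), and both $x_k-\gamma_k(F(x_k)+w_k)$ and $x^*-\gamma_k F(x^*)$ are legitimate arguments; the almost-sure qualifier just propagates that of Assumption~\ref{assum:w_k_bound}.
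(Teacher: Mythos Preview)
Your proof is correct and follows essentially the same route as the paper: nonexpansivity of the projection applied to $x_{k+1}$ and $x^*=\Pi_X(x^*-\gamma_kF(x^*))$, expansion of the squared norm, elimination of the $w_k$-cross terms via $\EXP{w_k\mid\sF_k}=0$, and then strong monotonicity, Lipschitz continuity, and the noise bound to finish. The only cosmetic difference is that the paper takes the conditional expectation before fully expanding, whereas you expand first and then condition; the steps and inequalities used are identical.
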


\begin{proof}
By the definition of algorithm~\eqref{eqn:algorithm_centralized} and the non-expansiveness property of the
projection operator, we have for all $k\ge0$,
\begin{align*}
 \|x_{k+1}-x^*\|^2  
 &=\|\Pi_{X}(x_{k}-\g_{k}(F(x_k)+w_{k}))  -\Pi_{X}(x^*-\g_{k} F(x^*))\|^2 \\
 & \le \|x_{k}-x^*-\g_{k}(F(x_k)+w_{k}-F(x^*))\|^2.
\end{align*}	
Taking expectations conditioned on the past, and by employing 
$\EXP{w_{k}\mid \sF_k}=0$, we have
\begin{align*}
 \EXP{\|x_{k+1}-x^*\|^2\mid\sF_k} &\le \|x_{k}-x^*\|^2
 +\gamma_{k}^2\|F(x_k)-F(x^*)\|^2
 +\g_{k}^2\EXP{\|w_{k}\|^2\mid \sF_k}\\ 
&-2\gamma_{k} (x_{k}-x^*)^T(F(x_k)-F(x^*)) \\
& \leq (1-2\eta \gamma_k + \gamma_k^2 L^2) \|x_k-x^*\|^2 + \g_k^2 \nu^2,
\end{align*}
where the second inequality is a consequence of the strong monotonicity
and Lipschitz continuity
of $F(x)$ over $X$ as well as the boundedness of $\EXP{\|w_{k}\|^2\mid
	\sF_k}.$
\end{proof}

The upper bound~\eqref{rate_cent_nu} can be used to construct an adaptive stepsize
rule. Note that inequality (\ref{rate_cent_nu}) holds for any $\gamma_k >0$.
When the stepsize is further restricted so that $0 < \g_k
\le\frac{\eta}{L^2}$, we have
\[1- \g_k (2\eta-\g_k L^2) \leq 1-\eta  \g_k.\]
Thus, for $0 < \g_k \le\frac{\eta}{L^2}$ and by taking expectations, inequality
(\ref{rate_cent_nu}) reduces to
 \begin{equation}\label{rate_nu_est}
\EXP{\|x_{k+1}-x^*\|^2} 
\le (1-\eta \g_k)\EXP{\|x_k-x^*\|^2}+ \g_k^2\nu^2\qquad\hbox{for all }k \geq 0.
\end{equation}
We begin by viewing 
$\EXP{\|x_{k+1}-x^*\|^2}$ as an error $e_{k+1}$ arising
from employing the stepsize sequence $\g_0,\g_1,\ldots, \g_k$.
Furthermore, the worst case error arises when~\eqref{rate_nu_est} holds
as an equality and satisfies the following recursive relation:
\[e_{k+1}(\g_0,\ldots,\g_k) = (1-\eta \gamma_k) e_k(\g_0,\ldots,\g_{k-1}) 
+ \gamma_k^2 \nu^2.\]
Motivated by this relationship, our interest lies in examining whether 
the stepsizes
$\g_0,\g_1\ldots,\g_k$ can be chosen so as to minimize the error
$e_{k}$. {Our goal lies in constructing a stepsize scheme that allows
	for claiming the almost sure convergence of the sequence $\{x_k\}$
		produced by algorithm (\ref{eqn:algorithm_centralized}) to the unique solution $x^*$ of VI$(X,F)$.  We formalize this approach by defining real-valued error functions $e_k(\g_0,\ldots,\g_{k-1})$ as follows:
\begin{align}\label{def:e_k}
  e_{k}(\g_0,\ldots,\g_{k-1})
& \triangleq (1-\eta \g_{k-1})e_{k-1}(\g_0,\ldots,\g_{k-2}) +\g_{k-1}^2\nu^2
\qquad\hbox{for $k\ge 1$},
\end{align}
where $e_0$ is a positive scalar, $\eta$ is the strong monotonicity
constant and $\nu^2$ is an upper bound for the second moments of the error 
norms $\|w_k\|$. We consider a choice of $\{\gamma_0, \gamma_1, \hdots,
	  \g_{k-1}\}$ based on minimizing an upper bound on the mean-squared
		  error, namely $e(\g_0,\g_1, \hdots, \g_{k-1})$, as captured by
		 the following optimization problem:
\begin{equation*}
\begin{split}
&\begin{array}{ll}
  \hbox{minimize } & e_{k}(\gamma_0,\ldots,	\gamma_{k-1})\cr
  \hbox{subject to } & 0<\gamma_j \leq \frac{\eta}{L^2} \hbox{ for all }j=0,\ldots, k-1.
				 \end{array}
\end{split}\end{equation*}
To ensure convergence in an almost-sure sense, the sequence $\{\g_k\}$
	needs to satisfy $\sum_{j=0}^\infty \g_j =\infty$ and
	$\sum_{j=0}^\infty \g_j^2 <\infty$. As the next two propositions
	show, these can indeed be achieved. In fact, the error $e_{k+1}$ at
	the next iteration can also be minimized by selecting $\g_{k}$ as a
	function of only the most recent stepsize $\g_{k-1}$.} In what
	follows, we consider the sequence $\{\g^*_k\}$ given by 
\begin{align}
& \gamma_0^*=\frac{\eta}{2\nu^2}\,e_0\label{eqn:optimal_self_adaptive_2}
\\
& \gamma_{k}^*=\gamma_{k-1}^*\left(1-\frac{\eta}{2}\gamma_{k-1}^*\right) 
\quad \hbox{for all }k\ge 1.\label{eqn:optimal_self_adaptive}
\end{align}
We provide a result showing that the stepsizes $\g_i$,
$i=0,\ldots,k-1,$ minimize 
$e_k$ over $(0,\frac{\eta}{L^2}]^{k}$, where $L$ is the Lipschitz
		constant associated with $F(x)$ over $X$. 
		
\begin{proposition}[An adaptive steplength SA (ASA) scheme]\label{prop:adap_cent_results}
Let the error function $e_k(\g_0,\ldots,\g_{k-1})$ be defined as in~\eqref{def:e_k},
where {$e_0\geq 0$} is such that {$\nu \geq L\,\sqrt{\frac{e_0}{2}}$},
where $L$ is the Lipschitz constant of~$F$.
Let the sequence $\{\g^*_k\}$ be given by 
\eqref{eqn:optimal_self_adaptive_2}--\eqref{eqn:optimal_self_adaptive}.
Then, the following hold:
\begin{itemize}
\item [(a)] {For all $k\ge0$, the error $e_k$ satisfies 
$e_k(\g_0^*,\ldots,\g_{k-1}^*) = \frac{2\nu^2}{\eta}\,\g_k^*$.}
\item [(b)] For each $k\ge 1$, the vector 
$(\gamma_0^*, \gamma_1^*,\ldots,\gamma_{k-1}^*)$ 
is the minimizer of the function $e_k(\g_0,\ldots,\g_{k-1})$ over the set
$$\mathbb{G}_k 
\triangleq \left\{\alpha \in \Real^k : 0 < \alpha_j \leq \frac{\eta}{L^2}
\hbox{ for }j =1,\ldots, k\right\}.$$
More precisely, for any $k\ge 1$ and any 
$(\g_0,\ldots,\g_{k-1})\in \mathbb{G}_k$, we have
	\begin{align*}
	e_{k}(\g_0,\ldots,\g_{k-1}) 
         - e_{k}(\g_0^*,\ldots,\g_{k-1}^*) \geq \nu^2(\g_{k-1}-\g_{k-1}^*)^2.
	\end{align*}	
\end{itemize}
\end{proposition}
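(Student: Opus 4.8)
The plan is to prove both parts by induction on $k$, after first securing a preliminary fact: the sequence $\{\gamma_k^*\}$ defined by \eqref{eqn:optimal_self_adaptive_2}--\eqref{eqn:optimal_self_adaptive} stays inside the feasible interval $(0,\tfrac{\eta}{L^2}]$ and is non-increasing. First I would observe that, since $F$ is simultaneously Lipschitz continuous with constant $L$ and strongly monotone with constant $\eta$ (Assumption~\ref{assum:different_main}(b)--(c)), Cauchy--Schwarz applied to the strong monotonicity inequality gives $\eta\|x-y\|^2 \le \|F(x)-F(y)\|\,\|x-y\| \le L\|x-y\|^2$, so $\eta \le L$ and hence $\tfrac{\eta}{L^2} \le \tfrac1\eta < \tfrac2\eta$. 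Combined with the standing hypothesis $\nu \ge L\sqrt{e_0/2}$, i.e. $e_0 \le 2\nu^2/L^2$, this yields $\gamma_0^* = \tfrac{\eta e_0}{2\nu^2} \le \tfrac{\eta}{L^2}$. A short induction on \eqref{eqn:optimal_self_adaptive} then gives $0 < \gamma_k^* \le \tfrac{\eta}{L^2} \le \tfrac1\eta$ for all $k$, because whenever $\gamma_{k-1}^* \in (0,\tfrac2\eta)$ the factor $1-\tfrac{\eta}{2}\gamma_{k-1}^*$ lies in $(0,1)$, forcing $0 < \gamma_k^* \le \gamma_{k-1}^*$. This guarantees both that the candidate vector $(\gamma_0^*,\ldots,\gamma_{k-1}^*)$ belongs to $\mathbb{G}_k$ and that $1-\eta\gamma_k^* \ge 0$, facts that will be invoked repeatedly.

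Part (a) then follows by a direct induction. The base case $k=0$ is just the definition $\gamma_0^* = \tfrac{\eta}{2\nu^2}e_0$ in \eqref{eqn:optimal_self_adaptive_2}. For the inductive step, assuming $e_k(\gamma_0^*,\ldots,\gamma_{k-1}^*) = \tfrac{2\nu^2}{\eta}\gamma_k^*$, I substitute into the defining recursion \eqref{def:e_k} and simplify using \eqref{eqn:optimal_self_adaptive}:
\[
e_{k+1}(\gamma_0^*,\ldots,\gamma_k^*) = (1-\eta\gamma_k^*)\tfrac{2\nu^2}{\eta}\gamma_k^* + (\gamma_k^*)^2\nu^2 = \tfrac{2\nu^2}{\eta}\gamma_k^*\bigl(1-\tfrac{\eta}{2}\gamma_k^*\bigr) = \tfrac{2\nu^2}{\eta}\gamma_{k+1}^*.
\]

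For part (b), I would prove by induction on $k$ the stronger quadratic-growth estimate stated in the proposition, $e_k(\gamma_0,\ldots,\gamma_{k-1}) - e_k(\gamma_0^*,\ldots,\gamma_{k-1}^*) \ge \nu^2(\gamma_{k-1}-\gamma_{k-1}^*)^2$ for every $(\gamma_0,\ldots,\gamma_{k-1})\in\mathbb{G}_k$; since the right-hand side is nonnegative, this estimate contains the minimality claim. The base case $k=1$ is a one-line computation: from \eqref{def:e_k}, $e_1(\gamma_0) - e_1(\gamma_0^*) = -\eta e_0(\gamma_0-\gamma_0^*) + \nu^2(\gamma_0^2 - (\gamma_0^*)^2)$, and substituting $\eta e_0 = 2\nu^2\gamma_0^*$ collapses this exactly to $\nu^2(\gamma_0-\gamma_0^*)^2$. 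For the inductive step, fix $(\gamma_0,\ldots,\gamma_k)\in\mathbb{G}_{k+1}$, abbreviate $e := e_k(\gamma_0,\ldots,\gamma_{k-1})$ and $e^* := e_k(\gamma_0^*,\ldots,\gamma_{k-1}^*) = \tfrac{2\nu^2}{\eta}\gamma_k^*$ (by part (a)), so the inductive hypothesis gives $e \ge e^*$. Because $\gamma_k \le \tfrac{\eta}{L^2} \le \tfrac1\eta$, the one-step error map $t\mapsto (1-\eta\gamma_k)t + \gamma_k^2\nu^2$ is non-decreasing, whence $e_{k+1}(\gamma_0,\ldots,\gamma_k) \ge (1-\eta\gamma_k)e^* + \gamma_k^2\nu^2$. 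Comparing this lower bound with $e_{k+1}(\gamma_0^*,\ldots,\gamma_k^*) = (1-\eta\gamma_k^*)e^* + (\gamma_k^*)^2\nu^2$, the difference equals $\nu^2(\gamma_k^2-(\gamma_k^*)^2) - \eta e^*(\gamma_k-\gamma_k^*)$, and substituting $\eta e^* = 2\nu^2\gamma_k^*$ turns it into $\nu^2(\gamma_k-\gamma_k^*)^2$, closing the induction.

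The main obstacle is not any single computation — each is elementary algebra — but the feasibility bookkeeping underpinning the argument: one must verify at the outset that $\gamma_k^* \in (0,\tfrac{\eta}{L^2}]$ for every $k$, so that the candidate minimizer actually lies in $\mathbb{G}_{k+1}$ and so that $1-\eta\gamma_k^* \ge 0$. This is exactly where the hypothesis $\nu \ge L\sqrt{e_0/2}$ and the inequality $\eta \le L$ are used, and the same bound $\gamma_k \le \tfrac1\eta$ is what makes the one-step error map monotone in its argument, allowing the inductive hypothesis $e \ge e^*$ to propagate through the recursion.
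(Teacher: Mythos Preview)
Your proof is correct and follows essentially the same approach as the paper's own argument (which the paper omits here but spells out in the proof of the more general Proposition~\ref{prop:rec_results}, of which Proposition~\ref{prop:adap_cent_results} is the special case $\beta=0$). Both proceed by first establishing the feasibility $\gamma_k^*\in(0,\eta/L^2]$ via $\eta\le L$ and the hypothesis on $\nu$, then prove part~(a) by a direct induction on the recursion, and prove part~(b) by induction using the key observation that $1-\eta\gamma_k\ge 0$ on $\mathbb{G}_k$ so that the inductive bound $e_k\ge e_k^*$ propagates through the one-step map; your algebra and the paper's are the same up to organization.
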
 
The almost-sure convergence of the produced sequence holds for a family
of steplength rules, as captured by the folloing result.

\begin{proposition}[Almost-sure convergence of ASA scheme]
\label{prop:self_adaptive}\label{prop:rec_convergence}
Let Assumptions~\ref{assum:different_main} and \ref{assum:w_k_bound} hold. Assume that the stepsize sequence $\{\g_k\}$ is generated by
the following adaptive scheme:
\begin{align*}
\gamma_{k}=\gamma_{k-1}(1-c\gamma_{k-1}) 
\qquad \hbox{for all }k \geq 1,
\end{align*}
where $c>0$ is a scalar and the initial stepsize is such that
$0<\g_0<\frac{1}{c}$. Then, the sequence $\{x_k\}$ generated by
algorithm (\ref{eqn:algorithm_centralized}) converges almost surely to a random
point that belongs to the optimal set.
\end{proposition}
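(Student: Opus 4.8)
The plan is to treat the stepsize recursion as a purely deterministic object, feed its asymptotics into the recursive error bound of Lemma~\ref{lemma:error_bound_cent}, and then close the argument with the probabilistic recursion Lemma~\ref{lemma:probabilistic_bound_polyak}. The optimal set here is the singleton $\{x^*\}$, since strong monotonicity of $F$ forces uniqueness of the solution of VI$(X,F)$, so it suffices to prove $x_k \to x^*$ almost surely.

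\emph{Step 1: analysis of $\{\g_k\}$.} Since $0 < \g_0 < 1/c$, an easy induction shows $\g_k > 0$ and $1 - c\g_{k-1} \in (0,1)$ for all $k$, so $\{\g_k\}$ is strictly decreasing and hence converges; passing to the limit in $\g_k = \g_{k-1}(1-c\g_{k-1})$ gives $\g_k \to 0$. To obtain the summability properties, pass to reciprocals $u_k \triangleq 1/\g_k$, which satisfy
\[
u_k - u_{k-1} = \frac{c}{1 - c\g_{k-1}} \in \left( c,\ \frac{c}{1-c\g_0}\right],
\]
since $0 < \g_{k-1} \le \g_0$. Telescoping yields $u_0 + ck < u_k \le u_0 + \frac{ck}{1-c\g_0}$, hence $\g_k = \Theta(1/k)$; in particular $\sum_{k} \g_k = \infty$ and $\sum_{k} \g_k^2 < \infty$, and $\g_k \to 0$ (recovering the earlier observation).

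\emph{Step 2: reduction via Lemma~\ref{lemma:error_bound_cent}.} Because $\g_k \to 0$, there is an index $K$ with $\g_k \le \min\{\eta/L^2,\, 1/\eta\}$ for all $k \ge K$; for such $k$ the coefficient in \eqref{rate_cent_nu} obeys $1 - 2\eta\g_k + L^2\g_k^2 \le 1 - \eta\g_k$, so Lemma~\ref{lemma:error_bound_cent} gives
\[
\EXP{\|x_{k+1}-x^*\|^2 \mid \sF_k} \le (1-\eta\g_k)\,\|x_k-x^*\|^2 + \g_k^2\nu^2, \qquad k \ge K .
\]
Taking full expectations in \eqref{rate_cent_nu} and iterating from $k=0$, together with $\EXP{\|x_0\|^2}<\infty$, shows $\EXP{\|x_k-x^*\|^2}<\infty$ for every $k$, in particular for $k=K$.

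\emph{Step 3: apply Lemma~\ref{lemma:probabilistic_bound_polyak}.} Set $v_k \triangleq \|x_{K+k}-x^*\|^2$, $\alpha_k \triangleq \eta\g_{K+k}$ and $\mu_k \triangleq \g_{K+k}^2\nu^2$. Since $v_k$ is $\sF_{K+k}$-measurable, the tower property converts the conditional inequality of Step 2 into $\EXP{v_{k+1}\mid v_0,\ldots,v_k}\le(1-\alpha_k)v_k+\mu_k$. The remaining hypotheses hold by Step 1 and the choice of $K$: $\EXP{v_0}<\infty$; $0\le\alpha_k\le 1$ (as $\g_{K+k}\le 1/\eta$); $\mu_k\ge 0$; $\sum_k\alpha_k=\eta\sum_k\g_k=\infty$; $\sum_k\mu_k=\nu^2\sum_k\g_k^2<\infty$; and $\mu_k/\alpha_k=\g_{K+k}\nu^2/\eta\to 0$. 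Hence $v_k\to 0$ almost surely, i.e., $x_k\to x^*$ almost surely, which is the assertion.

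\emph{Main obstacle.} Nothing in this argument is genuinely hard once Lemmas~\ref{lemma:error_bound_cent} and~\ref{lemma:probabilistic_bound_polyak} are in hand; the only point demanding care is the \emph{two-sided} control of $\{\g_k\}$, since one needs a lower bound of order $1/k$ to get $\sum\g_k=\infty$ simultaneously with an upper bound of order $1/k$ to get $\sum\g_k^2<\infty$. The reciprocal substitution in Step 1 supplies both bounds at once and is the cleanest route; a secondary technical point is justifying the passage from conditioning on $\sF_k$ to conditioning on $v_0,\ldots,v_k$, which the measurability of the iterates handles routinely.
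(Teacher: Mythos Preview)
Your proof is correct and follows essentially the same route the paper indicates: feed the recursive error bound (Lemma~\ref{lemma:error_bound_cent}) into the Polyak-type recursion (Lemma~\ref{lemma:probabilistic_bound_polyak}), after verifying that the stepsizes are non-summable with summable squares. The paper omits this centralized proof and instead points to the distributed version (Proposition~\ref{prop:rel_bound}), which is the same argument with $\beta=0$; there the summability properties of the recursive stepsize are not derived but quoted from~\cite{Farzad1}, whereas your reciprocal substitution $u_k=1/\g_k$ yields the two-sided $\Theta(1/k)$ estimate directly and makes the argument self-contained.
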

The proofs of Propositions~\ref{prop:adap_cent_results} and~\ref{prop:rec_convergence} are omitted, 
as they follow 
		from a more general results for a distributed SA method, as discussed
		in the next subsection. 

\subsection{A distributed adaptive steplength SA (DASA)
	scheme}\label{sec:dasa}
Unfortunately, in multi-agent regimes, the implementation of the stepsize rule (\ref{eqn:optimal_self_adaptive_2})-(\ref{eqn:optimal_self_adaptive})
requires a central coordinator who can prescribe and enforce such rules.
In this section, we extend the centralized rule to accommodate a
multi-agent setting wherein each agent chooses its own update rule,
	given the global knowledge of problem parameters. In such a regime, given that 
	the set $X$ is the Cartesian product of closed and
	convex sets $X_1, \hdots, X_N$,  our interest lies
	in developing steplength update rules in the context of
	method~\eqref{eqn:algorithm_different} where the $i$-th
	agent chooses its steplength, denoted by $\gamma_{k,i}$, as per 
\[\g_{k,i}=\g_{k-1,i}(1-c_i\g_{k-1,i}),\]
with $c_i>0$ being a constant associated with agent $i$ mapping $F_i(x)$, while 
the initial stepsize $\g_{0,i}$ is suitably selected. The
	following assumption imposes requirements on the stepsizes
		$\gamma_{k,i}$ in~\eqref{eqn:algorithm_different}.
\begin{assumption}\label{assum:step_error_sub}
Assume that the following hold:
\begin{enumerate} 
 \item[(3a)] The stepsize sequences {$\{\gamma_{k,i}\}$, $i=1,\ldots, N$,}
 are such that $\gamma_{k,i}>0$ for all $k$ and $i$, with
$\sum_{k=0}^\infty \gamma_{k,i}=\infty$ and $\sum_{k=0}^\infty
\gamma_{k,i}^2 < \infty$ for all~$i$.
\item [(3b)] If $\{\delta_k\}$ and $\{\Gamma_k\}$ are positive
	sequences such that 
$\delta_k \leq \min_{1\le i\le N}{\g_{k,i}}$ and $\Gamma_k \geq \max_{1\le i\le N}{\g_{k,i}}$ for all 
$k\ge0$, then 
\[\frac{\Gamma_k-\delta_k}{\delta_k}\le \beta 
\qquad\hbox{for all $k\ge 0$},\]
where $\beta$ is a scalar satisfying $0\le \beta<\frac{\eta}{L}$.
\end{enumerate}
\end{assumption}
{\textbf{Remark:} }Assumption~ (\ref{assum:step_error_sub}a) is a
standard requirement on steplength sequences while
Assumption~(\ref{assum:step_error_sub}b) provides an additional condition on 
the discrepancy between the stepsize values $\gamma_{k,i}$ at each iteration $k$. 
This condition is satisfied, for instance, 
when $\gamma_{k,1}=\ldots=\gamma_{k,N}$, in which case $\beta=0$.

When deriving an adaptive rule, we use Lemma~\ref{lemma:probabilistic_bound_polyak} 
and a distributed generalization of Lemma~\ref{lemma:error_bound_cent}, which is 
given below.
	
\begin{lemma}\label{lemma:error_bound_1}
Consider algorithm~\eqref{eqn:algorithm_different}. Let
	Assumptions~\ref{assum:different_main} and~\ref{assum:w_k_bound}
	hold. 
\begin{enumerate}
	\item[(a)] The following relation holds almost surely for all $k\ge 0$:
\begin{align*}
\EXP{\|x_{k+1}-x^*\|^2\mid\sF_k} 
\leq 	 (1-2(\eta+L)\delta_k+2L\Gamma_k+L^2\Gamma_k^2)\|x_k-x^*\|^2+\an{\Gamma_k^2\nu^2},
\end{align*} 
where $\{\delta_k\}$ and $\{\Gamma_k\}$ are positive sequences such that
$\delta_k \leq \min_{1\le i\le N}{\g_{k,i}}$ and $\Gamma_k \geq \max_{1\le i\le N}{\g_{k,i}}$ for all $k$.
\item [(b)] If Assumption (\ref{assum:step_error_sub}b) holds, then 
the following relation is valid for all $k \ge 0$:
\begin{align*}
\EXP{\|x_{k+1}-x^*\|^2} 
\leq (1-2(\eta-\beta L)\delta_k+(1+\beta)^2L^2\delta_k^2)\EXP{\|x_	k-x^*\|^2}
+(1+\beta)^2\delta_k^2\nu^2.
\end{align*} 
\end{enumerate}
\end{lemma}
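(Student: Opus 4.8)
The plan is to replicate the one-step analysis of Lemma~\ref{lemma:error_bound_cent} block by block and then aggregate over the $N$ coordinate blocks, the one new wrinkle being that the steplengths $\gamma_{k,i}$ are not common across blocks. For part (a) I would start from the blockwise fixed-point identity $x_i^* = \Pi_{X_i}(x_i^* - \gamma_{k,i}F_i(x^*))$, valid for every $\gamma_{k,i}>0$ by the Cartesian conditions~\eqref{eqn:CVI}, combine it with the update~\eqref{eqn:algorithm_different} and nonexpansiveness of $\Pi_{X_i}$, expand the square, and take $\EXP{\cdot\mid\sF_k}$. Since $x_{k,i}$ is $\sF_k$-measurable and $\EXP{w_{k,i}\mid\sF_k}=0$, the cross term in $w_{k,i}$ drops, leaving
\begin{align*}
\EXP{\|x_{k+1,i}-x_i^*\|^2\mid\sF_k} \le \|x_{k,i}-x_i^*\|^2 - 2\gamma_{k,i}(x_{k,i}-x_i^*)^T(F_i(x_k)-F_i(x^*)) + \gamma_{k,i}^2\|F_i(x_k)-F_i(x^*)\|^2 + \gamma_{k,i}^2\EXP{\|w_{k,i}\|^2\mid\sF_k}.
\end{align*}

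The main obstacle is the cross term: summing over $i$ does not immediately expose strong monotonicity of $F$, which needs a common multiplier. I would resolve this by writing $\gamma_{k,i} = \delta_k + (\gamma_{k,i}-\delta_k)$ with $0\le \gamma_{k,i}-\delta_k \le \Gamma_k-\delta_k$, so that $\sum_i \gamma_{k,i}(x_{k,i}-x_i^*)^T(F_i(x_k)-F_i(x^*))$ separates into the common part $\delta_k(x_k-x^*)^T(F(x_k)-F(x^*))\ge \eta\delta_k\|x_k-x^*\|^2$ (Assumption~\ref{assum:different_main}(c)) and a residual whose absolute value, by Cauchy--Schwarz across the blocks and Lipschitz continuity, is at most $(\Gamma_k-\delta_k)\|x_k-x^*\|\,\|F(x_k)-F(x^*)\|\le (\Gamma_k-\delta_k)L\|x_k-x^*\|^2$. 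For the last two terms I would bound $\gamma_{k,i}\le\Gamma_k$ uniformly and use $\sum_i\|F_i(x_k)-F_i(x^*)\|^2 = \|F(x_k)-F(x^*)\|^2\le L^2\|x_k-x^*\|^2$ together with $\sum_i\EXP{\|w_{k,i}\|^2\mid\sF_k} = \EXP{\|w_k\|^2\mid\sF_k}\le\nu^2$ from Assumption~\ref{assum:w_k_bound}. Collecting the coefficients of $\|x_k-x^*\|^2$ gives $1-2\eta\delta_k+2(\Gamma_k-\delta_k)L+L^2\Gamma_k^2 = 1-2(\eta+L)\delta_k+2L\Gamma_k+L^2\Gamma_k^2$, which is exactly the claimed bound.

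For part (b) I would invoke Assumption~(\ref{assum:step_error_sub}b), rewritten as $\Gamma_k\le(1+\beta)\delta_k$, and substitute it into the bound of part (a): the term $-2(\eta+L)\delta_k+2L\Gamma_k$ is at most $-2(\eta-\beta L)\delta_k$, while $L^2\Gamma_k^2\le(1+\beta)^2L^2\delta_k^2$ and $\Gamma_k^2\nu^2\le(1+\beta)^2\delta_k^2\nu^2$. Because the resulting coefficients are deterministic (functions only of $\delta_k$, $L$, $\eta$, $\beta$), taking total expectations on both sides and applying the tower property yields the stated inequality. I expect both parts to be essentially routine once the steplength-splitting identity in part (a) is in place; the one point worth flagging is that $\delta_k$ and $\Gamma_k$ must be chosen as deterministic sequences so that the recursion obtained here feeds cleanly into Lemma~\ref{lemma:probabilistic_bound_polyak} in the subsequent convergence argument.
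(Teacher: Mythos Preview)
Your proposal is correct and follows essentially the same route as the paper: blockwise nonexpansiveness and the fixed-point identity, conditional expectation to kill the $w_{k,i}$ cross term, the splitting $\gamma_{k,i}=\delta_k+(\gamma_{k,i}-\delta_k)$ to isolate the strong-monotonicity piece, Cauchy--Schwarz plus Lipschitz on the residual, and the uniform bound $\gamma_{k,i}\le\Gamma_k$ on the quadratic and noise terms. Part (b) is likewise identical, substituting $\Gamma_k\le(1+\beta)\delta_k$ and taking expectations.
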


\begin{proof}
\noindent (a) 
From the properties of the projection operator, we know that a vector $x^*$ solves VI$(X,F)$ 
problem if and only if $ x^*$ satisfies $x^*=\Pi_X(x^*-\g F(x^*))$ for any $\gamma>0$.
By the definition of algorithm \eqref{eqn:algorithm_different} and the non-expansiveness property of the
projection operator, we have for all $k\ge0$ and all~$i$,
\begin{align*}
 \|x_{k+1,i}-x_i^*\|^2  
 &=\|\Pi_{X_i}(x_{k,i}-\g_{k,i}(F_i(x_k)+w_{k,i}))  -\Pi_{X_i}(x_i^*-\g_{k,i} F_i(x^*))\|^2 \\
 & \le \|x_{k,i}-x_i^*-\g_{k,i}(F_i(x_k)+w_{k,i}-F_i(x^*))\|^2.
\end{align*}	
Taking the expectation conditioned on the past, and using
$\EXP{w_{k,i}\mid \sF_k}=0$, we have
\begin{align*}
 \EXP{\|x_{k+1,i}-x_i^*\|^2\mid\sF_k} &\le \|x_{k,i}-x_i^*\|^2
 +\gamma_{k,i}^2\|F_i(x_k)-F_i(x^*)\|^2
 +\g_{k,i}^2\EXP{\|w_{k,i}\|^2\mid \sF_k}\\ 
&-2\gamma_{k,i} (x_{k,i}-x_i^*)^T(F_i(x_k)-F_i(x^*)).
\end{align*}
Now, by summing the preceding relations over $i$, we have
\begin{align*}
\EXP{\|x_{k+1}-x^*\|^2\mid\sF_k} & \le  \|x_{k}-x^*\|^2 
+\sum_{i=1}^N\gamma_{k,i}^2\|F_i(x_k)-F_i(x^*)\|^2 +\sum_{i=1}^N\g_{k,i}^2
\EXP{\|w_{k,i}\|^2\mid \sF_k}\cr 
&-2\sum_{i=1}^N\gamma_{k,i} (x_{k,i}-x_i^*)^T(F_i(x_k)-F_i(x^*)).
\end{align*}
Using $\g_{k,i}\le \Gamma_k$ and Assumption~\ref{assum:w_k_bound}, we can see that
$\sum_{i=1}^N\g_{k,i}^2\EXP{\|w_{k,i}\|^2\mid \sF_k}\le \Gamma_k^2\nu^2$ almost surely for all 
$k\ge0$.
Thus, from the preceding relation, we have
\begin{align}\label{ineq:main1}
\EXP{\|x_{k+1}-x^*\|^2\mid\sF_k} \le & \|x_{k}-x^*\|^2 +\underbrace{\sum_{i=1}^N\gamma_{k,i}^2\|F_i(x_k)-F_i(x^*)\|^2}_{\bf Term\,1} 
+{\Gamma_k^2\nu^2}\cr
&\underbrace{-2\sum_{i=1}^N\gamma_{k,i} (x_{k,i}-x_i^*)^T(F_i(x_k)-F_i(x^*))}_{\bf Term\,2}.
\end{align}
Next, we estimate Term $1$ and Term $2$ in (\ref{ineq:main1}).
By using the definition of $\Gamma_k$ and by leveraging the Lipschitzian
	property of mapping $F$, we obtain
\begin{align}\label{ineq:main2}
\hbox{Term}\,1 \le  \Gamma_k^2\|F(x_k)-F(x^*)\|^2\le\Gamma_k^2L^2\|x_k-x^*\|^2. 
\end{align}
By adding and subtracting
$-2\sum_{i=1}^N\delta_k(x_{k,i}-x_i^*)^T(F_i(x_k)-F_i(x^*))$ {from} Term $2$, 
{and using $\sum_{i=1}^N(x_{k,i}-x^*_i)^T(F_i(x_k)-F_i(x^*))=(x_{k}-x^*)^T(F(x_k)-F(x^*))$},
we further obtain
\begin{equation*}
\begin{split}
\hbox{Term}\,2  \le 
 & -2\delta_k(x_{k}-x^*)^T(F(x_k)-F(x^*)) -2\sum_{i=1}^N(\g_{k,i}
 -\delta_k)(x_{k,i}-x_i^*)^T(\an{F_i(x_{k}) - F_i(x^*)}).
\end{split}
\end{equation*} 
By Cauchy-Schwartz inequality, the  preceding relation yields
\begin{equation*}
\begin{split}
\hbox{Term}\,2  \le & -2\delta_k(x_{k}-x^*)^T(F(x_k)-F(x^*)) +2(\g_{k,i}-\delta_k)\sum_{i=1}^N\|x_{k,i}-x_i^*\|\|\an{F_i(x_{k}) - F_i(x^*)}\|\cr
 \le & -2\delta_k(x_{k}-x^*)^T(F(x_k)-F(x^*)) +2(\Gamma_k-\delta_k)\|x_{k}-x^*\|\|F(x_{k})-F(x^*)\|,
\end{split}
\end{equation*}
where in the last relation, we use the definition of $\Gamma_k$ and {H\"{o}lder's} inequality. Invoking strong monotonicity of the mapping $F$ for bounding the first term and by
utilizing the Lipschitzian property of the second term of the preceding relation, we have
\begin{equation}\label{ineq:term2}
\begin{split}
\hbox{Term}\,2  \le -2\eta\delta_k\|x_{k}-x^*\|^2 +2(\Gamma_k-\delta_k)L\|x_{k}-x^*\|^2.
\end{split}
\end{equation}
The desired inequality is obtained by combining 
relations (\ref{ineq:main1}), (\ref{ineq:main2}), and (\ref{ineq:term2}).

\noindent (b)  Assumption \ref{assum:step_error_sub}b implies that $\Gamma_k \leq
(1+\beta)\delta_k$. Combining this observation with the result of part (a), we obtain
almost surely for all $k \ge 0$,
\begin{align*}
\EXP{\|x_{k+1}-x^*\|^2\mid\sF_k} 
\leq & (1-2(\eta-\beta L)\delta_k+(1+\beta)^2L^2\delta_k^2)\|x_	k-x^*\|^2
+(1+\beta)^2\delta_k^2\an{\nu^2}.
\end{align*}  
Taking expectations in the preceding inequality, we obtain the desired relation.
\end{proof}
The following proposition proves the almost-sure convergence of the
	distributed SA scheme when the steplength sequences satisfy the
		bounds prescribed by Assumption ~\ref{assum:step_error_sub}b.
		
\begin{proposition}[Almost-sure convergence of distributed SA scheme]
\label{prop:rel_bound} 
Let Assumptions \ref{assum:different_main}, \ref{assum:w_k_bound}, and
\ref{assum:step_error_sub} hold. Then, the sequence $\{x_k\}$ generated
by algorithm~\eqref{eqn:algorithm_different} converges almost surely
to the unique solution of VI$(X,F)$.
\end{proposition}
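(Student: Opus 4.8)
The plan is to recast the assertion as a single application of Lemma~\ref{lemma:probabilistic_bound_polyak} to the nonnegative random sequence $v_k\triangleq\|x_k-x^*\|^2$. Recall that $x^*$ is the unique solution of VI$(X,F)$ by strong monotonicity (Assumption~\ref{assum:different_main}(c)), and that $\EXP{v_0}=\EXP{\|x_0-x^*\|^2}<\infty$ since $\EXP{\|x_0\|^2}<\infty$. From the proof of Lemma~\ref{lemma:error_bound_1}(b) — specifically the conditional inequality established there before expectations are taken — we have, almost surely for all $k\ge0$,
\[
\EXP{v_{k+1}\mid\sF_k}\le(1-\alpha_k)v_k+\mu_k,\qquad \alpha_k\triangleq 2(\eta-\beta L)\delta_k-(1+\beta)^2L^2\delta_k^2,\quad \mu_k\triangleq(1+\beta)^2\nu^2\delta_k^2,
\]
where $\{\delta_k\}$ is the sequence from Assumption~\ref{assum:step_error_sub}. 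Since each $v_j$ is $\sF_j$-measurable, $\sigma(v_0,\dots,v_k)\subseteq\sF_k$, so taking $\EXP{\cdot\mid v_0,\dots,v_k}$ of both sides and invoking the tower property produces exactly the recursion required by Lemma~\ref{lemma:probabilistic_bound_polyak}, with deterministic $\alpha_k,\mu_k$.

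It then remains to verify the summability and limit conditions. Because $\delta_k\le\gamma_{k,i}$ for every $i$ and $\sum_k\gamma_{k,i}^2<\infty$ by Assumption~(\ref{assum:step_error_sub}a), we obtain $\sum_k\delta_k^2<\infty$, hence $\delta_k\to0$ and $\sum_k\mu_k=(1+\beta)^2\nu^2\sum_k\delta_k^2<\infty$. Since $0\le\beta<\eta/L$ we have $\eta-\beta L>0$, so $\delta_k\to0$ guarantees $0\le\alpha_k\le1$ for all sufficiently large $k$ and, moreover, $\alpha_k\ge(\eta-\beta L)\delta_k$ eventually. Finally,
\[
\frac{\mu_k}{\alpha_k}=\frac{(1+\beta)^2\nu^2\,\delta_k}{2(\eta-\beta L)-(1+\beta)^2L^2\delta_k}\;\longrightarrow\;0,
\]
as the numerator tends to $0$ and the denominator tends to $2(\eta-\beta L)>0$. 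The sign requirement $0\le\alpha_k\le1$ may fail for finitely many initial indices; this is harmless, because convergence of $\{v_k\}$ is a tail property, so I would apply Lemma~\ref{lemma:probabilistic_bound_polyak} to the shifted sequence $\{v_{k_0+k}\}_k$ with $k_0$ chosen past the last index where $\alpha_k\notin[0,1]$, noting $\EXP{v_{k_0}}<\infty$ follows by iterating the expectation-form bound of Lemma~\ref{lemma:error_bound_1}(b) finitely many times.

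The step that requires genuine care — and the main obstacle — is establishing $\sum_k\alpha_k=\infty$, which amounts to $\sum_k\delta_k=\infty$. Assumption~(\ref{assum:step_error_sub}a) only asserts $\sum_k\gamma_{k,i}=\infty$ for each fixed $i$, while $\delta_k\le\min_{1\le i\le N}\gamma_{k,i}$, so divergence of $\sum_k\delta_k$ is not automatic; this is precisely where Assumption~(\ref{assum:step_error_sub}b) is used. Rewriting that condition as $\Gamma_k\le(1+\beta)\delta_k$ and using $\gamma_{k,i}\le\Gamma_k$ gives $\delta_k\ge\gamma_{k,i}/(1+\beta)$, whence $\sum_k\delta_k\ge\frac{1}{1+\beta}\sum_k\gamma_{k,i}=\infty$; combined with $\alpha_k\ge(\eta-\beta L)\delta_k$ eventually, this yields $\sum_k\alpha_k=\infty$. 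With all hypotheses of Lemma~\ref{lemma:probabilistic_bound_polyak} in place, we conclude $v_k=\|x_k-x^*\|^2\to0$ almost surely, i.e., $x_k\to x^*$ almost surely, which is the claim.
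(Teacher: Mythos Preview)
Your proof is correct and follows essentially the same route as the paper: both apply Lemma~\ref{lemma:probabilistic_bound_polyak} to $v_k=\|x_k-x^*\|^2$ with the identical choices $\alpha_k=2(\eta-\beta L)\delta_k-(1+\beta)^2L^2\delta_k^2$ and $\mu_k=(1+\beta)^2\nu^2\delta_k^2$, and both extract $\sum_k\delta_k=\infty$ from Assumption~\ref{assum:step_error_sub}b via $\gamma_{k,i}\le\Gamma_k\le(1+\beta)\delta_k$. You are somewhat more explicit than the paper in handling the finitely many indices where $\alpha_k\notin[0,1]$ and in justifying the passage from conditioning on $\sF_k$ to conditioning on $\sigma(v_0,\dots,v_k)$, but these are refinements of the same argument rather than a different approach.
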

\begin{proof}
Consider the relation of Lemma~\ref{lemma:error_bound_1}(a). For this relation,
we show that the conditions of Lemma
\ref{lemma:probabilistic_bound_polyak} are satisfied, which will allow us to
claim  the almost-sure convergence of $x_k$ to $x^*$. 
Let us define $v_k
\triangleq \an{\|x_{k}-x^*\|^2}$, 
and 
\begin{equation}\label{eqn:almudef}
\alpha_k \triangleq 2(\eta-\beta
		L)\delta_k-L^2\delta_k^2(1+\beta)^2,
		\qquad
\mu_k \triangleq (1+\beta)^2\delta_k^2\nu^2.
\end{equation}
Next, we show that $0\le \a_k\le 1$ for $k$ sufficiently large.  Since
$\g_{k,i}$ tends to zero for all $i=1,\ldots,N$, we may conclude that
	$\delta_k$ goes to zero as $k$ grows. In turn, as $\delta_k$ goes to
	zero, for $k$ large enough, say $k\ge k_1$, we have
\[1-\frac{(1+\beta)^2L^2\delta_k}{2(\eta-\beta L)}>0.\]
By Assumption \ref{assum:step_error_sub}b we have $\beta < \frac{\eta}{L}$, which implies
$\eta-\beta L>0$. Thus,  we have $\alpha_k \geq 0$ for $k\ge k_1$. 
Also, for $k$ large enough, say $k\ge k_2$, we have $\alpha_k \leq 1$
since $\delta_k\to0$. Therefore, when $k\ge\max\{k_1,k_2\}$ we have $0 \leq \alpha_k \leq 1$. 
Obviously, $v_k, \mu_k \geq 0$. 

From Assumption~\ref{assum:step_error_sub}b we have
$\delta_k\le \gamma_k\le (1+\beta)\delta_k$ for all $k$. Using these relations 
and the conditions on $\g_{k,i}$
given in Assumption~\ref{assum:step_error_sub}a, we can show that
$\sum_{k=0}^\infty \delta_k=\infty$ and 
$\sum_{k=0}^\infty \delta_k^2 <\infty.$
Furthermore, from the preceding properties of the sequence $\{\delta_k\}$, and 
the definitions of $\alpha_k$ and $\mu_k$ in~\eqref{eqn:almudef},
we can see that $\sum_{k=0}^\infty\alpha_k = \infty$ and $\sum_{k=0}^\infty\mu_k < \infty$.
Finally, by the definitions of $\alpha_k$ and $\mu_k$ 
we have 
\[{\lim_{k \rightarrow	\infty}}\frac{\mu_k}{\alpha_k}={\lim_{k
	\rightarrow	\infty}}\left(\frac{(1+\beta)^2\delta_k\nu^2}{2(\eta-\beta L)
\left(1-\frac{(1+\beta)^2L^2\delta_k}{2(\eta-\beta L)}\right)}\right)={\frac{(1+\beta)^2(\lim_{k \rightarrow	\infty}\delta_k)\nu^2}{2(\eta-\beta L)
\left(1-\frac{(1+\beta)^2L^2(\lim_{k \rightarrow	\infty}\delta_k)}{2(\eta-\beta L)}\right)},}\]
implying that $\lim_{k \to\infty}\frac{\mu_k}{\alpha_k}=0$ \an{since $\delta_k\to0$}.
Hence, all conditions of Lemma \ref{lemma:probabilistic_bound_polyak}
are satisfied and we may conclude that $\|x_k-x^*\|^2\rightarrow 0$ almost surely. 
\end{proof} 
Proposition~\ref{prop:rel_bound} states that under specified assumptions
	on the set $X$ and mapping $F$, the stochastic errors $w_k$, and the
		stepsizes $\g_{k,i}$, the distributed SA scheme is guaranteed to
		converge to the unique solution of VI$(X,F)$ almost surely. Our
		goal in the remainder of this section lies in providing a stepsize rule that
		aims to minimize a suitably defined error function of the algorithm, while
		satisfying Assumption \ref{assum:step_error_sub}.
		To begin our analysis, we consider the
result of  Lemma \ref{lemma:error_bound_1}b for all $k \geq 0$:
\begin{align}\label{equ:bound_recursive_01}
\EXP{\|x_{k+1}-x^*\|^2} \leq (1-2(\eta-\beta L)\delta_k+(1+\beta)^2L^2\delta_k^2)\EXP{\|x_	k-x^*\|^2}+(1+\beta)^2\delta_k^2\nu^2,
\end{align} 
where $\delta_k\le\min_{1\le i\le N}\g_{k,i}$.
When the stepsizes {$\gamma_{k,i}$} are further restricted so that 
$0<\delta_k\le\frac{\eta-\beta L}{(1+\beta)^2L^2},$ we have
\[
1-2(\eta-\beta L)\delta_k  +(1+\beta)^2L^2\delta_k^2  \leq 1-(\eta -\beta L) \delta_k.
\]
Thus, for  {$0 < \delta_k \le\frac{\eta-\beta L}{(1+\beta)^2L^2}$,}  from inequality (\ref{equ:bound_recursive_01}) we obtain
 \begin{align}\label{rate_nu_est_different}
\EXP{\|x_{k+1}-x^*\|^2} 
\le (1-(\eta-\beta L) \delta_k)\EXP{\|x_k-x^*\|^2} + (1+\beta)^2\delta_k^2\nu^2
\qquad\hbox{for all }k \geq 0.
\end{align}
Similar to the discussion in Section~\ref{sec:asa} in the context of the ASA
	scheme, let us view the quantity
$\EXP{\|x_{k+1}-x^*\|^2}$ as an error $e_{k+1}$ of the method arising
from the use of the lower bounds $\delta_0,\delta_1,\ldots, \delta_k$ for
the stepsize values $\g_{0,i},\g_{1,i}\cdots,\g_{k,i}$, $i=1,\ldots,N$.
Relation~\eqref{rate_nu_est_different} gives us an error estimate for 
algorithm (\ref{eqn:algorithm_different}) in terms of the lower bounds $\delta_0,\delta_1,\ldots, \delta_k$.
We use this estimate to develop an adaptive stepsize procedure. Consider the case when~\eqref{rate_nu_est_different} holds with
	equality, which is the worst case error.  In this case, the error satisfies the following recursive
		relation:
\begin{align*}
 e_{k+1} = (1-(\eta-\beta L) \delta_k) e_k+(1+\beta^2)\nu^2\delta_k^2 
 \qquad\hbox{for all }k \geq 0.
\end{align*}
Let us assume that we want to run the algorithm
(\ref{eqn:algorithm_different}) for a fixed number of iterations, say $K$. 
The preceding relation shows that $e_{K}$ depends on {the lower bound
values up to the $K$th iteration. This motivates us to view the lower
	bounds $\delta_0,\delta_1,\ldots, \delta_{K-1}$ as decision
	variables that can be used to minimize the corresponding } upper
	bound on the mean-squared error of the
algorithm up to iteration $K$. Thus, the variables are {$\delta_0, \delta_1, \ldots,
	\delta_{K-1}$} and the objective function is the error function
	$e_K(\delta_0, \delta_1, \ldots, \delta_{K-1})$. 
	{We proceed to derive a rule for generating lower bounds 
	$\delta_0, \delta_1, \ldots, \delta_{K}$ by minimizing the error $e_{K+1}$.
	Importantly, it turns out that $\delta_{K}$ is a function of only the most
	recent bound} $\delta_{K-1}$. We define the real-valued
	error function $e_k(\delta_0, \delta_1, \ldots, \delta_{k-1})$ by 
	{{considering an equality in}~\eqref{rate_nu_est_different}}: 
\begin{align}\label{def:e_k_different}
 e_{k+1}(\delta_0,\ldots,\delta_k) \triangleq &(1-(\eta-\beta L) \delta_k) e_k(\delta_0,\ldots,\delta_{k-1}) 
 +(1+\beta^2) \nu^2\delta_k^2\qquad\hbox{for all }k \geq 0,
\end{align}
where $e_0$ is a positive scalar, $\{\delta_k\}$ is a sequence of positive scalars 
such that $0 < \delta_k\le\frac{\eta-\beta L}{(1+\beta)^2L^2}$, $L$
	is the Lipschitz constant of 
the mapping $F$, $\eta$ is the strong monotonicity parameter of $F$, 
and $\nu^2$ is the upper bound for the second moment of the error 
norms $\|w_k\|$ (cf.~Assumption~\ref{assum:w_k_bound}).

Now let us consider the stepsize sequence $\{\delta^*_k\}$ given by 
\begin{align}
& \delta_0^*=\frac{\eta-\beta L}{2(1+\beta)^2\nu^2}\,e_0\label{eqn:gmin0}
\\
& \delta_k^*=\delta_{k-1}^*\left(1-\left(\frac{\eta-\beta L}{2}\right)\delta_{k-1}^*\right) 
\quad \hbox{for all }k\ge 1,\label{eqn:gmink}
\end{align}
where $e_0$ is the same initial error as for the errors $e_k$ in~\eqref{def:e_k_different}.
In what follows, we often abbreviate $e_k(\delta_0,\ldots,\delta_{k-1})$ by $e_k$
whenever this is unambiguous. The next proposition shows that 
the lower bound sequence $\{\delta_k^*\}$ for $\gamma_{k,i}$ given by
(\ref{eqn:gmin0})--(\ref{eqn:gmink}) minimizes the errors $e_k$ over 
$[0,\frac{\eta -\beta L}{(1+\beta)^2L^2}]^{k}$.

\begin{proposition}[An adaptive lower bound steplength SA scheme]\label{prop:rec_results}
Let $e_k(\delta_0,\ldots,\delta_{k-1})$ be defined as in~\eqref{def:e_k_different},
where $e_0$ is a given {positive} scalar, $\nu$ is an upper bound defined in Assumption
\ref{assum:w_k_bound}, $\eta$ and $L$ are the strong monotonicity and
	Lipschitz constants of {the} mapping $F$ respectively { and $\nu$
		is chosen such that $ \nu \geq L\sqrt{\frac{e_0}{2}}$.} 
Let $\beta$ be a scalar such that $0\le \beta <\frac{\eta}{L}$,
and let the sequence $\{\delta^*_k\}$ be given by 
\eqref{eqn:gmin0}--\eqref{eqn:gmink}.
Then, the following hold:
\begin{itemize}
\item [(a)] {For all $k\ge0$, the error $e_k$  satisfies $e_k(\delta_0^*,\ldots,\delta_k^*) = \frac{2(1+\beta)^2\nu^2}{\eta-\beta L}\,\delta_k^*.$}
\item [(b)] For any $k\ge 1$, the vector $(\delta_0^*, \delta_1^*,\ldots,\delta_{k-1}^*)$ is 
the minimizer of the function $e_k(\delta_0,\ldots,\delta_{k-1})$ over the set
$$\mathbb{G}_k 
\triangleq \left\{\alpha \in \Real^k : {0< }\alpha_j \leq \frac{\eta-\beta L}{(1+\beta)^2L^2}, j =1,\ldots, k\right\}.$$
More precisely, for any $k\ge 1$ and any $(\delta_0,\ldots,\delta_{k-1})\in \mathbb{G}_k,$ we have
	\begin{align*}
	e_{k}(\delta_0,\ldots,\delta_{k-1}) 
         - e_{k}(\delta_0^*,\ldots,\delta_{k-1}^*)\geq (1+\beta)^2\nu^2(\delta_{k-1}-\delta_{k-1}^*)^2.
	\end{align*}	
\end{itemize}
\end{proposition}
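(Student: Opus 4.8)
The plan is to prove part (a) by induction on $k$, and then deduce part (b) from part (a) together with a quantitative convexity (or "first-order optimality") estimate for the error function $e_k$ viewed as a function of $\delta_{k-1}$ with the earlier variables fixed. For part (a), I would first check the base case $k=0$: the claim $e_0 = \frac{2(1+\beta)^2\nu^2}{\eta-\beta L}\,\delta_0^*$ is exactly the definition~\eqref{eqn:gmin0} of $\delta_0^*$, rearranged. For the inductive step, assume $e_k(\delta_0^*,\ldots,\delta_{k-1}^*) = \frac{2(1+\beta)^2\nu^2}{\eta-\beta L}\,\delta_k^*$; substitute this into the recursion~\eqref{def:e_k_different} evaluated at $\delta_k = \delta_k^*$, so that
\[
e_{k+1}(\delta_0^*,\ldots,\delta_k^*) = (1-(\eta-\beta L)\delta_k^*)\,\frac{2(1+\beta)^2\nu^2}{\eta-\beta L}\,\delta_k^* + (1+\beta)^2\nu^2 (\delta_k^*)^2,
\]
and then simplify the right-hand side; collecting the $(\delta_k^*)^2$ terms should produce $\frac{2(1+\beta)^2\nu^2}{\eta-\beta L}\,\delta_k^*\bigl(1 - \tfrac{\eta-\beta L}{2}\delta_k^*\bigr)$, which by~\eqref{eqn:gmink} equals $\frac{2(1+\beta)^2\nu^2}{\eta-\beta L}\,\delta_{k+1}^*$, completing the induction. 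Along the way I would note that $\{\delta_k^*\}$ is positive and decreasing and stays within $(0,\frac{\eta-\beta L}{(1+\beta)^2 L^2}]$, which requires checking that the initial choice $\delta_0^*$ lies in this interval — this is where the hypothesis $\nu \geq L\sqrt{e_0/2}$ is used, since $\delta_0^* = \frac{(\eta-\beta L)e_0}{2(1+\beta)^2\nu^2} \leq \frac{(\eta-\beta L)e_0}{2(1+\beta)^2 L^2 (e_0/2)} = \frac{\eta-\beta L}{(1+\beta)^2 L^2}$.

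For part (b), I would argue by induction on $k$ as well, showing the sharper gap inequality. Fix $k\ge 1$ and $(\delta_0,\ldots,\delta_{k-1}) \in \mathbb{G}_k$. Using the recursion~\eqref{def:e_k_different}, write $e_k(\delta_0,\ldots,\delta_{k-1}) = (1-(\eta-\beta L)\delta_{k-1})e_{k-1}(\delta_0,\ldots,\delta_{k-2}) + (1+\beta)^2\nu^2\delta_{k-1}^2$. By the inductive hypothesis (the minimality of $(\delta_0^*,\ldots,\delta_{k-2}^*)$ for $e_{k-1}$) and the fact that the coefficient $1-(\eta-\beta L)\delta_{k-1}$ is nonnegative on $\mathbb{G}_k$ (since $\delta_{k-1} \leq \frac{\eta-\beta L}{(1+\beta)^2 L^2} \leq \frac{1}{\eta-\beta L}$, using $\eta - \beta L \le L$, hence $(\eta-\beta L)^2 \le L^2 \le (1+\beta)^2 L^2$), we may lower-bound $e_{k-1}(\delta_0,\ldots,\delta_{k-2})$ by $e_{k-1}(\delta_0^*,\ldots,\delta_{k-2}^*)$. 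This reduces the problem to the one-dimensional minimization of
\[
q(\delta_{k-1}) \triangleq (1-(\eta-\beta L)\delta_{k-1})\,e_{k-1}(\delta_0^*,\ldots,\delta_{k-2}^*) + (1+\beta)^2\nu^2\delta_{k-1}^2,
\]
a quadratic in $\delta_{k-1}$ with leading coefficient $(1+\beta)^2\nu^2 > 0$. Using part (a) to substitute $e_{k-1}(\delta_0^*,\ldots,\delta_{k-2}^*) = \frac{2(1+\beta)^2\nu^2}{\eta-\beta L}\delta_{k-1}^*$, the unconstrained minimizer of $q$ works out to be exactly $\delta_{k-1}^*$ (by~\eqref{eqn:gmink}, $\delta_{k-1}^*$ is the vertex), and since $\delta_{k-1}^* \in (0,\frac{\eta-\beta L}{(1+\beta)^2 L^2}]$ it is feasible, so $q(\delta_{k-1}) - q(\delta_{k-1}^*) = (1+\beta)^2\nu^2(\delta_{k-1}-\delta_{k-1}^*)^2$ by completing the square. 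Chaining the two bounds gives $e_k(\delta_0,\ldots,\delta_{k-1}) - e_k(\delta_0^*,\ldots,\delta_{k-1}^*) \geq (1+\beta)^2\nu^2(\delta_{k-1}-\delta_{k-1}^*)^2$, as claimed; in particular the minimizer of $e_k$ over $\mathbb{G}_k$ is $(\delta_0^*,\ldots,\delta_{k-1}^*)$.

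The main obstacle I anticipate is the monotonicity/feasibility bookkeeping: one must verify that, under the stated hypotheses on $e_0$, $\nu$, and $\beta$, every $\delta_k^*$ stays in $(0, \frac{\eta-\beta L}{(1+\beta)^2 L^2}]$ (so that the recursion is self-consistent and the coefficient $1-(\eta-\beta L)\delta_k$ remains in $[0,1)$), and that the inductive passage from $e_{k-1}$ to $e_k$ is legitimate because that coefficient is nonnegative on all of $\mathbb{G}_k$ — this in turn rests on the elementary inequality $\eta - \beta L \leq L$ (equivalently $\eta \le (1+\beta)L$), which follows from $\eta \le L$ (a consequence of strong monotonicity and Lipschitz continuity of $F$ via Cauchy–Schwarz). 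Once these inequalities are in place the algebra is routine. I would present part (a) first, then use it as a black box inside the proof of part (b), exactly mirroring the structure the paper says is shared with Proposition~\ref{prop:adap_cent_results}.
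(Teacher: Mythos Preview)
Your proposal is correct and follows essentially the same approach as the paper: induction on $k$ for part (a), and for part (b) an induction that combines the inductive hypothesis $e_{k-1}\ge e_{k-1}^*$ with the nonnegativity of $1-(\eta-\beta L)\delta_{k-1}$ (derived via $(\eta-\beta L)^2\le (1+\beta)^2L^2$ from $\eta\le L$) and then completes the square using part~(a). The only cosmetic difference is that the paper writes out the base case $k=1$ of part~(b) explicitly, whereas you absorb it into the uniform inductive step; also, the vertex of your quadratic $q$ being $\delta_{k-1}^*$ follows directly from part~(a) rather than from~\eqref{eqn:gmink}, but this does not affect the argument.
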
 
\begin{proof}
(a) \ To show the result, we use induction on $k$. 
Trivially, it holds for 
$k=0$ from \eqref{eqn:gmin0}. Now, suppose that we have
$e_k(\delta_0^*,\ldots,\delta_{k-1}^*) = \frac{2(1+\beta)^2\nu^2}{\eta-\beta L}\,\delta_k^*$ for some $k$,
and consider the case for $k+1$. From the definition of the error $e_k$ 
in~\eqref{def:e_k_different}, we have
\begin{align*}
 e_{k+1}(\delta_0^*,\ldots,\delta_k^*)  
 &=(1-(\eta-\beta L)\delta_k^*)e_{k}(\delta_0^*,\ldots,\delta_{k-1}^*)+(1+\beta)^2\nu^2\an{(\delta_k^*)^2}\cr 
&=(1-(\eta-\beta L)\delta_k^*)
\frac{2(1+\beta)^2\nu^2}{\eta-\beta L}\,\delta_k^*+(1+\beta)^2\nu^2\an{(\delta_k^*)^2},\end{align*}
where the second equality follows by the inductive hypothesis.
Thus,
\begin{align*}
e_{k+1}(\delta_0^*,\ldots,\delta_k^*) 
=\frac{2(1+\beta)^2\nu^2}{\eta-\beta L}\,\delta_k^*\left(1-\frac{\eta-\beta L}{2}\,\delta_k^*\right) =\frac{2(1+\beta)^2\nu^2}{\eta-\beta L}\,\delta_{k+1}^*,
\end{align*}
where the last equality follows by the definition of $\delta_{k+1}^*$ in
\eqref{eqn:gmink}. Hence, the result holds for all $k \ge 0$.

\noindent (b) \ {First we need to show that
	$(\delta^*_{0},\ldots,\delta_{k-1}^*)\in\mathbb{G}_k$. By \an{our assumption
		on $e_0$, we have  {$0<e_0 \le \frac{2\nu^2}{L^2}$}, 
		which by the definition of $\delta_0^*$ in~\eqref{eqn:gmin0} implies \sfy{that} {$0< \delta_0^*\leq \frac{\eta- \beta L}{(1+\beta)^2 L^2}$}, i.e., $\delta_0^*\in \mathbb{G}_1$.}
Using {the induction on $k$}, from relations (\ref{eqn:gmin0})--(\ref{eqn:gmink}), it can be shown that 
{$0 < \delta_{k}^*<\delta^*_{k-1}$} for all $k\ge1$. 
Thus, $(\delta^*_{0},\ldots,\delta_{k-1}^*)\in\mathbb{G}_k$ for all $k\ge1$.}
Using the induction on $k$ again,
we now show that {the} vector $(\delta_0^*,\delta_1^*,\ldots,\delta_{k-1}^*)$  
minimizes the error {$e_k(\delta_0,\ldots,\delta_{k-1})$} for all $k\ge1$. 
From the definition of the error $e_1$ and the relation
$e_1(\delta_0^*)=\frac{2(1+\beta)^2\nu^2}{\eta-\beta L}\,\delta_1^*$ shown in part (a), we have
\begin{align*}
e_1(\delta_0) -e_1(\delta_0^*)
 &=(1-(\eta-\beta L) \delta_0)e_0+(1+\beta)^2\nu^2 \delta_0^2-\frac{2(1+\beta)^2\nu^2}{\eta-\beta L}\,\delta_1^*.
\end{align*}
Using $\delta_1^*=\delta_0^*\left(1-\frac{\eta-\beta L}{2}\,\delta_0^*\right)$ {(cf.~\eqref{eqn:gmink})}, 
we obtain
\begin{align*}
e_1(\delta_0) -e_1(\delta_0^*)
 &=(1-(\eta-\beta L) \an{\delta_0})e_0+(1+\beta)^2\nu^2 \delta_0^2-\frac{2(1+\beta)^2\nu^2}{\eta-\beta L}\,\delta_0^*+(1+\beta)^2\nu^2(\delta_0^*)^2.
\end{align*}
Since $e_0=\frac{2(1+\beta)^2\nu^2}{\eta-\beta L}\,\delta_0^*$ (cf.~\eqref{eqn:gmin0}), it follows that
\begin{align*}
e_1(\delta_0) -e_1(\delta_0^*)
 &=\an{ -2(1+\beta)^2\nu^2\delta_0\delta_0^* }
 +(1+\beta)^2\nu^2 \delta_0^2+(1+\beta)^2\nu^2(\delta_0^*)^2
 =(1+\beta)^2\nu^2\left(\delta_0-\delta_0^*\right)^2,
\end{align*}
showing that the inductive hypothesis holds for $k = 1$. 
Now, suppose that 
{\begin{align}\label{eqn:ind}
	e_{k}(\delta_0,\ldots,\delta_{k-1}) 
         - e_{k}(\delta_0^*,\ldots,\delta_{k-1}^*)\geq (1+\beta)^2\nu^2(\delta_{k-1}-\delta_{k-1}^*)^2.
	\end{align}
holds for some $k$ and for all $(\delta_0,\ldots,\delta_{k-1}) \in\mathbb{G}_k$. 
We next show that relation~\eqref{eqn:ind} holds
for $k+1$ and for all $(\delta_0,\ldots,\delta_k) \in\mathbb{G}_{k+1}$.} 
To simplify the notation, 
we use $e_{k+1}^*$ to denote the error $e_{k+1}$ evaluated at 
$(\delta_{0}^*,\delta_1^*,\ldots,\delta_k^*)$, and $e_{k+1}$ when 
evaluating at an arbitrary vector 
$(\delta_0,\delta_1,\ldots,\delta_k)\in\mathbb{G}_{k+1}$.
Using (\ref{def:e_k_different})  and part (a), we have
\begin{align*}
e_{k+1}- e_{k+1}^*
& = (1-(\eta-\beta L) \delta_k)e_k
+(1+\beta)^2\nu^2\delta_k^2-\frac{2(1+\beta)^2\nu^2}{\eta-\beta L} \delta_{k+1}^*.
\end{align*}
Under the inductive hypothesis, we have $e_k\ge e_k^*$ ({cf.}~\eqref{eqn:ind}). 
When $(\delta_0,\delta_1,\ldots,\delta_k)\in \mathbb{G}_k$, we have
	$\delta_k \leq \frac{(\eta-\beta L)}{(1+\beta)^2L^2}.$ Next, we show
		that $\frac{(\eta-\beta L)}{(1+\beta)^2L^2} \leq
		\frac{1}{\eta-\beta L}$. By the definition of strong
		monotonicity and Lipschitzian property, we have $\eta \leq L$. Using $\eta \leq L$
		and $0\leq \beta \leq \frac{\eta}{L}$ we obtain
\begin{align*}
&\eta \leq (1+\beta)L \Rightarrow \eta -\beta L \leq (1+\beta)L \cr 
 \Rightarrow & (\eta -\beta L)^2 \leq (1+\beta)^2L^2 \Rightarrow \frac{(\eta-\beta L)}{(1+\beta)^2L^2} \leq \frac{1}{\eta-\beta L}. \end{align*}
This implies that for $(\delta_0,\delta_1,\ldots,\delta_k)\in \mathbb{G}_k$, we have 
$\delta_k \leq\frac{1}{\eta-\beta L}$ or equivalently $ 1-(\eta-\beta L)\delta_k\geq0 $.
Using this, the relation $e_k^*=\frac{2(1+\beta)^2\nu^2}{\eta-\beta L}\an{\delta_k^*}$
of part (a), and the definition of $\delta_{k+1}^*$, we obtain
\begin{align*}
e_{k+1} - e_{k+1}^* 
  & \geq  (1-(\eta-\beta L) \delta_k)\frac{2(1+\beta)^2\nu^2}{\eta-\beta L}\delta_k^*+(1+\beta)^2\nu^2\delta_k^2 -\frac{2(1+\beta)^2\nu^2}{\eta-\beta L} \delta_k^*\left(1-\frac{\eta-\beta L}{2}\delta_k^*\right) 
 \cr 
 &=(1+\beta)^2 \nu^2(\delta_k-\delta_k^*)^2.
\end{align*}
Hence, we have 
$
e_{k} - e_{k}^*\ge
(1+\beta)^2\nu^2(\delta_{k-1}-\delta_{k-1}^*)^2
$ for all $k\ge1$ and all 
$(\delta_0,\ldots,\delta_{k-1})\in\mathbb{G}_k$. 
\end{proof}
\textbf{Remark:} From Proposition~{\ref{prop:rec_results}},
the minimizer {$(\delta_0^*,\ldots,\delta_{k-1}^*)$} of $e_k$
over $\mathbb{G}_k$ is unique up to a scaling by a factor $\rho\in(0,1)$.
Specifically, the solution {$(\delta_0^*,\ldots,\delta_{k-1}^*)$} is obtained
for an initial error $e_0\geq 0$ satisfying $\nu \geq L\,\sqrt{\frac{e_0}{2}}$,
where $e_0$ can be chosen to be arbitrarily large by scaling $\nu$
appropriately. Suppose that in the definition of the sequence
{$\{\delta_k^*\}$}, $\rho e_0$ is employed instead of $e_0$ for some
$\rho\in(0,1)$. Then it can be seen (by following the proof) that, for
the resulting sequence, Proposition~\ref{prop:rec_results} would still
hold.\hfill $\square$
 
We have just provided an analysis in terms of a lower bound sequence $\{\delta_{k}\}$. 
We may conduct a similar analysis for \an{an upper bound sequence $\{\Gamma_{k}\}$.}
In particular, from {Lemma~\ref{lemma:error_bound_1}a} we have 
\begin{align*}
\EXP{\|x_{k+1}-x^*\|^2} \leq (1-2(\eta+L)\delta_k+2L\Gamma_k+L^2\Gamma_k^2)
\EXP{\|x_k-x^*\|^2} +\an{\Gamma_k^2\nu^2}\qquad\an{\hbox{for all $k\ge0$}}.
\end{align*} 
When $\frac{\Gamma_k-\delta_k}{\delta_k}\le \beta$ with $0\le \beta<\frac{\eta}{L}$, we have
$\frac{\Gamma_k}{1+\beta}\le\delta_k$, and we obtain the following relation:
\begin{align*}
\EXP{\|x_{k+1}-x^*\|^2} \leq (1-\frac{2(\eta+L)}{1+\beta}\Gamma_k+2L\Gamma_k
+L^2\Gamma_k^2)\EXP{\|x_k-x^*\|^2}+\Gamma_k^2\nu^2.
\end{align*}	
When $\Gamma_k$ is further restricted so that $0<\Gamma_k\le\frac{\eta-\beta L}{(1+\beta)L^2}$, 
we have
\begin{align*}
\EXP{\|x_{k+1}-x^*\|^2} 
 \le (1-\frac{(\eta-\beta L)}{1+\beta} \Gamma_k)\EXP{\|x_k-x^*\|^2}
 + \Gamma_k^2\nu^2\qquad\hbox{for all }k \geq 0.
\end{align*}
Using the preceding relation and \an{following a similar analysis as in the proof of 
Proposition~\ref{prop:rec_results}, we can show that the optimal 
choice of the sequence $\{\Gamma^*_{k}\}$} is given by 
\begin{align}
& \Gamma_{0}^*=\frac{\eta-\beta L}{2(1+\beta)\nu^2}\,e_0,\label{eqn:gmax0}
\\
& \Gamma_k^*=\Gamma_{k-1}^*\left(1-\frac{\eta-\beta L}{2(1+\beta)}\Gamma_{k-1}^*\right)\qquad \hbox{for all } k \ge 1, 
\label{eqn:gmaxk}
\end{align}
where $e_0$ is such that {$0< e_0\le\frac{2\nu^2}{L^2}$.

In the following lemma, we derive a relation between two recursive sequences, 
which is employed within our main convergence result for {adaptive stepsizes $\{\gamma_{k,i}\}$. }
\begin{lemma}\label{lemma:two-rec}
Suppose that sequences 
\an{$\{\lambda_k\}$ and $\{\g_k\}$} are given with the following recursive equations 
\an{for all  $k\geq 0$,}
\begin{align*}
\lambda_{k+1} =\lambda_{k}(1-\lambda_{k}),  \qquad
 \g_{k+1}  =\g_{k}(1-{\bar c}\g_{k}),
\end{align*}
\an{where {$\bar c>0$ is a given constant and} $\lambda_0={\bar c}\g_0$}. Then for all $k \geq 0$,
$\lambda_{k}={\bar c}\g_k.$
\end{lemma}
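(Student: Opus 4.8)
The plan is a direct induction on $k$. The base case is immediate: by hypothesis $\lambda_0 = \bar c\,\g_0$, which is exactly the claimed identity at $k=0$. For the inductive step, I would assume $\lambda_k = \bar c\,\g_k$ for some $k\ge 0$ and then compute $\lambda_{k+1}$ from its recursion, substituting the inductive hypothesis:
\begin{align*}
\lambda_{k+1} = \lambda_k(1-\lambda_k) = \bar c\,\g_k\,(1-\bar c\,\g_k) = \bar c\,\bigl(\g_k(1-\bar c\,\g_k)\bigr) = \bar c\,\g_{k+1},
\end{align*}
where the last equality uses the defining recursion for $\{\g_k\}$. This closes the induction and establishes $\lambda_k = \bar c\,\g_k$ for all $k\ge 0$.

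I do not anticipate any genuine obstacle here; the only point worth a moment's care is keeping the constant $\bar c$ in the right place when passing from $\bar c\,\g_k(1-\bar c\,\g_k)$ to $\bar c\,\g_{k+1}$ — i.e., recognizing that the factor multiplying $(1-\bar c\,\g_k)$ is precisely $\g_k$ (not $\bar c\,\g_k$), so that the bracketed quantity matches the recursion for $\g_{k+1}$ exactly. No growth, positivity, or boundedness conditions on $\{\g_k\}$ or $\{\lambda_k\}$ are needed for the identity itself; those would only matter if one additionally wanted to track ranges such as $\lambda_k\in(0,1)$, which is not asserted in the statement.
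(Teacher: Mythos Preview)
Your proposal is correct and follows essentially the same induction argument as the paper: the paper multiplies the recursion for $\g_{k+1}$ by $\bar c$ and then substitutes $\lambda_k=\bar c\,\g_k$, while you start from $\lambda_{k+1}$ and substitute the other way, but the content is identical. Nothing is missing.
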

\begin{proof}
We use the induction on $k$. For $k=0$, the relation holds since $\lambda_0={\bar c}\g_0$. 
Suppose that for some $k \geq 0$ the relation holds. Then, we have 
\begin{align*}
&\g_{k+1}=\g_{k}(1-{\bar c}\g_{k}) 
\Rightarrow  \,{\bar c}\g_{k+1}={\bar c}\g_{k}(1-{\bar c}\g_{k})
\Rightarrow  \,{\bar c}\g_{k+1}=\lambda_{k}(1-\lambda_{k})
\Rightarrow  \,{\bar c}\g_{k+1}=\lambda_{k+1}.
\end{align*}
Hence, the result holds for $k+1$ implying that it holds for all $k \ge 0$.
\end{proof}

Using Lemma~\ref{lemma:two-rec}, we now present a relation between the
lower and upper bound sequences given by $\{\delta_k^*\}$ and
	$\{\Gamma_k^*\}$, respectively. 
\begin{lemma}\label{lemma:min_maxm_relation}
Suppose that \an{the} sequences 
$\{\delta_k^*\}$ and $\{\Gamma_k^*\}$ are given by relations \eqref{eqn:gmin0}--\eqref{eqn:gmink} and \eqref{eqn:gmax0}--\eqref{eqn:gmaxk}, \an{respectively, 
where {$0< e_0 \le \frac{2 \nu^2}{L^2}$} and $0\le \beta<\frac{\eta}{L}$. Then, for all $k \geq 0$,}
{$\Gamma_k^*=(1+\beta)\delta_k^*.$}
\end{lemma}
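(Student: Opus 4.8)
The plan is to reduce both defining recursions to the single canonical form $x_{k+1}=x_k(1-x_k)$ handled by Lemma~\ref{lemma:two-rec}, and then to notice that, after this reduction, the rescaled lower-bound and upper-bound sequences happen to start from the \emph{same} value, so they must coincide. First I would record the trivial but essential fact that, since $0\le\beta<\frac{\eta}{L}$, the quantity $\eta-\beta L$ is strictly positive, so all the scaling constants appearing below are well defined and nonzero.

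Next I would apply Lemma~\ref{lemma:two-rec} twice. Taking $\bar c=\frac{\eta-\beta L}{2}$ and $\g_k=\delta_k^*$, relation \eqref{eqn:gmink} is precisely $\delta_k^*=\delta_{k-1}^*(1-\bar c\,\delta_{k-1}^*)$, so the sequence $\{\lambda_k\}$ defined by $\lambda_{k+1}=\lambda_k(1-\lambda_k)$ with $\lambda_0=\bar c\,\delta_0^*$ satisfies $\lambda_k=\frac{\eta-\beta L}{2}\,\delta_k^*$ for all $k\ge0$; moreover, by \eqref{eqn:gmin0}, $\lambda_0=\frac{\eta-\beta L}{2}\cdot\frac{\eta-\beta L}{2(1+\beta)^2\nu^2}\,e_0=\frac{(\eta-\beta L)^2}{4(1+\beta)^2\nu^2}\,e_0$. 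Similarly, taking $\bar c'=\frac{\eta-\beta L}{2(1+\beta)}$ and $\g_k=\Gamma_k^*$, relation \eqref{eqn:gmaxk} is $\Gamma_k^*=\Gamma_{k-1}^*(1-\bar c'\,\Gamma_{k-1}^*)$, so the sequence $\{\mu_k\}$ defined by $\mu_{k+1}=\mu_k(1-\mu_k)$ with $\mu_0=\bar c'\,\Gamma_0^*$ satisfies $\mu_k=\frac{\eta-\beta L}{2(1+\beta)}\,\Gamma_k^*$ for all $k\ge0$; and by \eqref{eqn:gmax0}, $\mu_0=\frac{\eta-\beta L}{2(1+\beta)}\cdot\frac{\eta-\beta L}{2(1+\beta)\nu^2}\,e_0=\frac{(\eta-\beta L)^2}{4(1+\beta)^2\nu^2}\,e_0$.

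The crux is the identity $\lambda_0=\mu_0$: the extra factor $(1+\beta)$ appearing in the denominator of $\delta_0^*$ relative to $\Gamma_0^*$ is cancelled exactly by the factor $(1+\beta)$ relating $\bar c$ to $\bar c'$. Since $\{\lambda_k\}$ and $\{\mu_k\}$ obey the identical scalar recursion $x_{k+1}=x_k(1-x_k)$ from the identical starting value, a one-line induction gives $\lambda_k=\mu_k$ for all $k\ge0$. Unwinding the definitions yields $\frac{\eta-\beta L}{2}\,\delta_k^*=\frac{\eta-\beta L}{2(1+\beta)}\,\Gamma_k^*$, and dividing by the nonzero factor $\frac{\eta-\beta L}{2}$ gives $\Gamma_k^*=(1+\beta)\,\delta_k^*$, as claimed. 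I do not anticipate a genuine obstacle here; the only care needed is in tracking the two distinct scaling constants $\bar c,\bar c'$, verifying the matching of initial conditions, and using $\eta-\beta L>0$ to justify the final division.
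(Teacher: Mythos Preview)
Your proof is correct and follows essentially the same approach as the paper: both reduce the two recursions to the canonical form $\lambda_{k+1}=\lambda_k(1-\lambda_k)$ via Lemma~\ref{lemma:two-rec}, observe that the rescaled sequences share the common initial value $\lambda_0=\frac{(\eta-\beta L)^2}{4(1+\beta)^2\nu^2}\,e_0$, and then read off the relation $\Gamma_k^*=(1+\beta)\delta_k^*$. The only cosmetic difference is that the paper introduces a single auxiliary sequence $\{\lambda_k\}$ and applies Lemma~\ref{lemma:two-rec} twice to it, whereas you introduce two auxiliary sequences $\{\lambda_k\}$ and $\{\mu_k\}$ and then argue they coincide.
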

\begin{proof}
Suppose that \an{$\{\lambda_k\}$} is defined by the following recursive equation
\begin{align*}
\lambda_{k+1}=\lambda_{k}(1-\lambda_{k}), \quad \hbox{for all }  k\geq 0,
\end{align*}
where $\lambda_0=\frac{(\eta-\beta L)^2}{4(1+\beta)^2\nu^2}\,e_0$.
To obtain the result,  we apply Lemma~\ref{lemma:two-rec} 
to  sequences $\{\lambda_k\}$ and $\{\delta_k^*\}$, and then to sequences 
$\{\lambda_k\}$ and $\{\Gamma_k^*\}$. Specifically,
Lemma \ref{lemma:two-rec} implies that 
$ \lambda	_k =  \frac{\eta-\beta L}{2}\delta_k^*$ for all $k \geq 0$.
Invoking Lemma \ref{lemma:two-rec} for sequences $\{\lambda_k\}$ and $\{\Gamma_k^*\}$, 
we have
$ \lambda	_k =\frac{\eta-\beta L}{2(1+\beta)}\Gamma_k^*.$
From the preceding two relations, we conclude that $\Gamma_k^*=(1+\beta)\delta_k^*$
for all $k \geq 0$.   
\end{proof}

The relations~\eqref{eqn:gmin0}--\eqref{eqn:gmink} and
\eqref{eqn:gmax0}--\eqref{eqn:gmaxk}, respectively, are essentially
adaptive rules for determining the best upper and lower bounds for
stepsize sequences $\{\g_{k,i}\}$, where  "best'' corresponds to the
minimizers of the associated error bounds.  Having provided this
intermediate result, our main result is stated next and shows the
almost-sure convergence of the distributed adaptive SA (DASA) scheme. 

\begin{theorem}[A class of distributed adaptive steplength SA rules]\label{prop:DASA}
Suppose that Assumptions~\ref{assum:different_main} and~\ref{assum:w_k_bound} hold, and  
assume that the set $X$ is bounded.
Suppose that, for all $i=1, \ldots,N$, 
the stepsizes $\{\g_{k,i}\}$ in algorithm~\eqref{eqn:algorithm_different}
are given by the following recursive equations:
\begin{align}
& {\gamma_{0,i}=r_ic\,\frac{D^2}{\left(1+\dfrac{\eta-2c}{L}\right)^2\nu^2}},\label{eqn:gi0}
\\
& \gamma_{k,i}=\gamma_{k-1,i}\left(1-\frac{c}{r_i}\gamma_{k-1,i}\right) 
\quad \hbox{for all }k\ge 1.\label{eqn:gik}
\end{align}
where $D \triangleq \max_{x \in X}\|x-x_0\|$, 
$c$ is a scalar satisfying $c\in(0,\frac{\eta}{2})$,
 $r_i$ is a parameter such that $r_i \in [1, 1+\frac{\eta-2c}{L}]$,
 $\eta$ is the strong monotonicity parameter of the
mapping $F$,
 $L$ is the Lipschitz constant of $F$, and $\nu$ is the upper bound {defined in} Assumption
\ref{assum:w_k_bound}{. We assume that the constant $\nu$ is chosen large enough such that $\nu \geq \frac{DL}{\sqrt{2}}$.} Then, the following hold:
\begin{itemize}
\item [(a)] For any $i,j=1,\ldots, N$ and $k \geq 0$, {$\dfrac{\g_{k,i}}{r_i}=\dfrac{\g_{k,j}}{r_j}.$}
\item [(b)] Assumption \ref{assum:step_error_sub}b holds with $\beta=\frac{\eta -2c}{L}$, $\delta_k=\delta_k^*$, $\Gamma_k=\Gamma_k^*$, and $e_0=D^2$, where $\delta_k^*$ and $\Gamma_k^*$ are given by \eqref{eqn:gmin0}--\eqref{eqn:gmink} and \eqref{eqn:gmax0}--\eqref{eqn:gmaxk}, respectively.
\item [(c)] The sequence $\{x_k\}$
generated by algorithm (\ref{eqn:algorithm_different}) converges almost surely
to the unique solution of VI$(X,F)$.
\item [(d)] The results of Proposition~\ref{prop:rec_results} hold for $\delta_k^*$ when $e_0=D^2$ 
and $\beta=\frac{\eta -2c}{L}$.  
\end{itemize}
\end{theorem}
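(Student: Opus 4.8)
The plan is to verify each of the four claims in turn, with parts (a) and (b) being essentially computational and serving as the foundation, while parts (c) and (d) follow by invoking the machinery already established. For part (a), I would use induction on $k$. At $k=0$, the formula \eqref{eqn:gi0} gives $\gamma_{0,i}/r_i = c\,D^2/\bigl((1+(\eta-2c)/L)^2\nu^2\bigr)$, which is manifestly independent of $i$. For the inductive step, assume $\gamma_{k-1,i}/r_i = \gamma_{k-1,j}/r_j$ for all $i,j$; then dividing \eqref{eqn:gik} by $r_i$ yields $\gamma_{k,i}/r_i = (\gamma_{k-1,i}/r_i)\bigl(1-c(\gamma_{k-1,i}/r_i)\bigr)$, and since the right-hand side depends on $i$ only through the common quantity $\gamma_{k-1,i}/r_i$, the conclusion follows. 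This also shows that the common value $\gamma_{k,i}/r_i$ satisfies the recursion $t_k = t_{k-1}(1-c\,t_{k-1})$, which matches the scalar recursion in Proposition~\ref{prop:self_adaptive} (with $\bar c=c$ in the language of Lemma~\ref{lemma:two-rec}).

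For part (b), set $\beta = (\eta-2c)/L$; note $c\in(0,\eta/2)$ forces $\beta\in(0,\eta/L)$, so the range constraint on $\beta$ in Assumption~\ref{assum:step_error_sub}b is met. With $r_i\in[1,1+\beta]$ and the common value $t_k := \gamma_{k,i}/r_i$, we have $\gamma_{k,i} = r_i t_k$, hence $\min_i \gamma_{k,i} = t_k$ and $\max_i \gamma_{k,i} = (1+\beta)t_k$, giving $(\Gamma_k-\delta_k)/\delta_k = \beta$ exactly when we take $\delta_k = t_k$ and $\Gamma_k = (1+\beta)t_k$. It remains to identify $t_k$ with $\delta_k^*$ from \eqref{eqn:gmin0}--\eqref{eqn:gmink} when $e_0 = D^2$: the recursion $t_k = t_{k-1}(1-c\,t_{k-1})$ with $c = (\eta-\beta L)/2$ (which holds since $\beta L = \eta-2c$) is precisely \eqref{eqn:gmink}, and the initial value $t_0 = cD^2/\bigl((1+\beta)^2\nu^2\bigr) = (\eta-\beta L)D^2/\bigl(2(1+\beta)^2\nu^2\bigr)$ matches \eqref{eqn:gmin0} with $e_0=D^2$. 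Then Lemma~\ref{lemma:min_maxm_relation} gives $\Gamma_k^* = (1+\beta)\delta_k^*$, consistent with our identification $\Gamma_k = (1+\beta)t_k$.

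Parts (c) and (d) are then essentially immediate. For (c), I would check that Assumption~\ref{assum:step_error_sub}a holds for the sequences $\{\gamma_{k,i}\}$: since each $\gamma_{k,i} = r_i\delta_k^*$ with $r_i$ a fixed positive constant, it suffices that $\sum_k \delta_k^* = \infty$ and $\sum_k (\delta_k^*)^2 < \infty$; these summability properties follow from the recursion \eqref{eqn:gmink} by the same argument used in the proof of Proposition~\ref{prop:rel_bound} (the sequence behaves asymptotically like a harmonic sequence $\sim 2/((\eta-\beta L)k)$, so it is not summable but is square-summable). Combined with part (b), all hypotheses of Proposition~\ref{prop:rel_bound} are satisfied, yielding almost-sure convergence of $\{x_k\}$ to the unique solution of VI$(X,F)$. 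For (d), I would observe that the hypothesis of Proposition~\ref{prop:rec_results} requires only $e_0 > 0$, $0\le\beta<\eta/L$, and $\nu \ge L\sqrt{e_0/2}$; with $e_0 = D^2$ this last condition reads $\nu \ge DL/\sqrt{2}$, which is exactly the standing assumption on $\nu$ in the theorem, so Proposition~\ref{prop:rec_results} applies verbatim to $\delta_k^*$.

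The main obstacle is not any single deep step but rather bookkeeping: one must carefully track the algebraic identities tying the constants $c$, $\beta$, $r_i$, $e_0$, $D$, and $\nu$ together so that the particular recursion \eqref{eqn:gi0}--\eqref{eqn:gik} is correctly recognized as the scaled version of the optimal lower-bound recursion \eqref{eqn:gmin0}--\eqref{eqn:gmink}, and so that all the range constraints ($c\in(0,\eta/2)$, $r_i\in[1,1+\beta]$, $\beta\in[0,\eta/L)$, $\nu\ge DL/\sqrt2$, $\delta_k^*\le(\eta-\beta L)/((1+\beta)^2L^2)$) are simultaneously consistent. Verifying the last of these — that the adaptively generated $\delta_k^*$ actually stays within the admissible set $\mathbb{G}_k$ — reuses the monotonicity argument $0<\delta_k^*<\delta_{k-1}^*$ established inside the proof of Proposition~\ref{prop:rec_results}, together with the initial-value bound $\delta_0^*\le(\eta-\beta L)/((1+\beta)^2L^2)$, which is equivalent to $e_0\le 2\nu^2/L^2$, i.e., to $\nu\ge DL/\sqrt2$.
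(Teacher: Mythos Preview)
Your proposal is correct and follows essentially the same route as the paper: part (a) by induction (the paper packages this as Lemma~\ref{lemma:two-rec}), part (b) by identifying $\gamma_{k,i}=r_i\delta_k^*$ with $\beta=(\eta-2c)/L$ and invoking Lemma~\ref{lemma:min_maxm_relation}, part (c) via Proposition~\ref{prop:rel_bound}, and part (d) by directly checking the hypotheses of Proposition~\ref{prop:rec_results}. Two minor points: in (b) you should write $\min_i\gamma_{k,i}\ge t_k$ and $\max_i\gamma_{k,i}\le(1+\beta)t_k$ rather than equalities (no $r_i$ need hit the endpoints), though this only strengthens the verification of Assumption~\ref{assum:step_error_sub}b; and in (c) the summability of $\{\delta_k^*\}$ is not actually argued inside Proposition~\ref{prop:rel_bound} --- the paper cites an external result (Proposition~3 of~\cite{Farzad1}) --- but your harmonic-asymptotic sketch is the right substance.
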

\begin{proof}
\noindent (a) \
Consider the sequence \an{$\{\lambda_k\}$} given by
\begin{align*}
& \lambda_0=\frac{c^2}{(1+\frac{\eta-2c}{L})^2\nu^2}\,D^2,
\\
& \lambda_{k+1}=\lambda_k(1-\lambda_k) 
\quad \hbox{for all }k\ge 1.
\end{align*}
Since for any ${i=1,\ldots, N}$, we have $\lambda_0=({c}/{r_i})\,
	  \gamma_{0,i}$, using Lemma \ref{lemma:two-rec} we obtain
	  $\lambda_k=({c}/{r_i})\g_{k,i}$ for
	  all $i=1,\ldots, N$ and $k \geq 0$. Hence, the desired relation follows.

\noindent (b) \ First we show that $\delta_k^*$ and $\Gamma_k^*$
are well defined. Consider the relation of part (a). Let $k\ge 0$ be
arbitrarily fixed. If $\g_{k,i}>\g_{k,j}$ for some $i \neq j$, then we
have $r_{i}>r_{j}.$ Therefore, the minimum possible $\g_{k,i}$ {is
	obtained} with $r_i=1$ and the maximum possible $\g_{k,i}$ {is
		obtained} with $r_i=1+\frac{\eta-2c}{L}$. Now, consider
		\eqref{eqn:gi0}--\eqref{eqn:gik}. If, $r_i=1$, and 
		$D^2$ is replaced by $e_0$, and $c$ by $\frac{\eta-\beta L}{2}$, we get the
		same recursive sequence defined by
		\eqref{eqn:gmin0}--\eqref{eqn:gmink}. Therefore, since the
		minimum possible $\g_{k,i}$ {is achieved} when $r_i=1$, we
		conclude that $\delta_k^* \leq \min_{i=1,\ldots,N} \g_{k,i}$ for
		any $k\ge 0$. This shows that $\delta_k^*$ is well-defined in
		the context of Assumption \ref{assum:step_error_sub}b.
		Similarly, it can be shown that $\Gamma_k^*$ is also
		well-defined in the context of  Assumption
		\ref{assum:step_error_sub}b. Now, Lemma~\ref{lemma:min_maxm_relation} implies that
		$\Gamma_k^*=(1+\frac{\eta-2c}{L})\delta_k^*$ for any $k \geq 0$,
		which shows that the inequality in Assumption~\ref{assum:step_error_sub}b is satisfied 
		with $\beta=\frac{\eta-2c}{L}$, 
		\an{where $0\le \beta<\frac{\eta}{L}$ since {$0<c\leq \frac{\eta}{2}$}.}

\noindent (c) \ In view of Proposition~\ref{prop:rel_bound}, to show the almost-sure convergence, 
it suffices to show that Assumption~\ref{assum:step_error_sub} holds.  Part (b) implies that 
Assumption~\ref{assum:step_error_sub}b is satisfied by the given stepsize choices. 
As seen in Proposition 3 of \cite{Farzad1}, 
Assumption~\ref{assum:step_error_sub}a holds for any positive recursive sequence $\{\lambda_k\}$ 
of the form $\lambda_{k+1}=\lambda_k(1-a\lambda_k)$. Since
each sequence $\g_{k,i}$ is a recursive sequence of this form, Assumption
\ref{assum:step_error_sub}a follows from Proposition~3 in \cite{Farzad1}.

\noindent (d) \ It suffices to show that the hypotheses of Proposition
	\ref{prop:rec_results} hold when $e_0=D^2$ and $\beta=\frac{\eta
		-2c}{L}$. Relation $\nu \geq \frac{DL}{\sqrt{2}}$ follows from 
$\nu \geq L\sqrt{\frac{e_0}{2}}$. Also, as mentioned in part (c), since $0<c\leq \frac{\eta}{2}$, the relation $0\le \beta<\frac{\eta}{L}$ holds for any choice of $c$ within that range. Therefore, the conditions of Proposition~\ref{prop:rec_results} are satisfied.
\end{proof}

{\textbf{Remark:} Theorem \ref{prop:DASA} provides a class of adaptive
stepsize rules for the distributed SA algorithm
(\ref{eqn:algorithm_different}), i.e., for any choice of parameter $c$
such that $0<c\leq\frac{\eta}{2}$, relations
\eqref{eqn:gi0}--\eqref{eqn:gik} correspond to an adaptive stepsize rule
for agents $1, \hdots, N$. Note that if $c=\frac{\eta}{2}$, these
adaptive rules will represent the centralized adaptive scheme given by
\eqref{eqn:optimal_self_adaptive_2}--\eqref{eqn:optimal_self_adaptive}. \hfill $\square$

In a distributed setting, each agent can choose its corresponding
parameter $r_i$ from the specified range $[1, 1+\frac{\eta-2c}{L}]$.
This requires that all agents agree on a fixed parameter $c$ and have a
common estimate of parameters $\eta$ and $L$. Yet, this scheme does not
allow complete flexibility for the agents and requires some global
specification of parameters such as $\eta, L,$ and $c$. In the next
section, we address the setting where the
Lipschitz constant is unavailable in a global setting or when the
mapping $F$ may not be Lipschitzian are addressed.}
\section{Non-Lipschitzian mappings and local randomization}\label{sec:convergence-no-Lip} 
A key shortcoming of the proposed DASA scheme, {given by
	(\ref{eqn:gi0})-(\ref{eqn:gik})}, is the requirement of  
the Lipschitzian property of the mapping $F$ with a
known parameter $L$. However, in a range of problem settings, the
following may arise:\\
\noindent \textbullet \, {\em Unavailability of a Lipschitz constant:} In many
settings, either the mapping may be non-Lipschitzian or the
estimation of such a constant may be problematic. It may also be that
this constant may not be available across the entire population of
agents.\\
\noindent  \textbullet \, {\em  Nonsmoothness in payoffs:} Suppose  the Cartesian stochastic variational
	inequality problem represents the optimality conditions of a
	stochastic convex program with nonsmooth (random) objectives or the
	equilibrium conditions of a stochastic Nash game in which the payoff
	functions are expectation-valued with random nonsmooth integrands.
	In either setting, the integrands associated with each component's
	expectation are multi-valued. In such a setting,   a
randomization or smoothing technique applied to each agent's payoff which leads to
an approximate mapping that can be shown to be Lipschitz and
single-valued. The associated Lipschitz constant can be specified in
terms of problem parameters and smoothing specifications, allowing us to
develop a locally randomized SA algorithm for stochastic variational inequalities without
Lipschitzian mappings. 

In Section~\ref{sec:smoothing}, we present the rudiments of our
randomization approach and discuss its {generalizations} in
Section~\ref{sec:dist_smoothing}. Finally, in
Section~\ref{sec:conv_smoothing}, we present a distributed locally
randomized SA scheme and provide suitable convergence theory.

\subsection{A {randomized} smoothing technique}\label{sec:smoothing}
In this part, we revisit a smoothing technique that has its roots
	in work by Steklov~\cite{steklov1,steklov2} in 1907. Over the
	years, it has been used by Bertsekas~\cite{Bertsekas72},
	Norkin~\cite{norkin93optimization} and more recently Lakshmanan and
		De Farias~\cite{DeFarias08}. The following proposition
		in~\cite{Bertsekas72} presents this smoothing technique for a
		nondifferentiable convex function.
\begin{proposition}\label{prop:bertsekas-approx}
Let $f:\Real^n \rightarrow \Real$ be a convex function and consider the
	function $f^{\epsilon}(x)$
\begin{equation*}
f^{\epsilon}(x) \triangleq \EXP{f(x-\omega)},
\end{equation*}
where $\omega$ belongs to the probability space $(\Real^n, B_n,P)$,
	$B_n$ is the $\sigma-$algebra of Borel sets of $\Real^n$ and
		  $P$ is a probability measure on {$B_n$} which is absolutely
		  continuous with respect to Lebesgue measure restricted on
		  $B_n$. Then, if $\EXP{f(x-\omega)}< \infty$ for all $x \in
		  \Real^n$, the function $f^{\e}$ is everywhere differentiable. 
\end{proposition}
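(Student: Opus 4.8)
The plan is to show that the convolution of a convex (hence locally Lipschitz) function $f$ with an absolutely continuous probability measure $P$ is everywhere differentiable, by reducing to the classical fact that a convolution inherits the smoothness of the smoother of its two factors. First I would rewrite $f^{\epsilon}(x) = \EXP{f(x-\omega)} = \int_{\Real^n} f(x-\omega)\,dP(\omega)$, and use the absolute continuity of $P$ with respect to Lebesgue measure to write $dP(\omega) = p(\omega)\,d\omega$ for a nonnegative density $p \in L^1(\Real^n)$ (Radon--Nikodym). Then a change of variables $y = x-\omega$ gives $f^{\epsilon}(x) = \int_{\Real^n} f(y)\, p(x-y)\,dy$, exhibiting $f^{\epsilon}$ as the convolution $f * p$. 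The point is that $f^\epsilon$ is differentiable because the density $p$ can be "differentiated against," not $f$.

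The main technical content is the justification of differentiating under the integral sign, and this is where I expect the real work to lie. The subtlety is that $p$ need not itself be differentiable, so the argument cannot be the naive "move $\nabla_x$ inside and hit $p(x-y)$." Instead I would argue as follows: since $f$ is finite convex on $\Real^n$, it is locally Lipschitz, so on any ball it is bounded and Lipschitz; combined with $\EXP{|f(x-\omega)|}<\infty$ (which holds for all $x$ by hypothesis) and local integrability of $p$, one shows $f^\epsilon$ is itself finite and locally Lipschitz, hence (by Rademacher) differentiable almost everywhere. To upgrade "a.e." to "everywhere," I would use convexity of $f^\epsilon$: $f^\epsilon$ is convex because it is an average of the convex functions $x \mapsto f(x-\omega)$. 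A finite convex function on $\Real^n$ that is differentiable on a dense set is differentiable everywhere if and only if its subdifferential is single-valued everywhere; here one shows the subdifferential mapping of $f^\epsilon$ is single-valued by an absolute-continuity argument — the "kink set" of $f$ (the set where $f$ fails to be differentiable) has Lebesgue measure zero, and translating it by the absolutely continuous variable $\omega$ and integrating washes out the nondifferentiability. Concretely, for directional derivatives one has $(f^\epsilon)'(x;d) = \int_{\Real^n} f'(x-\omega;d)\, dP(\omega)$ by the dominated convergence theorem (difference quotients of a convex function are monotone in the step size and dominated by the local Lipschitz constant), and since $f'(x-\omega; \cdot)$ is linear in $d$ for $P$-almost every $\omega$ (because $\{\omega : x-\omega \text{ is a nondifferentiability point of } f\}$ has Lebesgue measure zero, hence $P$-measure zero), the integral is linear in $d$; a convex function whose directional derivative at $x$ is linear in $d$ is differentiable at $x$.

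So the skeleton is: (1) express $f^\epsilon$ as $f*p$ and note finiteness and convexity; (2) establish the directional-derivative interchange $(f^\epsilon)'(x;d) = \EXP{f'(x-\omega;d)}$ via monotone difference quotients and local Lipschitz domination; (3) invoke that the nondifferentiability set of a finite convex $f$ is Lebesgue-null, so absolute continuity of $P$ forces $f'(x-\omega;\cdot)$ to be linear for $P$-a.e.\ $\omega$; (4) conclude $(f^\epsilon)'(x;\cdot)$ is linear, i.e.\ $f^\epsilon$ is differentiable at the arbitrary point $x$. The hardest step is (2)--(3): making the interchange of limit and expectation rigorous without assuming smoothness of $p$, and correctly invoking the measure-zero property of the kink set of a convex function together with the absolute continuity hypothesis. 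Everything else is bookkeeping with convex-analysis facts (local Lipschitzness, monotonicity of difference quotients, the directional-derivative characterization of differentiability) that may be quoted from a standard reference.
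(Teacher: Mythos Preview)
The paper does not give its own proof of this proposition: it is quoted verbatim from Bertsekas~(1972) and followed immediately by an illustrative example, with no argument supplied. So there is nothing in the paper to compare your proposal against.

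That said, your sketch is the standard convex-analysis argument and is essentially what Bertsekas does. The crucial ingredients --- convexity of $f^\epsilon$ as an average of convex functions, the interchange $(f^\epsilon)'(x;d)=\EXP{f'(x-\omega;d)}$ justified by monotonicity of convex difference quotients, the fact that a finite convex function on $\Real^n$ is differentiable off a Lebesgue-null set, and the use of absolute continuity of $P$ to conclude that $f'(x-\omega;\cdot)$ is linear for $P$-a.e.\ $\omega$ --- are all correct and in the right order. One minor point worth tightening in a full write-up: the hypothesis $\EXP{f(x-\omega)}<\infty$ for \emph{all} $x$ is what lets you dominate the difference quotients by integrable quantities of the form $f(x+d-\omega)-f(x-\omega)$ and $f(x-\omega)-f(x-d-\omega)$, so the dominated/monotone convergence step in (2) goes through without any extra assumption on $p$ or on moments of $\omega$; you gesture at this but it is the place where the global finiteness hypothesis is actually used.
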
 
This technique has been employed in a number of papers such
	as~\cite{Gupal79,DeFarias08,Farzad1} to transform $f$ into a smooth
	function. In~\cite{DeFarias08}, authors consider a Gaussian
	distribution for the smoothing distribution and show that when
	function $f$ has bounded subgradients, the smooth function $f^{\e}$
	has Lipschitz gradients with a prescribed Lipschitz constant. A
	challenge in that approach is that in some situations, function $f$
	may have a restricted domain and not be defined for some
	realizations of the Gaussian random variable.
	
	Motivated by this challenge,
	   in~\cite{Farzad1}, we consider the randomized smoothing technique
		   using uniform random variables defined on an $n$-dimensional
		   ball centered on origin with radius $\e>0$. This approach is
		   called ``locally randomized smoothing technique'' and is used
		   to establish a local smoothing SA algorithm for solving
		   stochastic convex optimization problems in~\cite{Farzad1}. We
		   intend to extend this smoothing technique to the regime
		   of solving {stochastic Cartesian} variational inequality problems and exploit
		   the Lipschitzian property of the approximated mapping. In the
		   following example, we demonstrate how the smoothing technique
		   works for a piecewise linear function.		   
	\begin{figure}[htb]
 \centering
 \subfloat[The original function $f(x)$]
{\label{fig:originalf}\includegraphics[scale=.33, angle=-90]{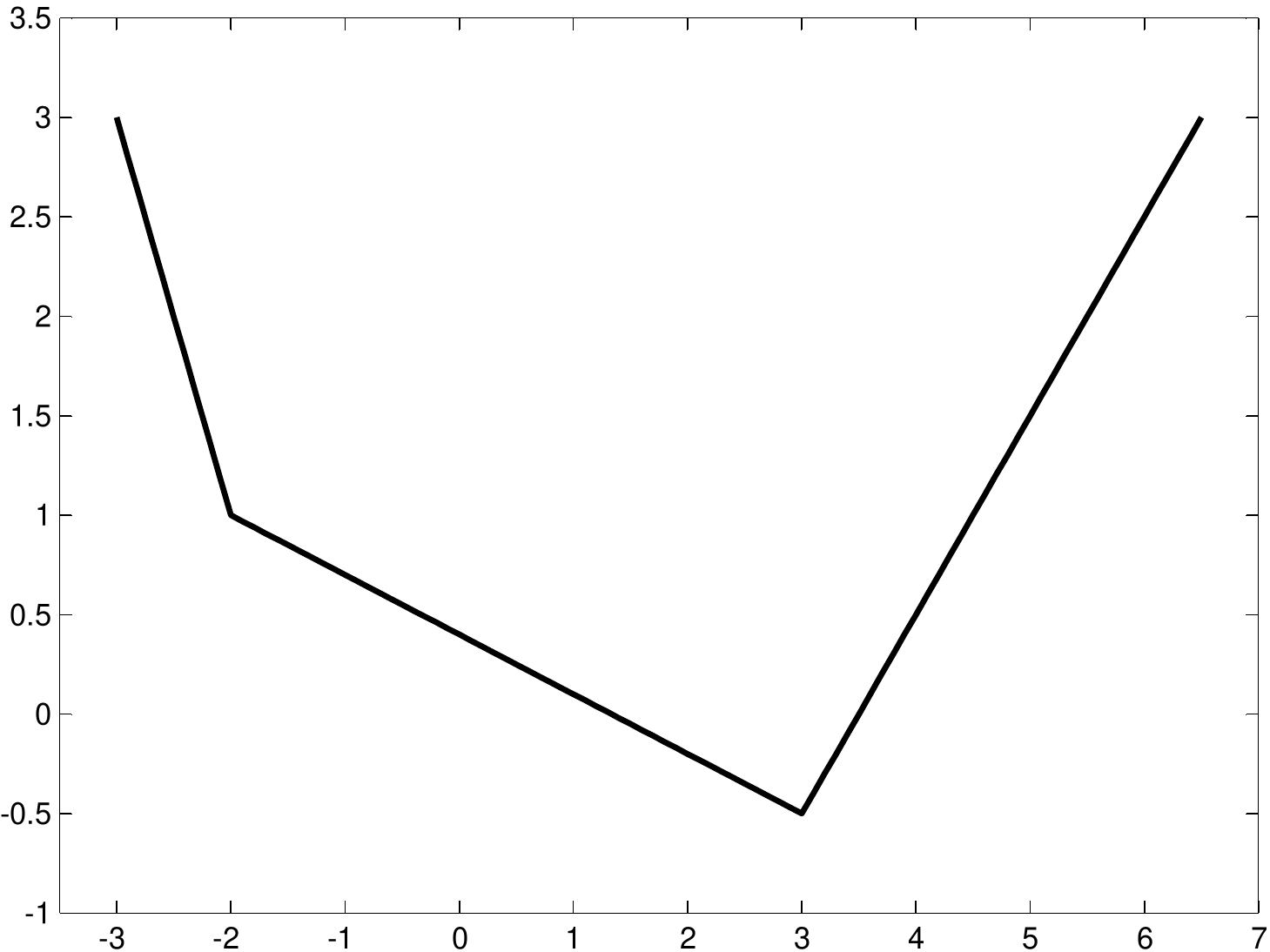}}
 \subfloat[The smoothed function $f^\e(x)$]{\label{fig:fhat}\includegraphics[scale=.33, angle=-90]{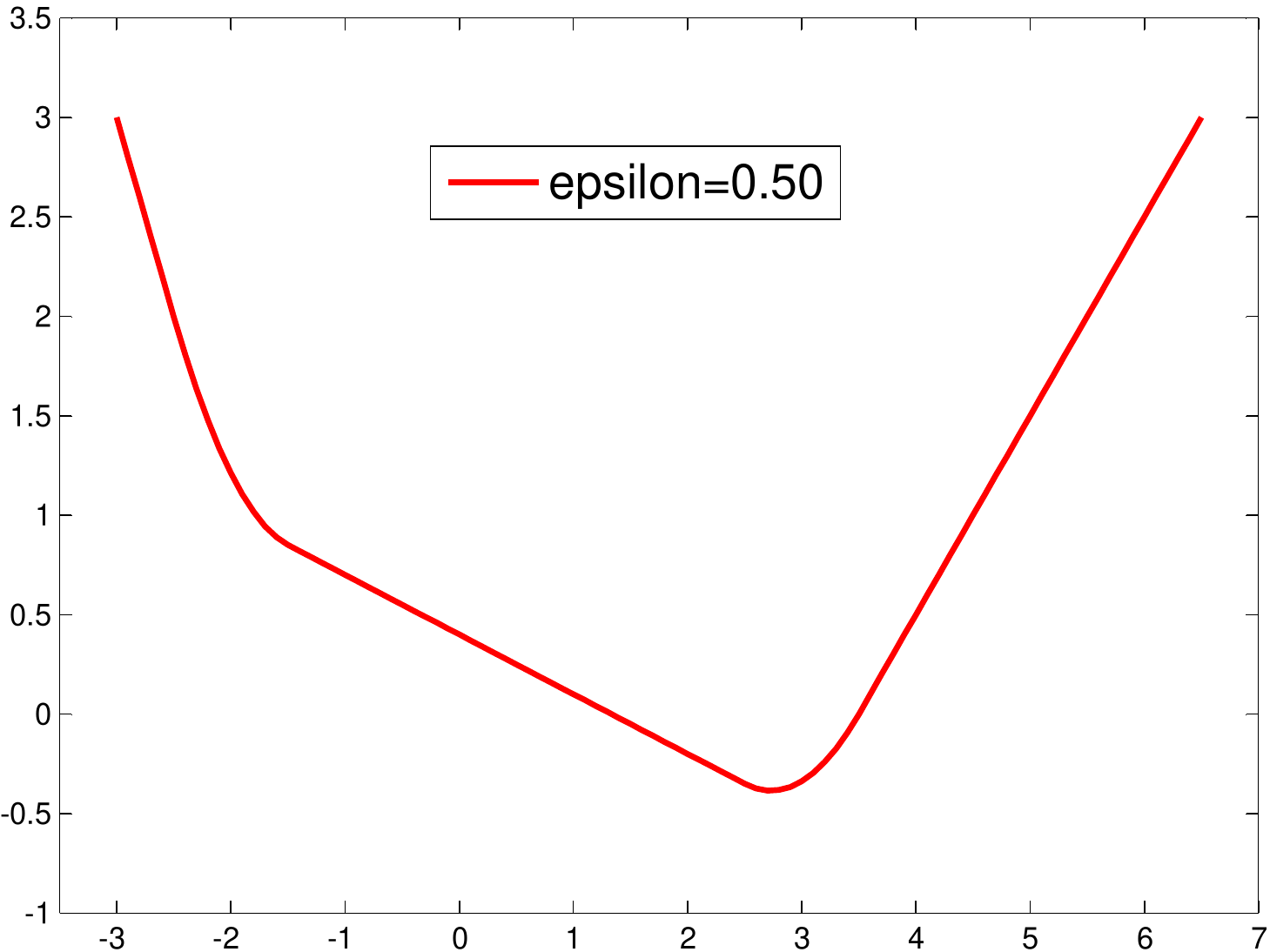}}
 \subfloat[Different smooth. parameters]{\label{fig:epsChange}\includegraphics[scale=.33, angle=-90]{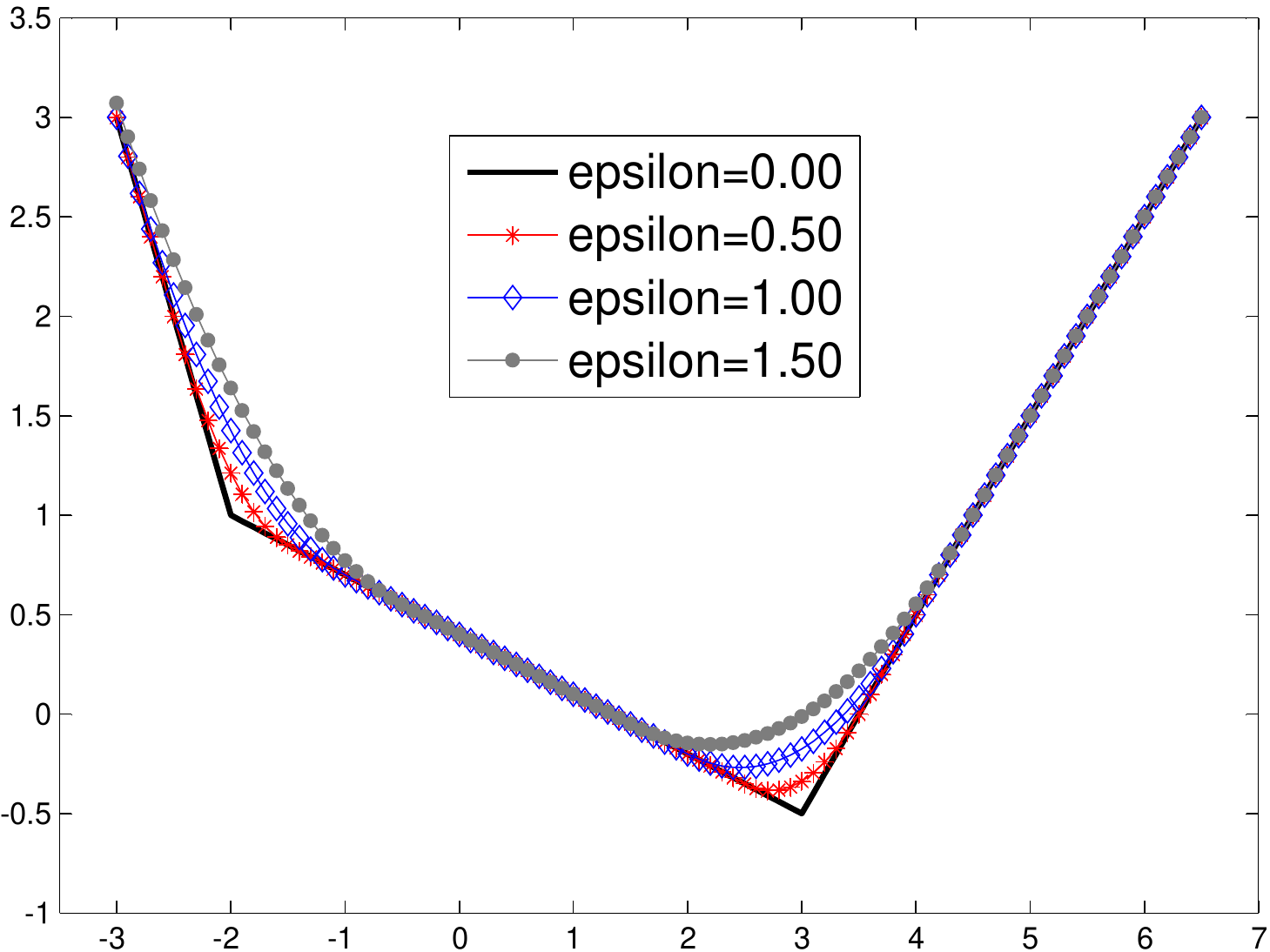}}
\caption{The smoothing technique}
\label{fig:smoothing}
\end{figure}

\begin{example}[Smoothing of a convex function]
{Consider the following piecewise linear function
\begin{align*}
f(x)=\left\{ \begin{array}{ll}
 -2x-3& \hbox{for }x < -2,\cr
-0.3x+0.4&\hbox{for }-2 \leq x <3\cr 
 x-3.5&\hbox{for } x\geq 3.
\end{array}\right.
\end{align*}
Suppose that $z$ is a uniform random variable defined on $[-\e,\e]$
where $\e>0$ is a given parameter. Consider the approximation function
$f^\e = \EXP{f(x+z)}$. Proposition~\ref{prop:bertsekas-approx} implies
that $f^\e$ is a smooth function. When $\e$ is a fixed constant
	satisfying $0< \e \leq 2.5$, the smoothed function $f^\e$ has the following form:
\begin{align*}
f^\e(x)=\left\{ \begin{array}{ll}
 -2x-3& \hbox{for }x < -2-\e,\cr
  \frac{1}{40\epsilon}\left(17x^2+68x-46x\epsilon+68-52\epsilon+17\epsilon^2\right)&\hbox{for } -2-\e \leq x <-2+\e ,\cr
  -0.3x+0.4&\hbox{for } -2+\e \leq x <3-\e ,\cr
 \frac{1}{40\epsilon}\left(13x^2-78x+14x\epsilon+117-62\epsilon+13\epsilon^2\right)&\hbox{for } 3-\e \leq x <3+\e ,\cr 
x-3.5& \hbox{for }x\geq 3+\e.
\end{array}\right.
\end{align*}
Figure \ref{fig:smoothing} shows such
a smoothing scheme. In Figure \ref{fig:originalf}, we observe that
function $f$ is nonsmooth at $x=-2$ and $x=3$. Figure \ref{fig:fhat}
shows the approximation $f^\e$ when $\e=0.5$. An immediate
	observation is that function $f^\e$ is smooth everywhere.
	Furthermore, the smoothing technique perturbs $x$
	locally at all points, including points of nonsmoothness. Finally, Figure~\ref{fig:epsChange}
shows the smoothing scheme for different values of $\e$ and illustrates
	the exactness of the approximation as $\e \to 0$. } 
\end{example}
\subsection{Locally randomized techniques}\label{sec:dist_smoothing}
Motivated by the smoothing technique described in previous part, we
introduce two distributed smoothing schemes where we simultaneously
perturb the value of vectors $x_i$ with a random vector $z_i$ for
$i=1,\ldots,N$. The first scheme is called a {\em multi-spherical randomized
(MSR) scheme}, where each random vector $z_i\in \Real^{n_i}$ is uniformly
distributed on the $n_i$-dimensional ball centered at the origin with
radius $\e_i$. In the second scheme, called {\em a multi-cubic randomized (MCR)
scheme}, we let $z_i\in \Real^{n_i}$ be uniformly
distributed on the $n_i$-dimensional cube centered at the origin with an
edge length of $2\e_i$. 

Now, consider a mapping $F$ 
that is not necessarily Lipschitz. We begin by
defining an approximation $F^\e:X \to \Real^n$ as the expectation of $F(x)$ when $x$ is perturbed
by a random vector \sfy{$z=(z_1;\ldots;z_N)$}. Specifically, $F^\e$ is given by
\begin{align}\label{eqn:DLRT-def} F^\e(x) \triangleq \pmat{\EXP{
F_1(x+z)} \\ \vdots \\ \EXP{F_N(x+z)}} \qquad \hbox{for all }x \in X,
	\end{align} 
	where $F_1,\ldots, F_N$ are coordinate-maps of $F$, 
	\sfy{$z=(z_1;\ldots;z_N)$} and the random vectors $z_i$ are given by MSR or MCR
		scheme.  

\subsubsection{{Multi-spherical randomized} smoothing}
Let us define $B_n(x,\rho) \subset \mathbb{R}^n$ as a ball centered at
	a point $x$ with a radius $\rho>0$. More precisely,
\[B_n(x,\rho)\triangleq \{y \in \mathbb{R}^n \mid  \|y-x\| \leq \rho \}.\]
In this scheme, assume that for all $i=1,\ldots,N$ random vector $z_i
\in B_{n_i}(0,\e_i)$ is uniformly distributed and {independent with
	respect to random vectors $z_j$ for $j\neq i$}. For the approximation mapping $F^\e$  to be well-defined,
	$F$ needs to be defined
	 over the set $X_S^{\e}$ given by 
	 $$X_s^{\e} \triangleq X+\prod_{i=1}^N B_{n_i}(0,\e_i).$$ 
	 This means that $X_s^{\e}=\{(x_1+z_1,\ldots,x_N+z_N)\an{\mid} x \in X, z_i \in
		\Real^{n_i}, \|z_i\| \leq \e_i\, \hbox{ for all }
	i=1,\ldots,N\},$ where the constants $\e_i >0$ are given values and 
	$\e\triangleq(\e_1,\ldots,\e_N).$ Note that the subscript $s$ stands for the MSR scheme.
		This scheme is developed based on the following assumption.
		
\begin{assumption}\label{assump:DRSS-bounded}
The mapping $F:X_s^{\e}\to\Real^n$ is bounded over the set \sfy{$X_s^{\e}$}. In particular, 
for every $i=1,\ldots,N$, there exists a constant $C_i>0$ such that  
$\| F_i(x)\| \leq C_i$ for all $x\in X_s^{\e}$.
\end{assumption}
Under this assumption, we will show that the smoothed mapping $F^\e$ produced by the 
MSR scheme is Lipschitz continuous over $X$ and we will compute its Lipschitz constant. 
To do so, we make use of the following  lemma. 

\begin{lemma}\label{lemma:spherical_Lipschitz_ineq}
Let $z \in \mathbb{R}^n$ be a random vector generated from a uniform density with zero mean over an 
$n$-dimensional ball centered at the origin with a radius $\e$. Then, the following relation holds:
\begin{equation*}
\int_{\mbR^n}|p{_u}(z -x)-p{_u}(z -y)|dz \le \kappa
\frac{n!!}{(n-1)!!} \,\frac{\|x-y\|}{\e}\qquad\hbox{for all }x,y\in\Real^n,
\end{equation*}
where $\kappa=1$ if $n$ is odd and $\kappa=\frac{2}{\pi}$ if $n$ is even, 
$n!!$ denotes double factorial of $n$, and 
$p_u$ is the probability density function of random vector $z$ given by
\begin{equation}\label{eqn:zuniform}
p{_u}(z) = \left\{\begin{array}{ll} \frac{1}{c_n \varepsilon^n}
&\hbox{for }z \in B_n(0,\e),\cr \hbox{} &\hbox{}\cr 0
&\hbox{otherwise,}\end{array}\right.
\end{equation}
where $c_{n} = \dfrac{\pi^\frac{n}{2}}{\Gamma(\frac{n}{2} + 1)}$,
and $\Gamma$ is the gamma function given by
\begin{eqnarray*}
\Gamma\left(\frac{n}{2}+1\right)= \left\{ \begin{array}{ll}
\left(\frac{n}{2}\right)! &\hbox{if $n$ is even,}\cr
\hbox{}&\hbox{}\cr \sqrt{\pi}\,\frac{n!!}{2^{(n+1)/2}} &\hbox{if $n$
is odd}.
\end{array}\right.
\end{eqnarray*}
\end{lemma}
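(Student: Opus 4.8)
The plan is to recognize the left‑hand side as the $L^1$‑distance between the translate of the uniform density $p_u$ by $x$ and its translate by $y$ — equivalently, up to normalization, the volume of the symmetric difference of the two balls $B_n(x,\epsilon)$ and $B_n(y,\epsilon)$ — and to evaluate it by slicing along the displacement direction. First I would substitute $u=z-x$ and set $h\triangleq y-x$, $d\triangleq\|x-y\|=\|h\|$, which turns the integral into $\int_{\mathbb{R}^n}|p_u(u)-p_u(u-h)|\,du$. Since $p_u(u)$ depends on $u$ only through $\|u\|$, it is rotation invariant; choosing an orthogonal matrix $Q$ with $Qe_1=h/d$ and substituting $u=Qv$ (Jacobian $1$) reduces the problem, without loss of generality, to $h=de_1$.

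Next I would apply Fubini, writing $u=(s,w)$ with $s\in\mathbb{R}$ and $w\in\mathbb{R}^{n-1}$, so the integral becomes $\int_{\mathbb{R}^{n-1}}\!\big(\int_{\mathbb{R}}|p_u(s,w)-p_u(s-d,w)|\,ds\big)\,dw$. For each fixed $w$, as a function of $s$ the map $p_u(\cdot,w)$ equals $\tfrac{1}{c_n\epsilon^n}$ times the indicator of the interval $[-\rho(w),\rho(w)]$ where $\rho(w)=\sqrt{\epsilon^2-\|w\|^2}$ if $\|w\|\le\epsilon$ (and the slice is empty, hence the inner integral is $0$, if $\|w\|>\epsilon$); the second term is that same indicator translated by $d$. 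The Lebesgue measure of the symmetric difference of an interval of length $2\rho(w)$ and its translate by $d$ is exactly $2\min(d,2\rho(w))$, so the inner integral equals $\tfrac{2}{c_n\epsilon^n}\min(d,2\rho(w))$ on $\{\|w\|\le\epsilon\}$ and $0$ otherwise.

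Bounding $\min(d,2\rho(w))\le d$ and integrating over $w\in B_{n-1}(0,\epsilon)$ then yields
\[
\int_{\mathbb{R}^n}|p_u(z-x)-p_u(z-y)|\,dz\ \le\ \frac{2d}{c_n\epsilon^n}\,\mathrm{vol}\big(B_{n-1}(0,\epsilon)\big)\ =\ \frac{2c_{n-1}}{c_n}\cdot\frac{\|x-y\|}{\epsilon},
\]
using $\mathrm{vol}(B_{n-1}(0,\epsilon))=c_{n-1}\epsilon^{n-1}$. It then remains to identify the constant: from $c_n=\pi^{n/2}/\Gamma(n/2+1)$ one gets $\tfrac{2c_{n-1}}{c_n}=\tfrac{2\Gamma(n/2+1)}{\sqrt{\pi}\,\Gamma((n+1)/2)}$, and the Wallis/Beta identity $\int_0^{\pi/2}\sin^n\theta\,d\theta=\tfrac{\sqrt{\pi}}{2}\,\tfrac{\Gamma((n+1)/2)}{\Gamma(n/2+1)}$ shows that $\tfrac{2c_{n-1}}{c_n}=1/\!\int_0^{\pi/2}\sin^n\theta\,d\theta$, which is precisely $\kappa\,\tfrac{n!!}{(n-1)!!}$, the factor $\kappa$ absorbing the extra $\pi/2$ present in Wallis' formula only for even $n$.

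I expect the two steps needing the most care to be (i) the symmetric‑difference‑of‑intervals identity $2\min(d,2\rho(w))$, which must cover both the overlapping case $d<2\rho(w)$ and the disjoint case $d\ge 2\rho(w)$ in one formula — this is also what makes the bound valid even when $\|x-y\|\ge 2\epsilon$, where the two $n$‑balls are disjoint — and (ii) the closed‑form reduction of $2c_{n-1}/c_n$ to $\kappa\,n!!/(n-1)!!$, which hinges on the parity‑dependent Wallis formula and the double‑factorial bookkeeping. The remaining ingredients (rotation invariance of $p_u$, Fubini, and the crude estimate $\min(d,2\rho(w))\le d$) are routine.
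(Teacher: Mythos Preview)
Your argument is correct. The paper itself does not give a proof of this lemma; it simply states that ``the result is shown within the proof of Lemma~8 in the extended version of~\cite{Farzad1},'' so there is no in-paper proof to compare against. Your slicing-along-the-displacement-direction computation, which reduces the $L^1$ distance to $\tfrac{2c_{n-1}}{c_n}\cdot\tfrac{\|x-y\|}{\e}$ and then identifies the constant via the Wallis formula, is a clean and standard way to establish the bound; it is also in the same spirit as the paper's own proof of the cubic analogue (Lemma~\ref{lemma:cubic_Lipschitz_ineq}), which likewise proceeds by computing the volume of the symmetric difference of two translated regions and bounding it linearly in $\|x-y\|$.
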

\begin{proof}
The result is shown within the proof
	of Lemma 8 in the extended version of~\cite{Farzad1}.
\end{proof}
We next provide the main result of this subsection, which establishes the Lipschitz
	continuity and boundedness properties of the approximation mapping
		$F^\e$. It also provides the Lipschitz constant of $F^\e$ for the MSR
		scheme in terms of problem parameters. 

\begin{proposition}[Lipschitz continuity and boundedness of $F^\e$
	under the MSR scheme]\label{lemma:DRSS-Lipschitz}
 Let Assumption~\ref{assump:DRSS-bounded} hold and define vector $C\triangleq (C_1,\ldots,C_N)$. Then, for any $\e=(\e_1,\ldots,\e_N)>0$  we have the following:
 \begin{itemize}
\item [(a)] $F^\e$ is bounded over the set $X$, i.e,  $\| F^\e(x)\| \leq \|C\|$ for all $x \in X$.  
\item [(b)] $F^\e$ is Lipschitz continuous over the set $X$. More precisely, we have
\begin{align}\label{dist_Lip}\| F^\e(x) - F^\e(y)\| \leq  {\sqrt{N}}\|C\|
\max_{j\in \{1,\ldots,N\}}\left\{{\kappa_j}\frac{n_j!!}{(n_j-1)!!}\,\frac{1}{\e_j}\right\}\|x-y\|
	\qquad \hbox{for all } x, y \in X,\end{align} 
where $\kappa_j=1$ when $n_j$ is odd and $\kappa_j=\frac{2}{\pi}$ when $n_j$ is even.
\end{itemize}
\end{proposition}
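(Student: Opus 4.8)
The plan is to prove part (a) by a direct bounding argument on the definition~\eqref{eqn:DLRT-def}, and part (b) by expressing each component difference $F_i^\e(x)-F_i^\e(y)$ as an integral against the difference of two shifted densities, then applying Lemma~\ref{lemma:spherical_Lipschitz_ineq}.

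For part (a), I would note that $F^\e_i(x) = \EXP{F_i(x+z)}$ where $z=(z_1;\ldots;z_N)$ and the $z_i$ are independent uniform vectors on the balls $B_{n_i}(0,\e_i)$. Since $x\in X$ implies $x+z\in X_s^{\e}$ with probability one, Assumption~\ref{assump:DRSS-bounded} gives $\|F_i(x+z)\|\le C_i$ almost surely, hence by Jensen's inequality $\|F_i^\e(x)\|=\|\EXP{F_i(x+z)}\|\le\EXP{\|F_i(x+z)\|}\le C_i$. Stacking over $i$ yields $\|F^\e(x)\|^2=\sum_{i=1}^N\|F_i^\e(x)\|^2\le\sum_{i=1}^N C_i^2=\|C\|^2$, which is the claim.

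For part (b), the key observation is that $F_i^\e(x)$ depends on $x$ only through a convolution: writing $p_u^{(j)}$ for the density~\eqref{eqn:zuniform} on $B_{n_j}(0,\e_j)$ and using independence of the $z_j$, we have $F_i^\e(x)=\int F_i(t)\,\prod_{j=1}^N p_u^{(j)}(t_j-x_j)\,dt$. Subtracting the analogous expression for $y$, inserting a telescoping sum over the coordinate blocks $j=1,\ldots,N$ to replace the full product difference $\prod_j p_u^{(j)}(t_j-x_j)-\prod_j p_u^{(j)}(t_j-y_j)$ by a sum of terms each differing in a single block, and using the boundedness $\|F_i(t)\|\le C_i$ together with the fact that each $p_u^{(j)}$ integrates to one, I get
\[
\|F_i^\e(x)-F_i^\e(y)\|\le C_i\sum_{j=1}^N\int_{\Real^{n_j}}\bigl|p_u^{(j)}(s-x_j)-p_u^{(j)}(s-y_j)\bigr|\,ds.
\]
Applying Lemma~\ref{lemma:spherical_Lipschitz_ineq} in dimension $n_j$ bounds the $j$-th integral by $\kappa_j\frac{n_j!!}{(n_j-1)!!}\frac{\|x_j-y_j\|}{\e_j}$, and since $\|x_j-y_j\|\le\|x-y\|$, each block contributes at most $\kappa_j\frac{n_j!!}{(n_j-1)!!}\frac{1}{\e_j}\|x-y\|\le L_{\max}\|x-y\|$ with $L_{\max}\triangleq\max_j\{\kappa_j\frac{n_j!!}{(n_j-1)!!}\frac{1}{\e_j}\}$. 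Hence $\|F_i^\e(x)-F_i^\e(y)\|\le C_i\,N\,L_{\max}\|x-y\|$. Finally, I would assemble the block norms: $\|F^\e(x)-F^\e(y)\|^2=\sum_{i=1}^N\|F_i^\e(x)-F_i^\e(y)\|^2$; here I need to be careful to land on the stated constant $\sqrt{N}\|C\|L_{\max}$ rather than $N\|C\|L_{\max}$, so instead of the crude per-block bound above I would keep the block dependence sharper — bound $\|F_i^\e(x)-F_i^\e(y)\|\le C_i L_{\max}\sum_{j=1}^N\|x_j-y_j\|\le C_i L_{\max}\sqrt{N}\,\|x-y\|$ using Cauchy–Schwarz on the sum $\sum_j\|x_j-y_j\|$, and then $\sum_i\|F_i^\e(x)-F_i^\e(y)\|^2\le N L_{\max}^2\|x-y\|^2\sum_i C_i^2=N L_{\max}^2\|C\|^2\|x-y\|^2$, giving exactly~\eqref{dist_Lip}.

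The main obstacle I anticipate is bookkeeping the constants so that the telescoping-in-blocks step together with the two applications of Cauchy–Schwarz (once across the coordinate blocks inside each $F_i^\e$ difference, once across $i$ when reassembling the Euclidean norm of $F^\e$) produces precisely the factor $\sqrt{N}$ and not $N$ or $N^{3/2}$; the analytic content is entirely carried by Lemma~\ref{lemma:spherical_Lipschitz_ineq}, and the only subtlety is that the telescoping bound must be applied at the level of the scalar $1$-norm $\sum_j\|x_j-y_j\|$ before invoking $\|x_j-y_j\|\le\|x-y\|$ in a way compatible with Cauchy–Schwarz. Measurability and finiteness of all the integrals is immediate from boundedness of $F$ on $X_s^\e$, so no additional integrability hypotheses are needed.
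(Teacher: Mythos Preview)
Your proposal is correct and follows essentially the same route as the paper: a telescoping over the $N$ coordinate blocks, the boundedness $\|F_i\|\le C_i$ from Assumption~\ref{assump:DRSS-bounded}, and Lemma~\ref{lemma:spherical_Lipschitz_ineq} applied block by block. The only cosmetic difference is that the paper telescopes at the level of the function values (writing $\EXP{F_j(x+z)}-\EXP{F_j(y+z)}=\sum_i v_i$ and using $\|\sum_i v_i\|^2\le N\sum_i\|v_i\|^2$, then summing $\sum_i L_i^2\|x_i-y_i\|^2\le L_{\max}^2\|x-y\|^2$), whereas you telescope the density product and then use $\sum_j\|x_j-y_j\|\le\sqrt{N}\|x-y\|$; both arrive at exactly the constant $\sqrt{N}\|C\|L_{\max}$.
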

\begin{proof}
(a) \ 
	We can bound the norm of $F^\e$ as follows:
\allowdisplaybreaks\begin{align*}
\|F^\e(x)\| 
			 =  \sqrt{\sum_{i=1}^N \|\EXP{F_i(x+z)}\|^2}
			 \leq \sqrt{\sum_{i=1}^N \EXP{\|F_i(x+z)\|^2}}		
			\leq \|C\|,
\end{align*}
where the first inequality follows from Jensen's inequality and the
second inequality is due to the boundedness property imposed on $F$ by
Assumption~\ref{assump:DRSS-bounded}.  
%

\noindent (b) \ From the definition of $F^\e$ in~relation (\ref{eqn:DLRT-def}) we have
\begin{align*}
\| F^\e(x) -F^\e(y)\|^2= \sum_{j=1}^N\|\EXP{F_j(x+z) -
	F_j(y+z)}\|^2=\sum_{j=1}^N\|\EXP{F_j(x+z)-
	F_j(y+z)}\|^2.
\end{align*}
We will add and subtract, sequentially, the values $F(u)$ at the vectors $u$ of the form 
$(y_1+z_1,\ldots,y_{i-1}+z_{i-1}, x_i+z_i,\ldots, x_N+z_N)$ for $i=2, \ldots, N$.
To keep the resulting expressions in a compact form,  we use the following notation. 
For an index set $J\subseteq \{1,\ldots, N\}$, we let $x_J \triangleq (x_{i})_{i\in J}$ and $x_{-J} \triangleq (x_{i})_{i\in \{1,\ldots, N\}-J}$.
By adding and subtracting the terms {$F_j((y+z)_{\{1,\ldots,i\}},(x+z)_{-\{1,\ldots,i\}})$} for all $i$, from the preceding relation we obtain
\begin{align*}
& \quad \|F^\e(x) - F^\e(y)\|^2 = \sum_{j=1}^N\left\|\underbrace{\EXP{F_j(x+z)- F_j((y+z)_{\{1\}},(x+z)_{-\{1\}})}}_{v_1}\right.\nonumber \cr 
+& \left.\underbrace{\EXP{F_j((y+z)_{\{1\}},(x+z)_{-\{1\}})-F_j((y+z)_{\{1,2\}},(x+z)_{-\{1,2\}})}}_{v_2}\right.\nonumber\cr
& \left.\vdots \right.\nonumber\cr
+& \left.\underbrace{\EXP{F_j((y+z)_{\{1,\ldots,i-1\}},(x+z)_{-\{1,\ldots,i-1\}})-F_j((y+z)_{\{1,\ldots,i\}},(x+z)_{-\{1,\ldots,i\}})}}_{v_i}\right.\nonumber\cr
& \left.\vdots \right.\nonumber\cr
+&\left. \underbrace{\EXP{F_j((y+z)_{\{1,\ldots,N-2\}},(x+z)_{-\{1,\ldots,N-2\}})-F_j((y+z)_{\{1,\ldots,N-1\}},(y+z)_{-\{1,\ldots,N-1\}})}}_{v_{N-1}}\right.\nonumber\cr
 +&\left.
 \underbrace{\EXP{F_j((y+z)_{\{1,\ldots,N-1\}},(x+z)_{-\{1,\ldots,N-1\}})-F_j(y+z)}}_{v_N}\right\|^2.
\end{align*}
Considering the definition of the vectors {$v_1,\ldots, v_N$} in the preceding relation, 
we have
\[ \|F^\e(x) - F^\e(y)\|^2 =\sum_{j=1}^N\left\|\sum_{i=1}^N v_i\right\|^2
\le N\sum_{j=1}^N \sum_{i=1}^N\|v_i\|^2,\]
where the inequality follows by the convexity of the squared-norm. By using the definitions of $v_i$ and 
exchanging the order of summations in the preceding relation, we obtain
\begin{align}\label{equ:lips_8}
& \quad \|F^\e(x) - F^\e(y)\|^2  \leq
	{N}\underbrace{\sum_{j=1}^N\left\|\EXP{F_j((x+z)_{\{1\}},
			(x+z)_{-\{1\}})-F_j((y+z)_{\{1\}},(x+z)_{-\{1\}})}\right\|^2}_{\rm Term \,
			1} \notag \\ 
&
+{N}\sum_{i=2}^{N}
\underbrace{\sum_{j=1}^N\left\|\EXP{F_j((y+z)_{\{1,\ldots,i-1\}},(x+z)_{-\{1,\ldots,i-1\}})
-F_j((y+z)_{\{1,\ldots,i\}},(x+z)_{-\{1,\ldots,i\}})}\right\|^2}_{{\rm Term} \,
			i} .
\end{align}

Next, we derive an estimate for Term 1. From our notation, it follows
	that for a vector $x$, $x_{\{1\}}=x_{1}$. In the interest of brevity, in the following, for a vector $x$, 
	we use $x_{-1}\triangleq x_{-\{1\}}$.
	Recalling the definition of $p_u$~in~\eqref{eqn:zuniform}, we write
\begin{align*}
\hbox{Term 1}&=\sum_{j=1}^N\left\|
	\int_{\Real^{n_1}}F_j(x_1+z_1,x_{-1}+z_{-1})p_u(z_1)dz_1-\int_{\Real^{n_1}}F_j(y_1+z_1,x_{-1}+z_{-1})p_u(z_1)dz_1\right\|^2\cr
&= \sum_{j=1}^N\left\|
	\int_{\Real^{n_1}}F_j(s_1,x_{-1}+z_{-1})p_u(s_1-x_1)ds_1-\int_{\Real^{n_1}}F_j(t_1,x_{-1}+z_{-1})p_u(t_1-y_1)dt_1\right\|^2\cr
&= \sum_{j=1}^N\left\|
	\int_{\Real^{n_1}}\EXP{F_j(t_1,x_{-1}+z_{-1})}(p_u(t_1-x_1)-p_u(t_1-y_1))dt_1\right\|^2,
\end{align*}	
where in the
second equality $s_1$ and $t_1$ are given by $s_1=x_1+z_1$ and
$t_1=y_1+z_1$. Using the triangle inequality and Jensen's inequality, we obtain
\begin{align*}
\hbox{Term 1}
\leq\sum_{j=1}^N\left(\int_{\Real^{n_1}}\EXP{\|F_j(t_1,x_{-1}+z_{-1})\|}\,
|p_u(t_1-x_1)-p_u(t_1-y_1)|dt_1\right)^2.
\end{align*}
By the definition of $F_j$ and Assumption~\ref{assump:DRSS-bounded}, the preceding relation yields
\begin{align*}
\hbox{Term 1}&\leq \sum_{j=1}^N \left(\int_{\Real^{n_1}}C_j\,|p_u(t_1-x_1)-p_u(t_1-y_1)|dt_1\right)^2\cr
&\leq \left(\sum_{j=1}^N C_j^2\right)\left(\kappa_1 \frac{n_1!!}{(n_1-1)!!}\frac{1}{\e_1}\|x_{1}-y_{1}\|\right)^2,
\end{align*}
where the last inequality is obtained using Lemma \ref{lemma:spherical_Lipschitz_ineq}. 
		Similarly, we may find estimates for the other terms in relation
		   (\ref{equ:lips_8}). Therefore, from relation
		   (\ref{equ:lips_8}) we may conclude that
\begin{align*}
\| F^\e(x) - F^\e(y)\|^2 
& \le N \left(\sum_{j=1}^N C_j^2\right)
\sum_{i=1}^{N}\left(\kappa_i \frac{n_i!!}{(n_i-1)!!}\frac{1}{\e_i}\|x_{i}-y_{i}\|\right)^2
\cr & \leq  N\left( \sum_{j=1}^N C_j^2 \right) 
\left(\max_{t=1,\ldots, N} \kappa_t \frac{n_t!!}{(n_t-1)!!}\frac{1}{\e_t}\right)^2
\sum_{i=1}^{N}\|x_{i}-y_{i}\|^2\cr 
&=  N\|C\|^2\left(\max_{t=1,\ldots, N}
	\kappa_t	\frac{n_t!!}{(n_t-1)!!}\frac{1}{\e_t}\right)^2\|x-y\|^2.
\end{align*}
Therefore, we have
\begin{align*}
\|F^\e(x) - F^\e(y)\| \leq \sqrt{N}\|C\|\max_{t\in \{1,\ldots, N\}}
\left\{\kappa_t\frac{n_t!!}{(n_t-1)!!}\frac{1}{\e_t}\right\}\|x-y\|.
\end{align*}
\end{proof}
{\textbf{Remark:} The MSR scheme is a generalization of the local
	randomization smoothing scheme presented in \cite{Farzad1}. Note
		that when $N=1$, the Lipschitz constant given in Proposition~\ref{lemma:DRSS-Lipschitz}b 
		is precisely the constant given by  Lemma~8 in~\cite{Farzad1}.}\hfill$\square$

\subsubsection{{Multi-cubic randomized smoothing scheme}}
We begin by defining $C_n(x,\rho) \subset \mathbb{R}^n$ as a cube
centered at a point $x$ with the edge length  $2\rho>0$ 
where the edges are along the coordinate axes. More precisely,
\[C_n(x,\rho)\triangleq \{y \in \mathbb{R}^n \mid \|y-x\|_\infty \leq \rho  \}.\]
In the MCR scheme, we assume that for any $i=1,\ldots,N$, the random
vector $z_i$ is uniformly distributed on the set $C_{n_i}(0,\e_i)$ { and
	is independent of the other random vectors $z_j$ for
		$j\neq i$}. 
		For the mapping $F$ we will assume that 
		it is well-defined over the set $X_c^\e$ given by 
		$$X_c^\e \triangleq X+\prod_{i=1}^N C_{n_i}(0,\e_i),$$ 
	where $\e_i >0$ are given values and $\e\triangleq(\e_1,\ldots,\e_N),$ while
	the subscript $c$ stands for the MCR scheme. 
	We investigate the properties of $F^\e$ for this smoothing scheme under
	the following basic assumption.

\begin{assumption}\label{assump:DRCS-bounded}
The mapping $F:X_c^{\e}\to\Real^n$ is bounded over the set $X_c^{\e}$. Specifically, 
for every $i=1,\ldots,N$, there exists a constant $C'_i>0$ such that  
$\| F_i(x)\| \leq C'_i$ for all $x\in X_c^{\e}$.
\end{assumption}

The following lemma provides a simple relation 
that will be important in establishing the main property of the density function used in the MCR scheme.

\begin{lemma}\label{lemma:product-sum-ineq}
Let the vector $p \in \mathbb{R}^m$ be such that 
$0\leq p_i \leq 1$ for all $i=1,\ldots,m$. Then, we have
\[1-\prod_{i=1}^m(1-p_i)\leq\|p\|_1.\]
\end{lemma}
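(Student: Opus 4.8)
The plan is to prove the bound $1 - \prod_{i=1}^m (1-p_i) \le \sum_{i=1}^m p_i = \|p\|_1$ (note $\|p\|_1 = \sum_i p_i$ since $0 \le p_i \le 1$) by induction on $m$, using a one-step telescoping identity. First I would dispose of the base case $m=1$, which reads $1-(1-p_1) = p_1 \le p_1$, trivially an equality. For the inductive step, assume the claim holds for $m-1$, i.e. $1 - \prod_{i=1}^{m-1}(1-p_i) \le \sum_{i=1}^{m-1} p_i$, and write
\begin{align*}
1 - \prod_{i=1}^m (1-p_i) &= 1 - (1-p_m)\prod_{i=1}^{m-1}(1-p_i) \\
&= 1 - \prod_{i=1}^{m-1}(1-p_i) + p_m \prod_{i=1}^{m-1}(1-p_i).
\end{align*}
Since every factor $1-p_i$ lies in $[0,1]$, the product $\prod_{i=1}^{m-1}(1-p_i)$ lies in $[0,1]$, so $p_m \prod_{i=1}^{m-1}(1-p_i) \le p_m$. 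Combining this with the inductive hypothesis on the first two terms gives
\begin{align*}
1 - \prod_{i=1}^m (1-p_i) \le \sum_{i=1}^{m-1} p_i + p_m = \sum_{i=1}^m p_i = \|p\|_1,
\end{align*}
which completes the induction.

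Alternatively, and perhaps more cleanly, one can avoid induction entirely by a direct telescoping argument: write $1 - \prod_{i=1}^m(1-p_i)$ as the telescoping sum $\sum_{k=1}^m \left(\prod_{i=1}^{k-1}(1-p_i) - \prod_{i=1}^{k}(1-p_i)\right) = \sum_{k=1}^m p_k \prod_{i=1}^{k-1}(1-p_i)$ (with the empty product equal to $1$), and then bound each summand $p_k \prod_{i=1}^{k-1}(1-p_i) \le p_k$ using $0 \le \prod_{i=1}^{k-1}(1-p_i) \le 1$. Summing yields the bound.

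There is essentially no obstacle here — the only thing to be careful about is the hypothesis $0 \le p_i \le 1$, which is used twice: once to guarantee each partial product $\prod_{i \in S}(1-p_i)$ stays in $[0,1]$ (so that dropping such a factor only decreases a nonnegative quantity), and once to identify $\|p\|_1 = \sum_i p_i$ without absolute values. I would state the induction cleanly and keep the verification of the step to the two displayed lines above.
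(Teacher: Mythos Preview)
Your proof is correct and follows essentially the same approach as the paper: induction on $m$, with the base case $m=1$ trivial and the inductive step handled by an elementary algebraic manipulation (the paper multiplies the rearranged inductive hypothesis $\prod_{i=1}^{m}(1-p_i)\ge 1-\sum_{i=1}^m p_i$ through by $1-p_{m+1}\ge 0$ and drops the nonnegative cross term, whereas you add and subtract and use $\prod_{i=1}^{m-1}(1-p_i)\le 1$). Your additional direct telescoping argument is a nice touch not present in the paper.
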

\begin{proof}
We use induction on $m$ to prove this result. For $m=1$, we have
$1-\prod_{i=1}^m(1-p_i)=p_1=\|p\|_1$, implying that the result holds for
$m=1$. Let us assume that $1-\prod_{i=1}^m(1-p_i)\leq\|p\|_1$
holds for $m$. Therefore, we have 
$$\prod_{i=1}^m(1-p_i)\geq 1-\sum_{i=1}^mp_i.$$
Multiplying both sides of the preceding relation by $(1-p_{m+1})$, we
obtain $$\prod_{i=1}^{m+1}(1-p_i)\geq
(1-\sum_{i=1}^mp_i)(1-p_{m+1})=1-\sum_{i=1}^{m+1}p_i+p_{m+1}\sum_{i=1}^mp_i
\geq1-\sum_{i=1}^{m+1}p_i.$$ Hence, $\prod_{i=1}^{m+1}(1-p_i)\geq
1-\sum_{i=1}^{m+1}p_i$ which implies that the result holds for $m+1$.
Therefore, we conclude that the result holds for any integer $m \geq 1$.
\end{proof}

The following result is crucial for establishing the properties of the approximation $F^\e$ obtained by 
the MCR smoothing scheme.
\begin{lemma}\label{lemma:cubic_Lipschitz_ineq}
Let $z \in \mathbb{R}^n$ be a random vector with a zero-mean uniform density over an 
$n$-dimensional cube $\prod_{i=1}^N C_{n_i}(0,\e_i)$ for $\e_i>0$ for all $i$. 
Let the function $p_c: \mathbb{R}^n\rightarrow \mathbb{R}$ be the probability density function of 
the random vector $z$:
\begin{align*}
p_c(z)=\left\{\begin{array}{ll} \frac{1}{2^n \prod_{i=1}^N \e_i^{n_i}}
&\hbox{for }z \in \prod_{i=1}^N C_{n_i}(0,\e_i),\cr \hbox{} &\hbox{}\cr 0
&\hbox{otherwise.}\end{array}\right.
\end{align*}

Then, the following relation holds:
\begin{equation*}
\int_{\mathbb{R}^n}|p_c(u-x)-p_c(u-y)|du
\le\frac{\sqrt{n}}{\displaystyle \min_{1\le i\le N\}}\{\e_i\}}\|x-y\|\qquad\hbox{for all }x,y\in \Real^n.
\end{equation*}
\end{lemma}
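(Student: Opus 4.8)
The plan is to exploit the product structure of the density $p_c$ and reduce the $n$-dimensional $L^1$-perturbation estimate to $n$ one-dimensional ones via Lemma~\ref{lemma:product-sum-ineq}. Since $z_i$ is uniform on the cube $C_{n_i}(0,\e_i)=[-\e_i,\e_i]^{n_i}$ and the blocks $z_1,\ldots,z_N$ are mutually independent, all $n$ scalar components of $z$ are independent; the $\ell$-th component is uniform on $[-\e_{i(\ell)},\e_{i(\ell)}]$, where $i(\ell)$ is the block containing coordinate $\ell$. Hence $p_c(u)=\prod_{\ell=1}^{n} q_\ell(u_\ell)$, where $q_\ell$ is the scalar uniform density of height $\tfrac{1}{2\e_{i(\ell)}}$ supported on $[-\e_{i(\ell)},\e_{i(\ell)}]$.

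First I would use the elementary identity $\int_{\mathbb{R}^n}|p_c(u-x)-p_c(u-y)|\,du = 2\bigl(1-\int_{\mathbb{R}^n}\min\{p_c(u-x),p_c(u-y)\}\,du\bigr)$, which holds because $p_c$ integrates to $1$ and $|a-b|=a+b-2\min\{a,b\}$. Then, using the pointwise bound $\min\{\prod_\ell a_\ell,\prod_\ell b_\ell\}\ge\prod_\ell\min\{a_\ell,b_\ell\}$ for nonnegative numbers together with Tonelli's theorem, I would lower-bound the integral of the minimum by a product of one-dimensional integrals, $\int_{\mathbb{R}^n}\min\{p_c(u-x),p_c(u-y)\}\,du\ge\prod_{\ell=1}^{n}\int_{\mathbb{R}}\min\{q_\ell(t-x_\ell),q_\ell(t-y_\ell)\}\,dt$. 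A direct computation of the overlap length of two intervals of length $2\e_{i(\ell)}$ shifted by $|x_\ell-y_\ell|$ gives $\int_{\mathbb{R}}\min\{q_\ell(t-x_\ell),q_\ell(t-y_\ell)\}\,dt = 1-p_\ell$ with $p_\ell\triangleq\dfrac{\min\{|x_\ell-y_\ell|,\,2\e_{i(\ell)}\}}{2\e_{i(\ell)}}\in[0,1]$.

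Combining these yields $\int_{\mathbb{R}^n}|p_c(u-x)-p_c(u-y)|\,du\le 2\bigl(1-\prod_{\ell=1}^{n}(1-p_\ell)\bigr)$, and at this point Lemma~\ref{lemma:product-sum-ineq} gives $2\bigl(1-\prod_\ell(1-p_\ell)\bigr)\le 2\sum_{\ell=1}^{n}p_\ell$. Finally $2p_\ell\le\dfrac{|x_\ell-y_\ell|}{\e_{i(\ell)}}\le\dfrac{|x_\ell-y_\ell|}{\min_{1\le i\le N}\e_i}$, so summing over $\ell$ produces $\dfrac{\|x-y\|_1}{\min_i\e_i}$, and $\|x-y\|_1\le\sqrt{n}\,\|x-y\|$ gives the claimed bound. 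The step that needs care — and the one that makes Lemma~\ref{lemma:product-sum-ineq} the natural tool — is lower-bounding the integral of the pointwise minimum by the product of the one-dimensional integrals (and the accompanying Tonelli interchange); this is routine since all integrands are nonnegative. An alternative would be a telescoping "hybrid" argument changing one coordinate (or one block) at a time, in the spirit of the proof of Proposition~\ref{lemma:DRSS-Lipschitz}, but the route above is shorter and directly motivates the preceding lemma.
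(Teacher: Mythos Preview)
Your proof is correct and follows essentially the same route as the paper: both reduce to the bound $2\bigl(1-\prod_\ell(1-p_\ell)\bigr)$ with $p_\ell\in[0,1]$ and then invoke Lemma~\ref{lemma:product-sum-ineq}, followed by the $\|\cdot\|_1\le\sqrt{n}\,\|\cdot\|$ step. The only difference is organizational: the paper separates the cases $S_x\cap S_y=\emptyset$ and $S_x\cap S_y\ne\emptyset$ and computes the intersection volume explicitly in the second case, whereas your use of the identity $\int|f-g|=2-2\int\min\{f,g\}$ together with the pointwise bound $\min\{\prod_\ell a_\ell,\prod_\ell b_\ell\}\ge\prod_\ell\min\{a_\ell,b_\ell\}$ and the truncated $p_\ell=\min\{|x_\ell-y_\ell|,2\e_{i(\ell)}\}/(2\e_{i(\ell)})$ handles both cases simultaneously---a mild but genuine streamlining.
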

\begin{proof}
Let $x,y\in\Real^n$ be arbitrary.
To simplify the notation, we
	define sets $S_x=\prod_{i=1}^N C_{n_i}(x_i,\e_i)$ and $S_y=\prod_{i=1}^N
	C_{n_i}(y_i,\e_i)$.
We consider, separately, the case when
the cubes $S_x$ and $S_y$ do not intersect, and 
the case when they do intersect. 
Before we proceed, we prove the following relation
\begin{equation}\label{eq:cubes}
S_x\cap S_y\ne \emptyset
\quad\hbox{if and  only if} \quad\|x_i-y_i\|_\infty\le 2\e_i \ \hbox{for all }i=1,\ldots,N.
\end{equation}
To prove relation~\eqref{eq:cubes}, suppose that the two cubes have nonempty intersection and let 
$u$ be in the intersection, i.e., $u\in S_x\cap S_y$.
Then, by the triangle inequality, we have for all $i=1,\ldots, N$,
\[ \|x_i-y_i\|_\infty\le \|x_i-u_i\|_\infty + \|u_i-y_i\|_\infty\le 2\e_i,\]
where the last inequality follows from the fact that $u$ belongs to each of the two cubes.
Thus, when $S_x\cap S_y\ne\emptyset$, we have $\|x_i-y_i\|_\infty\le 2\e_i$ for all $i$.
Conversely, suppose now that $\|x_i-y_i\|_\infty\le 2\e_i$ holds for all $i=1,\ldots,N.$ Let 
$\bar u=(x+y)/2$, and note that by the convexity of the norm $\|\cdot\|_\infty$, we have
\[\|\bar u_i-x_i\|_\infty= \left\|\frac{y-x}{2}\right\|_\infty\le \frac{1}{2}\|y_i-x_i\|_\infty\le \e_i
\qquad\hbox{for all }i.\] Thus, it follows that $\bar u\in S_x$.
Similarly, we find that $\|\bar u_i-y_i\|_\infty\le\e_i$ for all $i$, which implies that $\bar u\in S_y$.
Hence, $\bar u\in S_x\cap S_y$, thus showing that the two cubes have a nonempty intersection.

\begin{figure}[htb]
 \centering
 \subfloat[MCR scheme]
{\label{fig:cubes}\includegraphics[scale=.3]{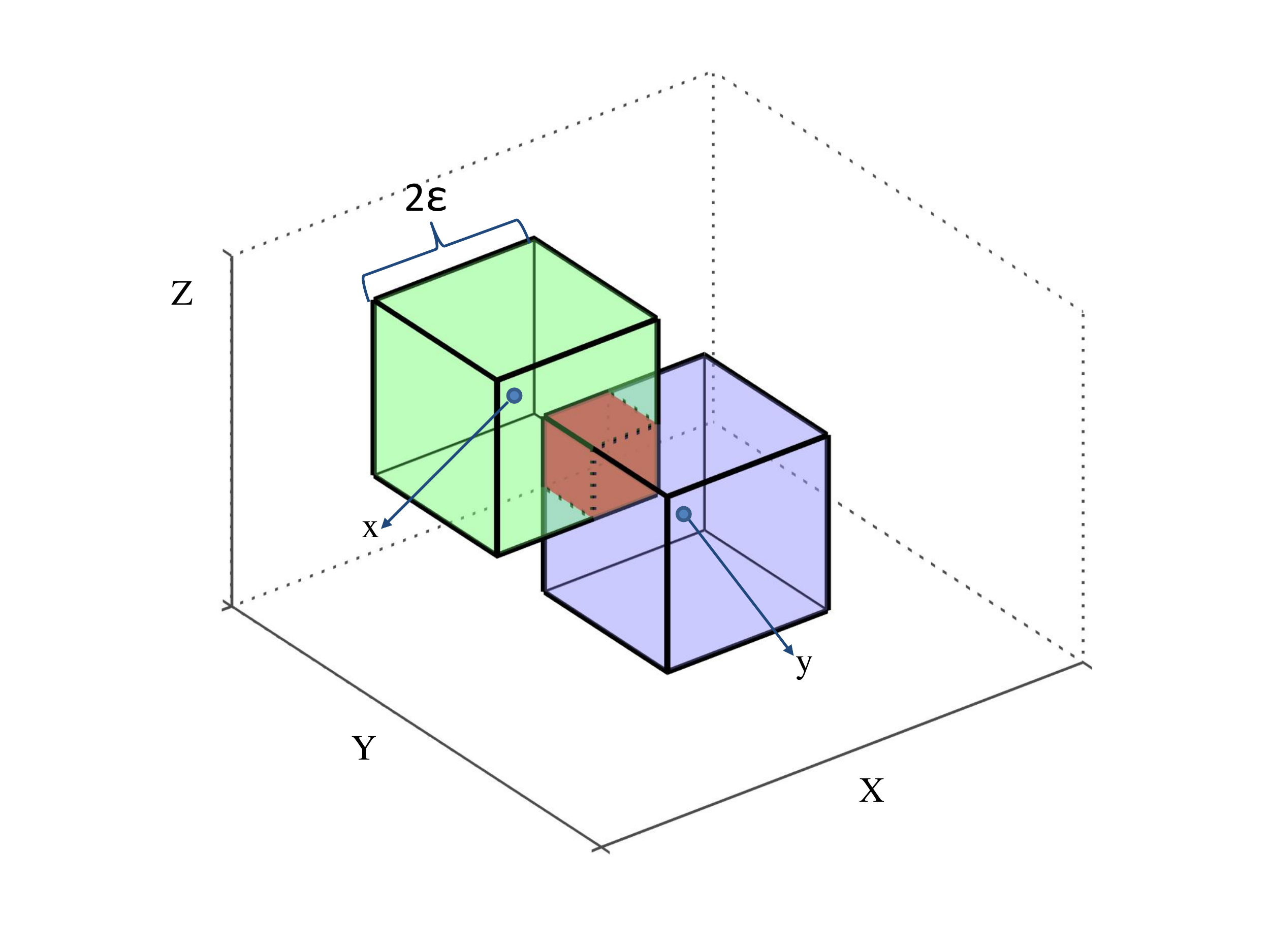}}
 \subfloat[MSR scheme]{\label{fig:spheres}\includegraphics[scale=.3]{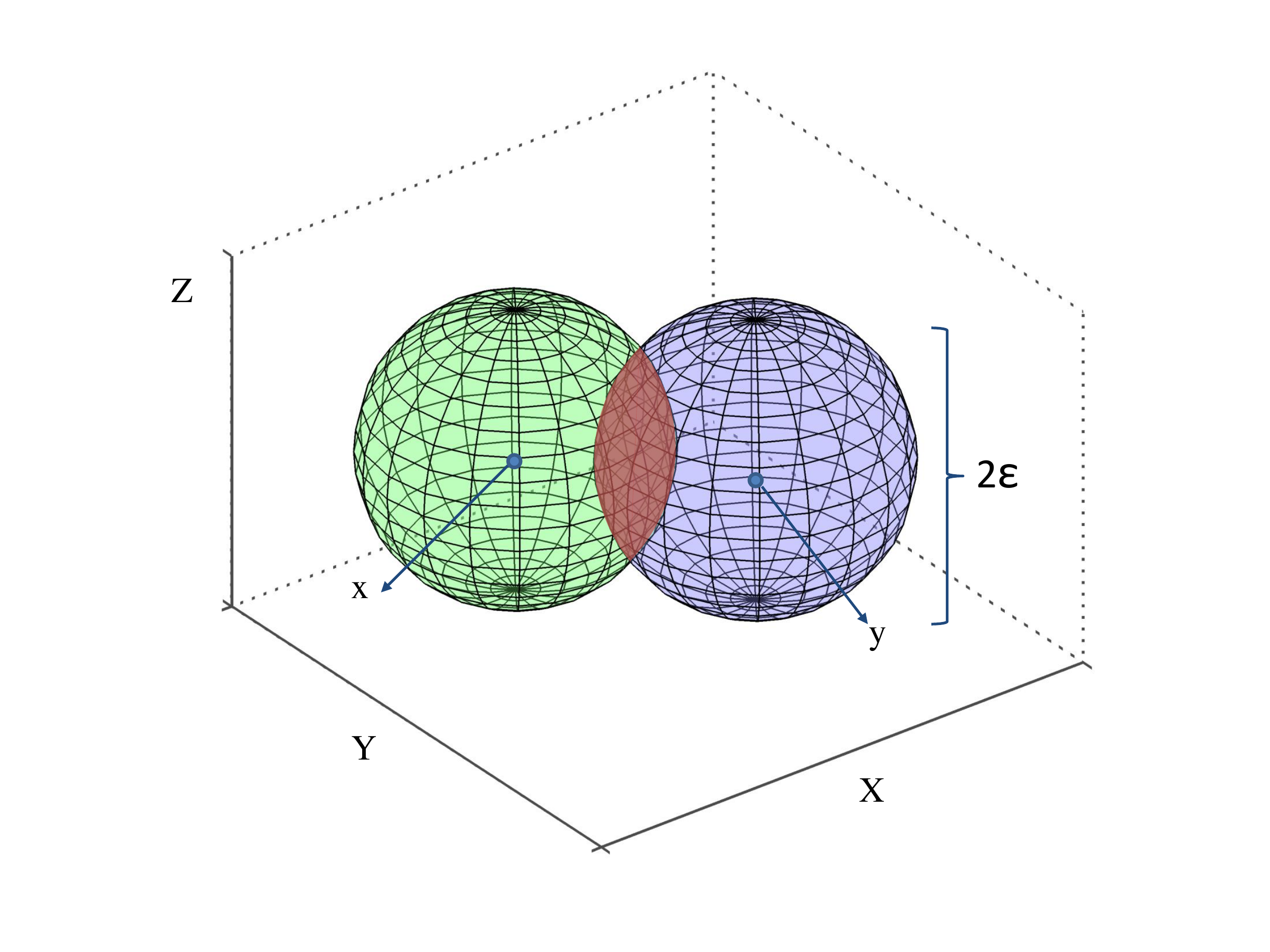}}
\caption{Calculating the Lipschitz constant in the locally randomized schemes.}
\label{fig:intersections}
\end{figure}

We now consider the integral $\int_{\mathbb{R}^n}|p_c(u-x)-p_c(u-y)|du$ for the 
cases when the cubes do not intersect and when they do intersect.\\
\noindent{\it Case 1: $S_x\cap S_y=\emptyset$.}  
In this case, we have
\begin{align*}
\int_{S_x}|p_c(u-x)-p_c(u-y)|du & = \int_{S_x}p_c(u-x)du, \\
\int_{S_y}|p_c(u-x)-p_c(u-y)|du & = \int_{S_y}p_c(u-y)du.\end{align*}
Consequently 
\begin{align}\label{eqn:DRCS-Lipschitz2}
\int_{\mathbb{R}^n}|p_c(u-x)-p_c(u-y)|du
=\int_{S_x}p_c(u-x)du +\int_{S_y}p_c(u-y)du = 2.
\end{align}
By relation~\eqref{eq:cubes}, there must exist some index $i^*\in\{1,\ldots,N\}$ such that 
$\|x_{i^*}-y_{i^*}\|_\infty>2\e_{i^*}$.
Since $\|x-y\|_\infty \ge
\|x_{i^*}-y_{i^*}\|_\infty$, it follows that
$\frac{\|x-y\|_\infty}{\min_{1\le i\le N}\{\e_{i}\}}>2$. Using the
relationship $\|u\|_\infty\le \|u\|$ between the infinity-norm and the Euclidean norm, 
we obtain $\frac{\|x-y\|}{\min_{1\le i\le N}\{\e_{i}\}}>2$. 
Therefore, using (\ref{eqn:DRCS-Lipschitz2}), we have 
 \begin{align}\label{eqn:DRCS-Lipschitz3}
\int_{\mathbb{R}^n}|p_c(u-x)-p_c(u-y)|du <
\frac{1}{\displaystyle \min_{1\le i\le N}\{\e_{i}\}}\|x-y\|.
\end{align}

\noindent{\it Case 2:  $S_x\cap S_y \ne\emptyset$.} 
 Then, we may decompose the  integral as follows: 
\begin{align*}
\int_{\mathbb{R}^n}|p_c(u-x)-p_c(u-y)|du 
&=\int_{S_x \cap S_y}|p_c(u-x)-p_c(u-y)|du + \int_{S_x^c \cap S_y^c}|p_c(u-x)-p_c(u-y)|du\cr 
&+\int_{S_x \setminus S_y}|p_c(u-x)-p_c(u-y)|du +  \int_{S_y \setminus S_x}|p_c(u-x)-p_c(u-y)|du.
\end{align*} 
Note that the first two integrals on the right hand side of the preceding equality are zero
since $p_c(u-x)=p_c(u-y)$ in the corresponding regions. 
Figure~\ref{fig:cubes} illustrates this observation\footnote{Figure~\ref{fig:spheres} provides a similar 
graphic for the MSR scheme.}. Therefore, we have
\begin{align*}
\int_{\mathbb{R}^n}|p_c(u-x)-p_c(u-y)|du 
&=\int_{S_x \setminus S_y}p_c(u-x)du+\int_{S_y \setminus S_x}p_c(u-y)du
=2\,\frac{1}{2^n \prod_{i=1}^N \e_i^{n_i}}\int_{S_x \setminus S_y}du.
\end{align*} 
Note that the value $2^n \prod_{i=1}^N \e_i^{n_i}$ is the volume of the cube $S_x$, denoted by
${\rm vol}(S_x)$. Similarly, the integral
$\int_{S_x \setminus S_y}du$ is equal to the volume of the set $S_x\setminus S_y$.
Thus, we can write
\begin{align*}
\int_{\mathbb{R}^n}|p_c(u-x)-p_c(u-y)|du 
=2\frac{ {\rm vol}(S_x \setminus S_y)} {{\rm vol}(S_x)}
=2\frac{ {\rm vol}(S_x)- {\rm vol}(S_x \cap S_y) }{{\rm vol}(S_x)}
=2\left(1-\frac{ {\rm vol}(S_x \cap S_y) }{{\rm vol}(S_x)}\right).
\end{align*} 
It can be seen that 
\[{\rm vol}(S_x \cap S_y)=\prod_{i=1}^N \prod_{j=1}^{n_i}(2\e_i-|x_i(j) - y_i(j)|),\]
where $w(j)$ denotes the $j$-th coordinate value of a vector $w$.
Therefore, from the preceding two relations and ${\rm vol}(S_x)= 2^n \prod_{i=1}^N \e_i^{n_i}$ we find that
\begin{align}\label{eqn:DRCS-Lipschitz4}
\int_{\mathbb{R}^n}|p_c(u-x)-p_c(u-y)|du
&=2\left(1-\frac{1}{2^n \prod_{i=1}^N \e_i^{n_i}}\left(\prod_{i=1}^N \prod_{j=1}^{n_i}(2\e_i-|x_i(j)-y(j)|)\right)\right)\cr
&=2\left(1-\prod_{i=1}^N \prod_{j=1}^{n_i}\left(1-\frac{|x_i(j)-y_i(j)|}{2\e_i}\right)\right).
\end{align}
Since the cubes $S_x$ and $S_y$ do intersect, 
by relation~\eqref{eq:cubes} there must hold
$\|x_{i}-y_{i}\|_\infty\le 2\e_{i}$ for all $i$.
Hence. $0 \le \frac{|x_i(j)-y_i(j)|}{2\e_i} \le 1$ for all $i$. 
Now, invoking Lemma \ref{lemma:product-sum-ineq}, from (\ref{eqn:DRCS-Lipschitz4}) we obtain
\[\int_{\mathbb{R}^n}|p_c(u-x)-p_c(u-y)|du 
\leq 2\sum_{i=1}^N \sum_{j=1}^{n_i}\frac{|x_i(j)-y_i(j)|}{2\e_i}
=\sum_{i=1}^N \frac{\|x_i-y_i\|_1}{\e_i}\le \sum_{i=1}^N \frac{\sqrt{n_i}}{\e_i}\,\|x_i-y_i\|,\]
where in the last inequality we used the relation between $\|\cdot\|_1$ and the Euclidean norm. 
Using H\"older's inequality, we have 
\[\sum_{i=1}^N \frac{\sqrt{n_i}}{\e_i}\,\|x_i-y_i\|
\le \sqrt{\sum_{i=1}^N \frac{n_i}{\e^2_i}} \ \|x-y\|
\le\frac{\sqrt{n}}{\displaystyle \min_{1\le i \le N}\{\e_i\}}\|x-y\|,\]
implying that 
\begin{align}\label{eqn:DRCS-Lipschitz5} 
\int_{\mathbb{R}^n}|p_c(u-x)-p_c(u-y)|du
\le\frac{\sqrt{n}}{\displaystyle \min_{1\le i\le N\}}\{\e_i\}}\|x-y\|.
\end{align}
By combining (\ref{eqn:DRCS-Lipschitz1}), (\ref{eqn:DRCS-Lipschitz3}),
and (\ref{eqn:DRCS-Lipschitz5}), and using the fact $n\ge1$, we obtain the desired result.
\end{proof}

Analogous to Proposition~\ref{lemma:DRSS-Lipschitz}, the next proposition
	derives the Lipschitz constant and boundedness properties of the
		approximation $F^\e$ under the MCR scheme.  
\begin{proposition}[Lipschitz continuity and boundedness of $F^\e$ under
the MCR scheme]\label{lemma:DRCS-Lipschitz}
 Let Assumption~\ref{assump:DRCS-bounded} hold and define vector $C^\prime\triangleq (C_1^\prime,\ldots,C_N^\prime)$. Then, for any $\e=(\e_1,\ldots,\e_N)>0$ 
 we have the following:
 \begin{itemize}
\item [(a)] $F^\e$ is bounded over the set $X$, i.e.,  $\| F^\e(x)\| \leq \|C^\prime\|$ for all $x \in X$.  
\item [(b)] $F^\e$ is Lipschitz over the set $X$. More precisely, we have
\begin{align}\label{dist_Lip2}\| F^\e(x) - F^\e(y)\| \leq 
\frac{\sqrt{n}\|C^\prime\|}{\min_{j=1,\ldots,N} \{\e_j\}}\|x-y\|
	\qquad \hbox{for all } x, y \in X.\end{align}
\end{itemize}
\end{proposition}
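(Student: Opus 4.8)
The plan is to follow the proof of Proposition~\ref{lemma:DRSS-Lipschitz} almost verbatim, the one genuine simplification being that Lemma~\ref{lemma:cubic_Lipschitz_ineq} already bounds the $L^1$-perturbation of the \emph{joint} density $p_c$ over the entire product cube $\prod_{i=1}^N C_{n_i}(0,\e_i)$; consequently the coordinate-by-coordinate telescoping that was needed in the spherical case (where only a single-ball inequality, Lemma~\ref{lemma:spherical_Lipschitz_ineq}, was available) can be skipped entirely. For part (a), I would fix $x\in X$, observe that $x+z\in X_c^\e$ almost surely because $z$ is supported on $\prod_{i=1}^N C_{n_i}(0,\e_i)$, and invoke Assumption~\ref{assump:DRCS-bounded} to get $\|F_i(x+z)\|\le C'_i$ a.s.; Jensen's inequality applied to $\|\cdot\|^2$ then gives $\|\EXP{F_i(x+z)}\|^2\le \EXP{\|F_i(x+z)\|^2}\le (C'_i)^2$, and summing over $i$ and taking a square root yields $\|F^\e(x)\|\le\|C'\|$, exactly as in Proposition~\ref{lemma:DRSS-Lipschitz}(a).

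For part (b), fix $x,y\in X$ and $j\in\{1,\dots,N\}$. Using the change of variables $u=x+z$ (resp.\ $u=y+z$) and the fact that $p_c$ is the density of $z$, I would write $\EXP{F_j(x+z)}=\int_{\Real^n}F_j(u)\,p_c(u-x)\,du$, so that
\[\EXP{F_j(x+z)}-\EXP{F_j(y+z)}=\int_{\Real^n}F_j(u)\bigl(p_c(u-x)-p_c(u-y)\bigr)\,du.\]
The integrand is supported on $\bigl(x+\prod_{i=1}^N C_{n_i}(0,\e_i)\bigr)\cup\bigl(y+\prod_{i=1}^N C_{n_i}(0,\e_i)\bigr)\subseteq X_c^\e$, where $\|F_j(u)\|\le C'_j$ by Assumption~\ref{assump:DRCS-bounded}; hence by the integral triangle (Jensen) inequality and Lemma~\ref{lemma:cubic_Lipschitz_ineq},
\[\|\EXP{F_j(x+z)}-\EXP{F_j(y+z)}\|\le C'_j\int_{\Real^n}|p_c(u-x)-p_c(u-y)|\,du\le C'_j\,\frac{\sqrt{n}}{\min_{1\le i\le N}\{\e_i\}}\,\|x-y\|.\]
Squaring, summing over $j=1,\dots,N$, using $\sum_{j}(C'_j)^2=\|C'\|^2$ and $n=\sum_i n_i$, and taking a square root then gives
\[\|F^\e(x)-F^\e(y)\|=\sqrt{\sum_{j=1}^N\|\EXP{F_j(x+z)}-\EXP{F_j(y+z)}\|^2}\le \frac{\sqrt{n}\,\|C'\|}{\min_{j=1,\dots,N}\{\e_j\}}\,\|x-y\|,\]
which is the asserted bound.

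The main point to note is that there is essentially \emph{no} real obstacle here: the analytic heart of the argument has already been isolated in Lemma~\ref{lemma:cubic_Lipschitz_ineq} (whose own proof handles the nonintersecting and intersecting cases of the shifted cubes via the elementary product inequality of Lemma~\ref{lemma:product-sum-ineq}). The only items requiring a line of justification are (i) finiteness of the integrals and the legitimacy of the change of variables and of pulling the norm inside the integral, both of which follow from boundedness of $F$ on $X_c^\e$ together with the compact support of $z$; and (ii) the inclusion $x+z\in X_c^\e$ for $x\in X$, which guarantees that $C'_j$ is a valid pointwise bound on $F_j$ over the relevant support of $p_c(\cdot-x)$ and $p_c(\cdot-y)$. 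If one wished to parallel the spherical proof more closely, one could instead telescope over the coordinate blocks and apply a per-block cubic inequality, but this is strictly more work and produces the same Lipschitz constant.
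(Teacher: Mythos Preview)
Your proof is correct and follows essentially the same approach as the paper. The paper's proof of part~(b) is even a shade more direct: instead of working component-by-component and then summing $\sum_j (C'_j)^2$, it bounds the full vector at once via $\|F(u)\|\le\|C'\|$ (which is immediate from Assumption~\ref{assump:DRCS-bounded}) to get $\|F^\e(x)-F^\e(y)\|\le\|C'\|\int_{\Real^n}|p_c(u-x)-p_c(u-y)|\,du$ and then applies Lemma~\ref{lemma:cubic_Lipschitz_ineq}; but this is a cosmetic difference, and your observation that the joint-density lemma obviates the block-telescoping of the MSR proof is exactly the point.
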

\begin{proof}
(a) \ This result can be shown in a similar fashion to the proof of Proposition~\ref{lemma:DRSS-Lipschitz}a.

\noindent (b) \ 
Since the random vector $z_i$ is uniformly distributed on the set 
$C_{n_i}(0,\e_i)$ for each $i=1,\ldots,N$,  the random vector $z=(z_1;\ldots;z_N)$ 
is uniformly distributed on the set $\prod_{i=1}^N C_{n_i}(0,\e_i)$. 
By the definition of the approximation $F^\e$ in~(\ref{eqn:DLRT-def}),
it follows that for any $x, y \in X$,
	\begin{align*}
\| F^\e(x) -F^\e(y)\| 
	&=\left\|\int_{\mathbb{R}^n}{F(x+z)}p_c(z)dz-\int_{\mathbb{R}^n}{F(y+z)}p_c(z)dz\right\|\cr
	&=\left\|\int_{\mathbb{R}^n}{F(u)}p_c(u-x)du-\int_{\mathbb{R}^n}{F(v)}p_c(v-y)dv\right\|\cr
	&=\left\|\int_{\mathbb{R}^n}{F(u)}(p_c(u-x)-p_c(u-y))du\right\|\cr
	& \le \int_{\mathbb{R}^n}\|F(u)\||p_c(u-x)-p_c(u-y)|du,
\end{align*} 
where in the second equality we let $u=x+z$ and $v=y+z$, while the 
inequality follows from the triangle inequality. 
Invoking Assumption~\ref{assump:DRCS-bounded} we obtain 
\begin{align}\label{eqn:DRCS-Lipschitz1}
\| F^\e(x) -F^\e(y)\|\le \|C^\prime\|\int_{\mathbb{R}^n}|p_c(u-x)-p_c(u-y)|du.
\end{align}
The desired relation follows from relation~\eqref{eqn:DRCS-Lipschitz1} and 
Lemma~\ref{lemma:cubic_Lipschitz_ineq}.
\end{proof}

\subsection{A distributed {locally randomized} SA scheme}\label{sec:conv_smoothing}
The locally randomized schemes presented in
Section~\ref{sec:dist_smoothing} facilitate the construction of  a distributed locally randomized SA scheme. 
Consider the Cartesian stochastic variational inequality problem VI$(X,F^\e)$ 
given in~\eqref{eqn:DLRT-def} where the mapping $F$ is not necessarily
Lipschitz. In this section, we assume that the conditions of the MSR scheme are
	satisfied, i.e., for all $i=1,\ldots,N$, the random vector $z_i$ is
		uniformly distributed over the set $\in B_{n_i}(0,\e_i)$
		independently from the other random vectors $z_j$ for $j\neq i$, and
		the mapping $F$ in (\ref{def-F}) is defined over the set $X_s^{\e}$.
Let the sequence $\{x_k\}$ be given by 
\begin{align}
\begin{aligned}
x_{{k+1},i}  =\Pi_{X_i}\left(x_{k,i}-\g_{k,i} \Phi_i(x_k+z_k,\xi_k)\right),
\end{aligned}\label{eqn:DLRSA}
\end{align}
for all $k\ge 0$ and $i=1,\ldots,N$, where
$\g_{k,i} >0$ denotes the stepsize of the $i$-th agent at iteration
$k$, $x_k =(x_{k,1}; x_{k,2}; \ldots ;x_{k,N})$, and $z_k =(z_{k,1};
		z_{k,2}; \ldots ; z_{k,N})$. The following proposition proves
the almost-sure convergence of the iterates {generated} by 
algorithm~\eqref{eqn:DLRSA} to the solution of the approximation
VI$(X,F^\e)$. In this result, we proceed to show that the {approximation}
does indeed satisfy the assumptions of
Proposition~\ref{prop:rel_bound} and convergence can then be immediately
claimed. We define $\sF'_k$, the history of the method up to time $k$, as
$$\sF'_k\triangleq \{x_0,z_0,\xi_0,z_1,\xi_1,\ldots,z_{k-1},\xi_{k-1}\},$$
for $k\ge 1$ and $\sF'_0=\{x_0\}$.
We assume that, at any iteration $k$, the vectors $z_k$ and $\xi_k$ in~\eqref{eqn:DLRSA}
are independent given the history $\sF'_k$.

\begin{proposition}[Almost-sure convergence of locally randomized DASA
scheme]
\label{prop:DLRSA}
Let Assumptions
	\ref{assum:different_main}a, \ref{assum:step_error_sub}, and
		\ref{assump:DRSS-bounded} hold, 
		and suppose that mapping $F$ is strongly monotone on the set $X_s^\e$ with a constant $\eta>0$.
		Also, assume that, for each $i=1,\ldots,N$, there exists a constant $\nu_i>0$ such that 
\begin{equation}\label{eq:nui}
\EXP{\|\Phi_i(x_k+z_k,\xi_k)-F_i(x_k+z_k)\|^2\mid \sF'_k}\le\nu_i^2
		\qquad a.s.\hbox{ for all $k$.}\end{equation}
Then, the sequence $\{x_k\}$
generated by algorithm~\eqref{eqn:DLRSA} converges almost surely
to the unique solution of VI$(X,F^\e)$.
\end{proposition}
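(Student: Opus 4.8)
The plan is to verify that the approximate problem VI$(X,F^\e)$ and the recursion~\eqref{eqn:DLRSA} satisfy the hypotheses of Proposition~\ref{prop:rel_bound}, applied with $F$ replaced by $F^\e$, so that almost-sure convergence of $\{x_k\}$ to the unique solution of VI$(X,F^\e)$ is immediate. Assumption~\ref{assum:different_main}a is part of the hypothesis; Assumption~\ref{assum:different_main}b, the Lipschitz continuity of $F^\e$ over $X$, is precisely Proposition~\ref{lemma:DRSS-Lipschitz}b, which furnishes a finite Lipschitz constant $L^\e$ for $F^\e$ (this is the constant that plays the role of $L$ when reading Assumption~\ref{assum:step_error_sub}), and Assumption~\ref{assump:DRSS-bounded} guarantees $L^\e<\infty$. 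The only genuinely new point in Assumption~\ref{assum:different_main} is strong monotonicity of $F^\e$ on $X$, which I would transfer from strong monotonicity of $F$ on $X_s^\e$ by a shift argument: for $x,y\in X$ and any $z$ in the support of the smoothing density, both $x+z$ and $y+z$ belong to $X_s^\e=X+\prod_{i=1}^N B_{n_i}(0,\e_i)$ and $(x+z)-(y+z)=x-y$, so $\big(F(x+z)-F(y+z)\big)^T(x-y)\ge\eta\|x-y\|^2$; taking expectations over $z$ and using the definition~\eqref{eqn:DLRT-def} of $F^\e$ yields $\big(F^\e(x)-F^\e(y)\big)^T(x-y)\ge\eta\|x-y\|^2$. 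Since $F^\e$ is then continuous and strongly monotone on the closed convex set $X$, Theorem~2.3.3 of~\cite{facchinei02finite} gives the existence and uniqueness of the solution of VI$(X,F^\e)$.

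Next I would recast algorithm~\eqref{eqn:DLRSA} in the form of~\eqref{eqn:algorithm_different}: with $w_{k,i}\triangleq\Phi_i(x_k+z_k,\xi_k)-F_i^\e(x_k)$, the update becomes $x_{k+1,i}=\Pi_{X_i}\big(x_{k,i}-\g_{k,i}(F_i^\e(x_k)+w_{k,i})\big)$. To see that $\EXP{w_{k,i}\mid\sF'_k}=0$, I would condition first on $(\sF'_k,z_k)$: as $\xi_k$ is independent of $z_k$ given $\sF'_k$ and $F_i(x)=\EXP{\Phi_i(x,\xi)}$, it follows that $\EXP{\Phi_i(x_k+z_k,\xi_k)\mid\sF'_k,z_k}=F_i(x_k+z_k)$, and a further expectation over $z_k$ together with~\eqref{eqn:DLRT-def} gives $\EXP{\Phi_i(x_k+z_k,\xi_k)\mid\sF'_k}=F_i^\e(x_k)$, i.e.\ $\EXP{w_{k,i}\mid\sF'_k}=0$. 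With this, the proofs of Lemma~\ref{lemma:error_bound_1} and Proposition~\ref{prop:rel_bound} carry over verbatim, with $\sF_k$ replaced by $\sF'_k$ and $F$ by $F^\e$.

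The one estimate that requires some care is Assumption~\ref{assum:w_k_bound}. I would split $w_{k,i}=a_{k,i}+b_{k,i}$ with $a_{k,i}\triangleq\Phi_i(x_k+z_k,\xi_k)-F_i(x_k+z_k)$ (the \emph{sampling error}) and $b_{k,i}\triangleq F_i(x_k+z_k)-F_i^\e(x_k)$ (the \emph{smoothing error}). Here $b_{k,i}$ is $\sigma(\sF'_k,z_k)$-measurable while $\EXP{a_{k,i}\mid\sF'_k,z_k}=0$, so the cross term vanishes and $\EXP{\|w_{k,i}\|^2\mid\sF'_k}=\EXP{\|a_{k,i}\|^2\mid\sF'_k}+\EXP{\|b_{k,i}\|^2\mid\sF'_k}$. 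The first term is at most $\nu_i^2$ by~\eqref{eq:nui}; for the second, the identity $\EXP{\|b_{k,i}\|^2\mid\sF'_k}=\EXP{\|F_i(x_k+z_k)\|^2\mid\sF'_k}-\|F_i^\e(x_k)\|^2$ together with $x_k+z_k\in X_s^\e$ (recall $x_k\in X$) and Assumption~\ref{assump:DRSS-bounded} give $\EXP{\|b_{k,i}\|^2\mid\sF'_k}\le C_i^2$. Summing over $i$ yields the deterministic bound $\EXP{\|w_k\|^2\mid\sF'_k}\le\sum_{i=1}^N(\nu_i^2+C_i^2)=:\nu^2$, so Assumption~\ref{assum:w_k_bound} holds, and Assumption~\ref{assum:step_error_sub} is assumed. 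All hypotheses of Proposition~\ref{prop:rel_bound} being met for VI$(X,F^\e)$ and algorithm~\eqref{eqn:algorithm_different} with the map $F^\e$, the conclusion follows. The main obstacle is bookkeeping rather than depth: carefully tracking the two independent randomization sources $z_k$ and $\xi_k$ through the nested conditional expectations, and observing that the smoothing error $b_{k,i}$, though nonzero, is conditionally unbiased given $\sF'_k$ and uniformly bounded by $C_i$, so it merely inflates the admissible noise level from $\nu_i^2$ to $\nu_i^2+C_i^2$.
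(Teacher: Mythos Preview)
Your proposal is correct and follows the same strategy as the paper: verify the hypotheses of Proposition~\ref{prop:rel_bound} for $F^\e$ by invoking Proposition~\ref{lemma:DRSS-Lipschitz}b for Lipschitz continuity, transferring strong monotonicity from $F$ on $X_s^\e$ to $F^\e$ on $X$ via the shift $u=x+z,\ v=y+z$ and averaging over $z$, rewriting~\eqref{eqn:DLRSA} in the form of~\eqref{eqn:algorithm_different} with $w_{k,i}=\Phi_i(x_k+z_k,\xi_k)-F_i^\e(x_k)$, and then checking the zero-mean and bounded second-moment conditions after splitting $w_{k,i}$ into a sampling part $a_{k,i}$ and a smoothing part $b_{k,i}$. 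Your use of the conditional orthogonality $\EXP{a_{k,i}^Tb_{k,i}\mid\sF'_k}=0$ yields the bound $\sum_i(\nu_i^2+C_i^2)$, which is a factor of two sharper than the paper's $2\sum_i\nu_i^2+2\|C\|^2$ obtained via the cruder inequality $\|a+b\|^2\le 2\|a\|^2+2\|b\|^2$, but this refinement is immaterial for the conclusion.
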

\begin{proof}
Define random vector $\xi^\prime  \triangleq (z_1; z_2; \ldots ;
		z_N ; \xi )$, allowing us to rewrite algorithm (\ref{eqn:DLRSA})
	as follows:
\begin{align}
\begin{aligned}
x_{{k+1},i}  &=\Pi_{X_i}\left(x_{k,i}-\g_{k,i} (F_i^\e(x_k)+w_{k,i}^\prime )\right), \cr
 w_{k,i}^\prime  &\triangleq \Phi_i(x_k+z_k,\xi_k)-F_i^\e(x_k).
\end{aligned}\label{eqn:DLRSA2}
\end{align}
To prove convergence of the iterates
	produced by \eqref{eqn:DLRSA2}, it suffices to show that the
	conditions of Proposition~\ref{prop:rel_bound} are satisfied for the
	set $X$, the mapping $F^\e$, and the stochastic errors $w_{k,i}^\prime
	$. 

\noindent (i) Since Assumption \ref{assump:DRSS-bounded} holds, 
Proposition~\ref{lemma:DRSS-Lipschitz}b implies that the mapping $F^\e$ is
	Lipschitz over the set $X$ with the constant {$\sqrt{N}\|C\|
	\max_{1\le j\le N }\{\kappa_j\frac{n_j!!}{(n_j-1)!!}\,\frac{1}{\e_j}\}$}.
	Thus, Assumption~\ref{assum:different_main}b holds for the mapping~$F^\e$. 
	
	\noindent (ii)  Next, we show that the mapping $F^\e$ is
	strongly monotone over $X$. Since the mapping $F$ is strongly
	monotone over the set $X_s^\e$ with a constant $\eta >0$, 
	for any $u,v \in X_s^\e$, we have
\[(u-v)^T(F(u)-F(v))\geq \eta\|u-v\|^2.\]
Therefore, for any $x,y \in X$ and any realization of the random vector $z$,
	the vectors $x+z$ and $y+z$ belong to the set $X_s^\e$. Consequently, by
		defining $u\triangleq x+z$ and $v\triangleq y+z$, respectively, and nothing that $u-v=x-y$,
	from the previous relation we obtain
\[(x-y)^T(F(x+z)-F(y+z))\geq \eta\|x-y\|^2.\]
Taking expectations on both sides, it follows that 
\[(x-y)^T\left(\EXP{F(x+z)}-\EXP{F(y+z)}\right)\geq \eta\|x-y\|^2,\]
which implies that $F^\e$ is strongly monotone over the set $X$ with
the constant $\eta$. \\

\noindent (iii) The last step of the proof entails showing that the stochastic errors
$w_k^\prime \triangleq (w_{k,1} ; w_{k,2} ; \ldots ; w_{k,N})$ are well-defined, i.e., 
$\EXP{w_{k}^\prime \mid \sF^\prime_k}=0$ and that Assumption
\ref{assum:w_k_bound} holds with respect to the  stochastic error
$w_k^\prime$. Consider
the definition of {$w_{k,i}^\prime$} in (\ref{eqn:DLRSA2}). Taking conditional
expectations on both sides, we have for all $i=1,\ldots, N$
{\begin{align*}
\EXP{w_{k,i}^\prime \mid \sF^\prime_k}  = {\EXPz{\Phi_i(x_k+z_k,\xi_k)}}-F_i^\e(x_k)
  = \EXP{F_i(x_k+z_k)}-F_i^\e(x_k)
 = F_i^\e(x_k)-F_i^\e(x_k) =0,
\end{align*}}
where the last equality is obtained using the definition of $F^\e$ in~(\ref{eqn:DLRT-def}).
	Consequently, 
	it suffices to
	show that the condition of Assumption \ref{assum:w_k_bound}
holds. This may be expressed as follows: 
\begin{align*}
\EXP{\|w_{k}^\prime\|^2 \mid
	\sF^\prime_k}&=\EXP{\sum_{i=1}^N\|w_{k,i}^\prime\|^2\mid \sF^\prime_k} 
	= \EXPz{\sum_{i=1}^N\|\Phi_i(x_k+z_k,\xi_k)-F_i^\e(x_k)\|^2\mid
	\sF^\prime_k}.
	\end{align*}
	By adding and subtracting $F_i(x_k+z_k)$ we obtain
\begin{align*}
\EXP{\|w_{k}^\prime\|^2 \mid
	\sF^\prime_k}
 \leq & 2\EXPz{\sum_{i=1}^N\left( \|\Phi_i(x_k+z_k,\xi_k) - F_i(x_k+z_k) \|^2
		 +\|F_i(x_k+z_k) - F_i^\e(x_k)\|^2 \right)\mid
	\sF^\prime_k}\cr
 = & 2\sum_{i=1}^N \EXP{ \EXP{\|\Phi_i(x_k+z_k,\xi_k) - F_i(x_k+z_k) \|^2\mid
	\sF^\prime_k, z_k} \mid \sF'_k }\cr
&	+ 2 \sum_{i=1}^N\EXP{(\|F_i(x_k+z_k)\|^2-\|F_i^\e(x_k)\|^2)  \mid\sF'_k},
\end{align*}
where the last term is obtained from the following relation: 
\[\EXP{F_i(x_k+z_k)^T F^\e(x_k)\mid\sF_k'}= 
\EXP{F_i(x_k+z_k)^TF^\e(x_k)\mid x_k}=\|F^\e(x_k)\|^2.\]
Using the assumption on the errors given in~\eqref{eq:nui}, we further obtain
\begin{align}\label{eq:one}
\EXP{\|w_{k}^\prime\|^2 \mid
	\sF^\prime_k} \leq   2\sum_{i=1}^N\nu_i^2 + 
	2 \sum_{i=1}^N\EXP{(\|F_i(x_k+z_k)\|^2- \|F_i^\e(x_k)\|^2)  \mid\sF'_k}.\end{align}
Furthermore, we have
\begin{align}\label{eq:one2}
\sum_{i=1}^N\EXP{(\|F_i(x_k+z_k)\|^2- \|F_i^\e(x_k)\|^2)  \mid\sF'_k}
	\le \sum_{i=1}^N\EXP{\|F_i(x_k+z_k)\|^2 \mid\sF'_k}\le C^2,\end{align}
	where we use the fact $x_k+z_k\in X_s^\e$ and 
	the assumption that $F_i$ is uniformly bounded over the set $X_s^\e$ 
	(cf.~Assumption~\ref{assump:DRSS-bounded}). Relations~\eqref{eq:one}--\eqref{eq:one2}
imply that the stochastic errors $\{w_{k}^\prime\}$ satisfy Assumption
\ref{assum:w_k_bound}. Thus, the conditions of Proposition~\ref{prop:rel_bound}
are satisfied for the set $X$, the mapping $F^\e$, and the stochastic
	errors $w_{k,i}^\prime $ and the convergence result follows.
\end{proof}

The distributed locally randomized SA scheme produces a solution that is an approximation to
the true solution. A natural question is whether the sequence of
approximations tends to the solution of VI$(X,F)$ as $\e$, the size of
	the support of the randomization, tends to zero. The following proposition resolves this
question in the affirmative.

\begin{proposition}\label{prop:limit-epsilon}
Let Assumption~\ref{assum:different_main}a hold, and suppose that
	mapping $F$ is a continuous and strongly monotone over the set $X_s^\e$. 
	Let $x^\e$ and $x^*$ denote the solution of VI$(X,F^\e)$ and
	VI$(X,F)$, respectively. Then $x^\e \to x^*$ when $\e \to 0$.
\end{proposition}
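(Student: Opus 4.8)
The plan is to derive an explicit error bound of the form $\|x^\e-x^*\|\le \eta^{-1}\|F^\e(x^*)-F(x^*)\|$, and then to show that the right-hand side — the bias introduced by the local randomization — tends to zero as $\e\to0$ using only continuity of $F$ at the point $x^*$. Existence and uniqueness of $x^\e$ are not a concern: part (ii) of the proof of Proposition~\ref{prop:DLRSA} shows that $F^\e$ inherits strong monotonicity over $X$ with the same constant $\eta$, and $F^\e$ is continuous since $F$ is continuous and $z$ has bounded support (dominated convergence), so Theorem~2.3.3 of~\cite{facchinei02finite} applies.

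First I would combine strong monotonicity of $F^\e$ with the two variational inequalities. Strong monotonicity of $F^\e$ over $X$ gives
\[ \eta\|x^\e-x^*\|^2 \le (x^\e-x^*)^T\big(F^\e(x^\e)-F^\e(x^*)\big). \]
Since $x^\e$ solves VI$(X,F^\e)$, taking $x=x^*$ yields $(x^*-x^\e)^TF^\e(x^\e)\ge0$, i.e. the term $(x^\e-x^*)^TF^\e(x^\e)$ is $\le0$; since $x^*$ solves VI$(X,F)$, taking $x=x^\e$ yields $(x^*-x^\e)^TF(x^*)\le0$. Dropping the nonpositive term $(x^\e-x^*)^TF^\e(x^\e)$ and then subtracting the nonpositive quantity $(x^*-x^\e)^TF(x^*)$ from the right-hand side (both operations only enlarge it) gives
\[ \eta\|x^\e-x^*\|^2 \le (x^*-x^\e)^T\big(F^\e(x^*)-F(x^*)\big) \le \|x^\e-x^*\|\,\|F^\e(x^*)-F(x^*)\|, \]
the last step by the Cauchy--Schwarz inequality. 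Hence $\|x^\e-x^*\|\le\eta^{-1}\|F^\e(x^*)-F(x^*)\|$ (trivially also when $x^\e=x^*$). Note that this bound already shows $\{x^\e\}$ remains bounded, so no separate compactness argument is needed even though $X$ may be unbounded.

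It then remains to show $\|F^\e(x^*)-F(x^*)\|\to0$ as $\e\to0$. Because $0\in\prod_{i=1}^N B_{n_i}(0,\e_i)$ we have $x^*\in X\subseteq X_s^\e$ and, for every $z$ in the support of the randomization, $x^*+z\in X_s^\e$, so $F(x^*+z)$ is well defined. By the definition~\eqref{eqn:DLRT-def} of $F^\e$ and Jensen's inequality,
\[ \|F^\e(x^*)-F(x^*)\| = \big\|\EXP{F(x^*+z)-F(x^*)}\big\| \le \EXP{\|F(x^*+z)-F(x^*)\|}. \]
Given $\delta>0$, continuity of $F$ at $x^*$ provides $r>0$ with $\|F(x^*+z)-F(x^*)\|\le\delta$ whenever $\|z\|\le r$; once $\max_i\e_i$ is small enough that the support of $z$ lies in $B_n(0,r)$, the expectation above is at most $\delta$. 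Thus $\|F^\e(x^*)-F(x^*)\|\to0$, and therefore $x^\e\to x^*$ as $\e\to0$. The argument is otherwise routine; the only point worth a little care is that continuity of $F$ at the single point $x^*$ — rather than any Lipschitz or global boundedness hypothesis — already suffices to kill the bias term, which is what makes this proposition compatible with the non-Lipschitzian setting of this section.
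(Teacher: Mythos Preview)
Your proof is correct and follows essentially the same route as the paper: both derive the bound $\|x^\e-x^*\|\le\eta^{-1}\|F^\e(x^*)-F(x^*)\|$ by combining the two variational inequalities with strong monotonicity of $F^\e$ and Cauchy--Schwarz, and then use continuity of $F$ at $x^*$ to show the bias $\|F^\e(x^*)-F(x^*)\|\to0$. The only cosmetic difference is the order of the manipulations (you start from the strong monotonicity inequality and drop nonpositive terms, the paper first adds the two VI inequalities and then invokes strong monotonicity), but the ingredients and the resulting bound are identical.
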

\begin{proof}
As showed in the proof of Proposition~\ref{prop:DLRSA}, $F^\e$ is also
strongly monotone over the set $X$ with constant $\eta$. Since set $X$ is
assumed to be closed and convex, the definition of $X_s^\e$ implies that
\sfy{$X_s^\e$} is also closed and convex. Thus, the existence and uniqueness of
the solution to VI$(X,F)$, as well as VI$(X, F^\e)$, is guaranteed by
Theorem 2.3.3 of~\cite{facchinei02finite}. 
	
Let $\e=(\e_{1}, \e_{2}, \ldots,
		\e_{N})$ with $\e_i>0$ for all $i$ be arbitrary, and let
				   $x^{\e}$ denote the solution to
				   VI$(X,F^{\e})$. Let $x^*$ be the solution to VI$(X,F)$.
				   Thus, since $x^{\e}$ is the solution to
				   VI$(X,F^{\e})$, we have
$(x^*- x^\e)^TF^\e(x^\e) \geq 0.$ Similarly, 
since $x^*$ is the solution to VI$(X,F)$, we have $(x^\e-x^*)^TF(x^*) \geq 0$.
Adding the preceding two inequalities, we obtain for any $k \geq 0$,
\[(x^* -x^\e)^T(F^\e(x^\e) - F(x^*)) \geq 0.\]
Adding and subtracting the term $F^\e(x^*)$, we have 
\[(x^* - x^\e)^T(F^\e(x^\e) - F^\e(x^*)) + (x^*- x^\e)^T(F^\e(x^*) - F(x^*))\geq  0,\]
	implying that 
\[(x^* - x^\e)^T(F^\e(x^*) - F(x^*))\geq  (x^* - x^\e)^T(F^\e(x^*)-F^\e(x^\e))\ge\eta\|x^*-x^\e\|^2,\]
where the last inequality follows by the 
strong monotonicity of the mapping $F^\e$.
By invoking the Cauchy-Schwartz inequality, we obtain 
\begin{align}\label{ineq:x_k-x*}
 \|F^\e(x^*)-F(x^*)\| \geq \eta \|x^* - x^\e\|.
\end{align}
Next, we show that $\lim_{\e \to 0}F^\e(x^*) = F(x^*).$  
	By the definition of $F^\e$ and Jensen's inequality, we have
\begin{align}\label{ineq:prop9-1}
\|F^\e(x^*) - F(x^*)\| =\|\EXP{F(x^*+z) - F(x^*)}\|  \leq \EXP{\|F(x^*+z)-F(x^*)\|}.
\end{align}
Then, the expectation on the right-hand side can be expressed as follows:
\begin{align}\label{ineq:prop9-2} 
\EXP{\|F(x^*+z)-F(x^*)\|} 
	 &= \int_{\mathbb{R}^{n_1}}  \ldots \int_{\mathbb{R}^{n_N}}
	 \|F(x^*+z)-F(x^*)\|\left(\prod_{i=1}^N p_u(z_i)\right) dz_1 \cdots dz_N \cr
& = \int_{{ B_{n_1}(0,\e_1)}}  \ldots
\int_{{ B_{n_N}(0,\e_N)}}\|F(x^*+z) - F(x^*)\|\left(\prod_{i=1}^N p_u(z_i)
\right)dz_1 \cdots dz_N, \qquad
\end{align}
where the second equality is a consequence of the definition of the
random vector $z$. Let $\delta>0$ be an arbitrary fixed number.
By the continuity of $F$ over
		{$X_s^\e$}, there exists a $\delta^\prime >0$, such that if
		$\|(x^*+z)-x^*\|\le \delta^\prime$, then $\|F(x^*+z)-F(x^*)\|\le
		\delta$. Therefore, for all $\e=(\e_1,\e_2,\ldots,\e_N)$ with 
		$\|\e\|\le\delta'$ we have $\|z\|\le \|\e\|\le\delta'$ for $z\in\prod_{i=1}^NB_{n_i}(0,\e_i)$,
		 which is equivalent to
		$\|(x^*+z)-x^*\|\le \delta^\prime$. Hence,  $\|F(x^*+z)-F(x^*)\|\le
		\delta$ for all $z\in \prod_{i=1}^NB_{n_i}(0,\e_i)$ with $\e_i$ such that $\|\e\|\le\delta'$.
		Thus, using
				(\ref{ineq:prop9-1}) and (\ref{ineq:prop9-2}), for any $\e=(\e_1,\ldots,\e_N)$
				with $\|\e\|\le\delta'$, we have 
\[\|F^\e(x^*)-F(x^*)\| 
\leq \delta {\int_{{ B_{n_1}(0,\e_{k,1})}}  \ldots \int_{{ B_{n_N}(0,\e_{k,N})}}
\left(\prod_{i=1}^N p_u(z_i)  \right)d{z_{k,1}} \ldots d{z_{k,N}}} =\delta.\]
Since $\delta>0$ was arbitrary, we conclude that $\lim_{\e\to0}\|F_k(x^*)-F(x^*)\|=0$. 
Therefore, taking limits on both
	sides of inequality~(\ref{ineq:x_k-x*}), 
we obtain $\lim_{\e\to0}\|x^*-{x^{\e_k}}\|=0$.
\end{proof}

{\textbf{Remark:} Note that the results of Propostion~\ref{prop:DLRSA} and Proposition~\ref{prop:limit-epsilon}
hold when the random vector $z$ fits the conditions of the MCR scheme.} 
\section{Numerical results}\label{sec:numerics}
In this section, we report the results of our numerical experiments on
two sets of test problems. Of these, the first is a stochastic bandwidth-sharing problem in
communication networks (Sec.  \ref{sec:5.2}), while the second is a
stochastic Nash-Cournot game (Sec. \ref{sec:5.1}). In each instance, we
	compare the performance of the distributed adaptive stepsize SA
	scheme (DASA) given by (\ref{eqn:gi0})--(\ref{eqn:gik}) with that of
	SA schemes with harmonic stepsize sequences (HSA), where agents use
	the stepsize $\frac{\theta}{k}$ at iteration $k$. More precisely, we
	consider three different values of the parameter $\theta$, i.e.,
	$\theta = 0.1$, $1$, and $10$. This diversity of choices allows us to observe
	the sensitivity of the HSA scheme to different settings of the
	parameters. In the context of Nash-Cournot games, we use the distributed locally
	randomized SA scheme described in Sec. \ref{sec:conv_smoothing}
	with the MSR and MCR techniques.   In each
	instance, we conduct a sensitivity analysis where we consider $12$
	different parameter settings,  categorized
	into $4$ sets. In each set, one parameter is changed while other
	parameters are maintained as fixed. We  provide $90\%$ confidence
	intervals of the mean squared error for each of the $12$ settings.
	Our experiments have been done using Matlab
	$7.12$.

\subsection{A bandwidth-sharing problem in computer networks}\label{sec:5.2}
We consider a communication network where users compete for the
bandwidth. Such a problem can be captured by an optimization framework (cf. \cite{Cho05}). Motivated by this model, we consider a network
with $16$ nodes, $20$ links and $5$ users. Figure \ref{fig:network}
shows the configuration of this network. \begin{figure}[htb]
\begin{center}
 \includegraphics[scale=.40]{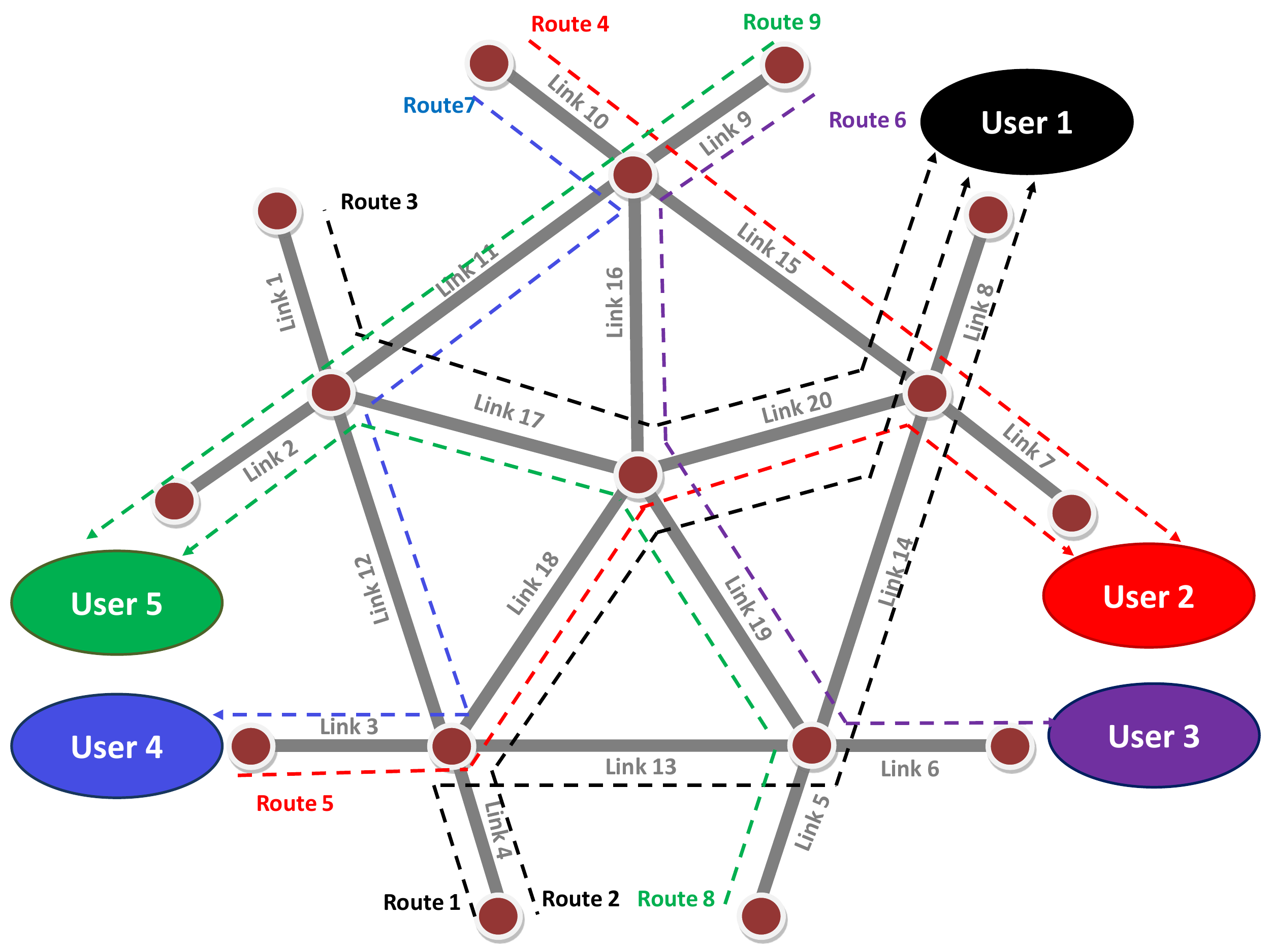}
 \caption{The bandwidth-sharing problem -- the network}
  \label{fig:network}
   \end{center}
 \vspace{-0.1in}  
 \end{figure} Users have access to  different routes as shown in Figure
 \ref{fig:network}. For example, user $1$ can  access routes $1$, $2$, and $3$. 
 Each user is characterized by a cost function. Additionally, there is a
congestion cost function that depends on the aggregate flow. More
specifically, the cost function
user $i$ with flow rate (bandwidth) $x_i$ is defined by
\[f_i(x_i,\xi_i)\triangleq -\sum_{r \in \mathcal{R}(i)} \xi_{i}(r)\log(1+x_i(r)),\]
for $i=1,\ldots, 5$, where $x\triangleq (x_1; \ldots; x_5)$ is the flow decision vector of the users, $\xi \triangleq (\xi_1; \ldots; \xi_5)$ is a random parameter corresponding to the different users, 
$\mathcal{R}(i)=\{1,2,\ldots,n_i\}$ is the set of routes assigned to the $i$-th user, $x_{i}(r)$ and $\xi_{i}(r)$ 
are the $r$-th element of the decision vector $x_i$ and the random vector $\xi_i$, respectively. 
We assume that
$\xi_i(r)$ is drawn from a uniform distribution for each $i$ and $r$. More precisely, $\xi_1(1)$, $\xi_1(2)$, and $\xi_1(3)$ are i.i.d.\ and uniformly distributed in $[1-0.1, 1+0.1]$, 
$\xi_2(1)$ and $\xi_2(2)$ are i.i.d.\ and uniformly distributed in $[1.4-0.2, 1.4+0.2]$, 
$\xi_3(1)$ and $\xi_4(1)$ are i.i.d.\ and uniformly distributed in $[0.8-0.05, 0.8+0.05]$ and $[1.6-0.2, 1.6+0.2]$, respectively, and 
$\xi_5(1)$ and $\xi_5(2)$ are i.i.d and uniformly distributed in $[1.2-0.1, 1.2+0.1]$. 

The links have limited capacities, which are given by
\[b=(10; 15; 15; 20; 10; 10; 20; 30; 25; 15; 20; 15; 10; 10; 15; 15; 20; 20; 25; 40).\]   
We may define the routing matrix $A$ that describes the relation between set of routes $\mathcal{R}=\{1,2,\ldots,9\}$ and set of links $\mathcal{L}=\{1,2,\ldots,20\}$. Assume that $A_{lr}=1$ if route $r \in \mathcal{R}$ goes through link $l \in \mathcal{L}$ and $A_{lr}=0$ otherwise. Using this matrix, the capacity constraints of the links can be described by $Ax \leq b$.

We formulate this model as a stochastic optimization problem given by
\begin{align}\label{eqn:network-prob}
\displaystyle \mbox{minimize} \qquad &  \sum_{i=1}^N \EXP{f_i(x_i,\xi_i)} +c(x) \\
\mbox{subject to} \qquad& Ax \leq b \notag\\
& x \geq 0,\notag
\end{align}
where $c(x)$ is the network congestion cost.
We consider this cost of the form $c(x)=\|Ax\|^2$. 
Problem (\ref{eqn:network-prob}) is a convex optimization problem and the optimality conditions can be stated as a variational inequality given by $\nabla f(x^*)^T(x-x^*) \geq 0$, where $f(x) \triangleq \sum_{i=1}^N \EXP{f_i(x_i,\xi_i)} +c(x)$. Using our notation in Sec. \ref{sec:2.2}, we have $$F(x)\triangleq \nabla f(x)=-\left( 
\frac{\bar \xi_1(1)}{1+x_1(1)};\ldots;
\frac{\bar \xi_i(r_i)}{1+x_i(r_i)};
\ldots;
\frac{\bar\xi_5(2)}{1+x_5(2)} \right)+2A^TAx,$$
where $\bar \xi_i(r_i) \triangleq \EXP{\xi_i(r_i)}$ for any $i=1,\ldots,5$ and $r_i=1,\ldots,n_i$.
We now show that the mapping $F$ is Lipschitz and strongly monotone. 
Using the preceding relation, triangle inequality, and Cauchy-Schwartz inequality, for any $x,y \in X\triangleq \{x \in \mathbb{R}^N|Ax \leq b, x\geq0 \}$, we have
\begin{align*}
& \|F(x)-F(y)\| \\ &=\left\|-\left(\bar \xi_1(1)\left(\frac{1}{1+x_1(1)}-\frac{1}{1+y_1(1)}\right); \ldots; \bar \xi_5(2)\left(\frac{1}{1+x_5(2)}-\frac{1}{1+y_5(2)}\right) \right) +2A^TA(x-y)\right\|\\
&\leq \left\|\left(\bar \xi_1(1)\frac{x_1(1)-y_1(1)}{(1+x_1(1))(1+y_1(1))}; \ldots;\bar \xi_5(2) \frac{x_5(2)-y_5(2)}{(1+x_5(2))(1+y_5(2))} \right)\right\| +2\|A^TA\|\|x-y\|.
\end{align*}
Using nonnegativity constraints, from the preceding relation we obtain
\begin{align*} \|F(x)-F(y)\| \leq \max_{i,r_i}\bar \xi_i(r_i) \|x-y\| +2\|A^TA\|\|x-y\| 
= \left(\max_{i,r_i}\bar \xi_i(r_i) +2\|A^TA\|\right)\|x-y\|, 
\end{align*}
implying that $F$ is Lipschitz with constant $\max_{i,r_i}\bar \xi_i(r_i) +2\|A^TA\|$. To show the monotonicity of $F$, we write
\begin{align*}
& (F(x)-F(y))^T(x-y)\\
&=\left(\left(\bar \xi_1(1)\frac{x_1(1)-y_1(1)}{(1+x_1(1))(1+y_1(1))}; \ldots;\bar \xi_5(2) \frac{x_5(2)-y_5(2)}{(1+x_5(2))(1+y_5(2))} \right)+2A^TA(x-y) \right)^T(x-y)\\
& =\sum_{i,r}\bar \xi_i(r) \frac{(x_i(r)-y_i(r))^2}{(1+x_i(r))(1+y_i(r))}+2(x-y)^T(A^TA)(x-y)\\
& \geq \frac{\min_{i,r_i}\bar \xi_i(r_i)}{(1+\max_{l}{b(l)})^2}\|x-y\|^2+2(x-y)^T(A^TA)(x-y) \\
& =(x-y)^T\left(\frac{\min_{i,r_i}\bar \xi_i(r_i)}{(1+\max_{l}{b(l)})^2}{\bf{I}}_{N}  +2A^TA\right)(x-y). 
\end{align*}
Our choice of matrix $A$ is such that $A^TA$ is positive definite. Thus, the
preceding relation implies that $F$ is strongly monotone with parameter
$$\eta=\frac{\min_{i,r}\bar \xi_i(r_i)}{(1+\max_{l}{b(l)})^2}
+2\lambda_{\min}(A^TA),$$ 
where $\lambda_{\min}(A^TA)$ is the minimum eigenvalue of the matrix $A^TA$.

\subsubsection{Specification of parameters}
In this experiment, the optimal solution $x^*$ of the problem (\ref{eqn:network-prob}) is calculated by sample
average approximation (SAA) method using the nonlinear programming
solver \texttt{knitro} \cite{knitro}. Our goal lies in comparing the
performance of the DASA scheme given by (\ref{eqn:gi0})--(\ref{eqn:gik})
	with that of SA schemes using harmonic stepsize sequences of the form $\gamma_k = \frac{\theta}{k}$,
	referred to as HSA schemes.
	We consider
	three values for $\theta$ and observe the performance of HSA scheme
	in each case. To calculate the stepsize sequence in DASA scheme,
	other than $\eta$ and $L$ obtained in the previous part, parameters
	$c$, $r_i$, $D$, and $\nu$ need to be evaluated. We assume that
	$c=\frac{\eta}{4}$ and $r_i$ is uniformly drawn from the interval
	$[1,1+\frac{\eta-2c}{L}]$ for each user. We let the starting point
	of all SA schemes be zero, i.e., $x_0=0$. Thus, $D=\max_{x \in
		X}{\|x\|}$. Since the routing matrix $A$ has  binary entries,
		from $Ax \leq b$, one may conclude that $\sqrt{N}\max_{l}b(l)$
		can be chosen as $D$. To calculate $\nu$, for any $k \geq 0$ we
		have
\begin{align*}
\EXP{\|w_k\|^2\mid \sF_k}& =\EXP{\|\sfy{\Phi(x_k},\xi_k)-F(x_k)\|^2\mid \sF_k}\cr
& =\EXP{\left\|\left( \frac{\xi_{k,1}(1)-\bar \xi_{k,1}(1)}{1+x_{k,1}(1)}; \ldots; \frac{\xi_{k,5}(2)-\bar \xi_{k,5}(2)}{1+x_{k,5}(2)} \right)\right\|^2\mid \sF_k} \cr
& =\EXP{\sum_{i=1}^N\sum_{r=1}^{n_i} \left( \frac{\xi_{k,i}(r)-\bar \xi_{k,i}(r)}{1+x_{k,i}(r)} \right)^2\mid \sF_k} \cr
& = \sum_{i=1}^N\sum_{r=1}^{n_i}\frac{\mbox{var}(\xi_{k,i}(r))}{(1+x_{k,i}(r))^2}\cr
& \leq \sum_{i=1}^N\sum_{r=1}^{n_i}\mbox{var}(\xi_{k,i}(r)),
\end{align*}
where the last inequality is obtained using $x_{k,i}(r) \geq 0$. Thus,
	  $\sqrt{\sum_{i=1}^N\sum_{r=1}^{n_i}\mbox{var}(\xi_{k,i}(r))}$ is a
	  candidate for parameter $\nu$. On the other hand, $\nu$ needs to
	  satisfy $\nu \geq \frac{LD}{\sqrt{2}}$ from Theorem
	  \ref{prop:DASA}. Therefore, we set $\nu$ as follows: 
	  $$\nu=\max\left\{\sqrt{\sum_{i=1}^N\sum_{r=1}^{n_i}\mbox{var}(\xi_{k,i}(r))},
	  \frac{LD}{\sqrt{2}}\right\}.$$
\subsubsection{Sensitivity analysis}\label{sec:5.1.2}
We solve the bandwidth-sharing problem for $12$
different settings of parameters shown in Table
\ref{tab:network_errors1}. We consider $4$ parameters in our model that
scale the problem. Here, $m_b$ denotes the multiplier of the capacity
vector $b$, $m_c$ denotes the multiplier of the congestion cost function
$c(x)$, and $m_\xi$ and $d_\xi$ are two multipliers that parametrize the
random variable $\xi$. More precisely, if $i$-th user in route $r$ is
uniformly distributed in $[a-b,a+b]$, here we assume that it is
uniformly distributed in $[m_\xi a-d_\xi b, m_\xi a+d_\xi b]$. $S(i)$
denotes the $i$-th setting of parameters. For each of these $4$
parameters, we consider $3$ settings where one parameter changes and
other parameters are fixed. This allows us to observe the sensitivity of
the algorithms with respect to each of these parameters. 
\begin{table}[htb] 
\vspace{-0.05in} 
\tiny 
\centering 
\begin{tabular}{|c|c|c|c|c|c|} 
\hline 
-&S$(i)$ &  $m_b$ & $m_c$ & $m_\xi$ & $d_\xi$
\\ 
\hline 
\hline
\sfy{$m_b$} &1 &  1 & 1 & 5 & 2
  \\

\hbox{ }& 2 &  0.1 & 1 & 5 & 2
  \\

\hbox{ }& 3 &  0.01 & 1 & 5 & 2  \\ 
\hline 

$m_c$ &4 &  0.1 & 2 & 2 & 1
  \\

\hbox{ }& 5 &  0.1 & 1 & 2 & 1
  \\

\hbox{ }& 6 &  0.1 & 0.5 & 2 & 1 \\ 
\hline 

$m_\xi$ &7 &  1 & 1 & 1 & 5
  \\

\hbox{ }& 8 &  1 & 1 & 2 & 5
  \\

\hbox{ }& 9 &  1 & 1 & 5 & 5 \\ 
\hline 
$d_\xi$ &10 &   1 & 0.01 & 1 & 1
  \\

\hbox{ }& 11 & 1 & 0.01 & 1 & 2
  \\

\hbox{ }& 12 &  1 & 0.01 & 1 & 5\\
\hline

\end{tabular} 
\caption{The bandwidth-sharing problem: Parameter settings} 
\label{tab:network_errors1} 
\vspace{-0.1in} 
\end{table}	
The SA algorithms are terminated after $4000$ iterates.
To measure the error of the schemes, we run each scheme $25$ times and
then compute the mean squared error (MSE) using the metric $\frac{1}{25}\sum_{i=1}^{25}\|x_k^i-x^*\|^2$ for any $k=1,\ldots,4000$, where $i$ denotes the $i$-th sample.
Table \ref{tab:Traffic2} shows the $90\%$ confidence intervals (CIs) of the error for the DASA and HSA schemes. 
\begin{table}[htb] 
\vspace{-0.05in} 
\tiny 
\centering 
\begin{tabular}{|c|c||c||c||c|c|} 
\hline 
-&S$(i)$ &   DASA - $90\%$ CI &  HSA with $\theta=0.1$- $90\%$ CI & HSA with $\theta=1$ - $90\%$ CI & HSA with $\theta=10$ - $90\%$ CI
\\ 
\hline 
\hline
  \sfy{$m_b$} &1 & [$2.97 $e${-6}$,$4.66 $e${-6}$] &  [$1.52 $e${-6}$,$2.37 $e${-6}$] &  [$1.70 $e${-6}$,$2.97 $e${-6}$]&  [$1.33 $e${-5}$,$1.81 $e${-5}$]
  \\

\hbox{ }&2 & [$2.97 $e${-6}$,$4.66 $e${-6}$] &  [$1.52 $e${-6}$,$2.37 $e${-6}$] &  [$1.70 $e${-6}$,$2.97 $e${-6}$]&  [$1.33 $e${-5}$,$1.81 $e${-5}$]
  \\

\hbox{ }&3& [$1.15 $e${-7}$,$3.04 $e${-7}$] &  [$2.12 $e${-8}$,$4.92 $e${-8}$] &  [$4.66 $e${-8}$,$1.17 $e${-7}$]&  [$8.07 $e${-7}$,$2.43 $e${-6}$]
  \\ 
  
  \hline 
  
  $m_c$ &4 &  [$4.39 $e${-7}$,$6.55 $e${-7}$] &  [$1.33$e${-6}$,$1.80 $e${-6}$] &  [$4.71 $e${-7}$,$8.75 $e${-7}$]&  [$3.84 $e${-6}$,$5.38 $e${-6}$]
  \\

\hbox{ }&5 &  [$1.29 $e${-6}$,$1.97 $e${-6}$] &  [$9.00$e${-6}$,$1.20 $e${-5}$] &  [$7.88 $e${-7}$,$1.36 $e${-6}$]&  [$5.61 $e${-6}$,$7.98 $e${-6}$]
  \\

\hbox{ }&6 &  [$3.44 $e${-6}$,$5.36 $e${-6}$] &  [$2.26$e${-4}$,$2.53 $e${-4}$] &  [$1.25 $e${-6}$,$1.99 $e${-6}$]&  [$7.34 $e${-6}$,$1.12 $e${-5}$]
  \\ 
  \hline

 $m_\xi$ &7& [$4.29 $e${-5}$,$6.40 $e${-5}$] &  [$7.92 $e${-5}$,$1.49 $e${-4}$] &  [$2.83 $e${-5}$,$4.75 $e${-5}$]&  [$1.84 $e${-4}$,$2.75 $e${-4}$] 
  \\

\hbox{ }&8 &[$3.18 $e${-5}$,$4.83 $e${-5}$] &  [$3.46 $e${-5}$,$6.07 $e${-5}$] &  [$1.97 $e${-5}$,$3.39 $e${-5}$]&  [$1.40 $e${-4}$,$1.99 $e${-4}$] 
  \\

\hbox{ }&9 & [$1.83 $e${-5}$,$2.88 $e${-5}$] &  [$6.12 $e${-6}$,$9.99 $e${-6}$] &  [$1.06 $e${-5}$,$1.85 $e${-5}$]&  [$8.33 $e${-5}$,$1.13 $e${-4}$]
  \\
  \hline
  
  $d_\xi$ &10 &[$3.82 $e${-4}$,$5.91 $e${-4}$] &  [$2.86 $e${+1}$,$2.86 $e${+1}$] &  [$5.50 $e${-1}$,$5.70 $e${-1}$]&  [$7.23 $e${-5}$,$9.64 $e${-5}$]
  \\ 
 
\hbox{ }&11 & [$9.81 $e${-4}$,$1.44 $e${-3}$] &  [$2.86 $e${+1}$,$2.86 $e${+1}$] &  [$5.45 $e${-1}$,$5.85 $e${-1}$]&  [$2.85 $e${-4}$,$3.80 $e${-4}$]
  \\  
\hbox{ }&12 &[$6.26 $e${-3}$,$8.44 $e${-3}$] &  [$2.85 $e${+1}$,$2.86 $e${+1}$] &  [$5.47 $e${-1}$,$6.44 $e${-1}$]&  [$1.77 $e${-3}$,$2.36 $e${-3}$]
  \\ 

  \hline

\end{tabular} 
\caption{The bandwidth-sharing problem -- $90\%$ CIs for DASA and HSA schemes} 
\label{tab:Traffic2} 
\vspace{-0.1in} 
\end{table}	

\subsubsection{Results and insights} 
We observe that DASA scheme performs favorably and is far more robust in
comparison with the HSA schemes with different choice of $\theta$.
Importantly, in most of the settings, DASA stands close to the HSA
scheme with the minimum MSE.  Note that when $\theta=1$ or $\theta=10$,
the stepsize $\frac{\theta}{k}$ is not within the interval
$(0,\frac{\eta-\beta L}{(1+\beta)^2L^2}]$ for small $k$ and is not
		feasible in the sense of Prop.  \ref{prop:rec_results}.
		Comparing the performance of each HSA scheme in different
		settings, we observe that HSA schemes are fairly sensitive to
		the choice of parameters.  For example, HSA with $\theta=0.1$
		performs very well in settings S$(1)$, S$(2)$, and S$(3)$, while
		its performance deteriorates in settings S$(10)$, S$(11)$, and
		S$(12)$. A similar discussion holds for other two HSA schemes. A
		good instance of this argument is shown in Figure
		\ref{fig:traffic_all}.
 \begin{figure}[htb]
 \centering
 \subfloat[Setting S$(1)$]
{\label{fig:traffic_prob1}\includegraphics[scale=.25]{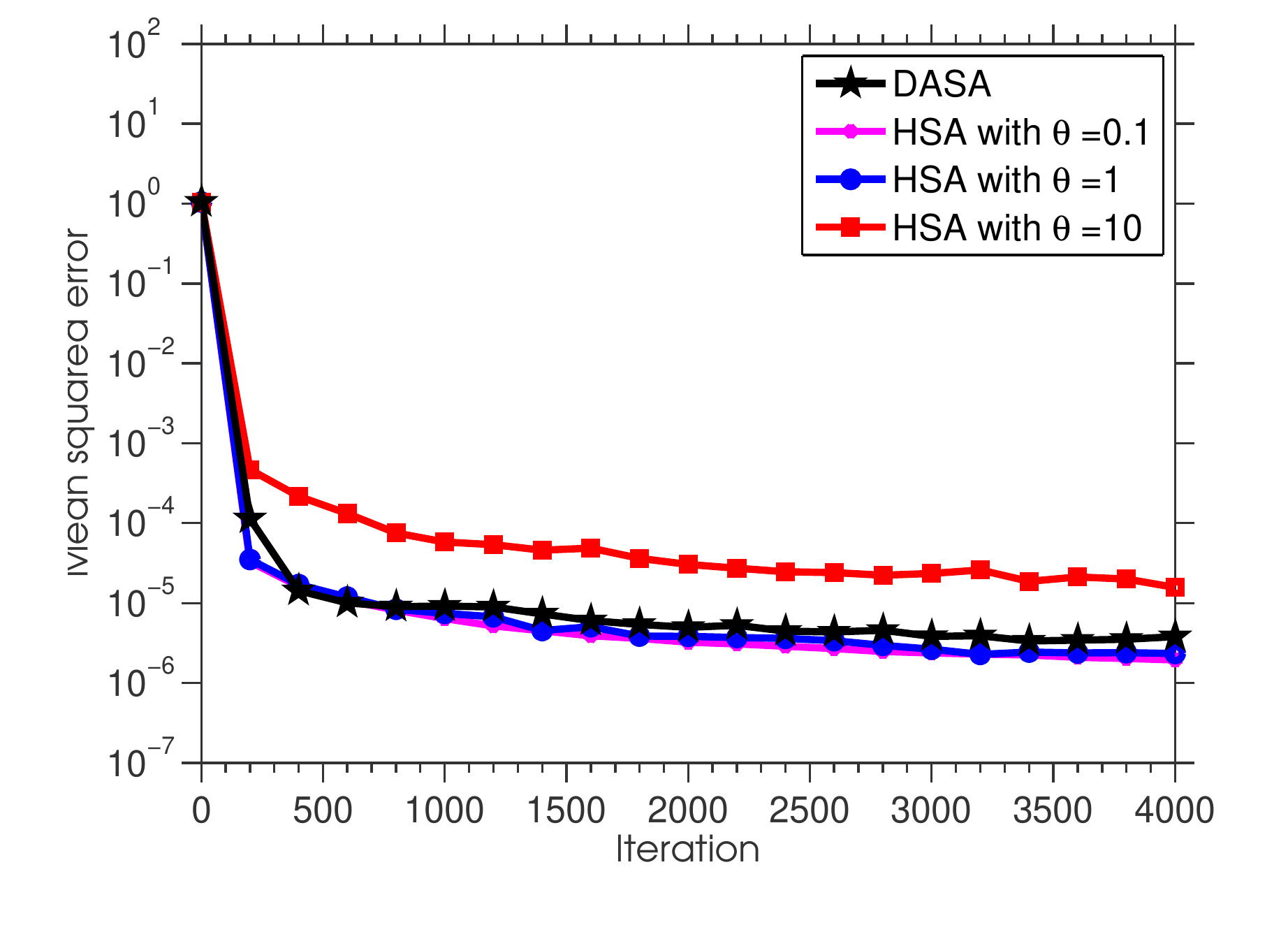}}
 \subfloat[Setting S$(4)$]
{\label{fig:traffic_prob4}\includegraphics[scale=.25]{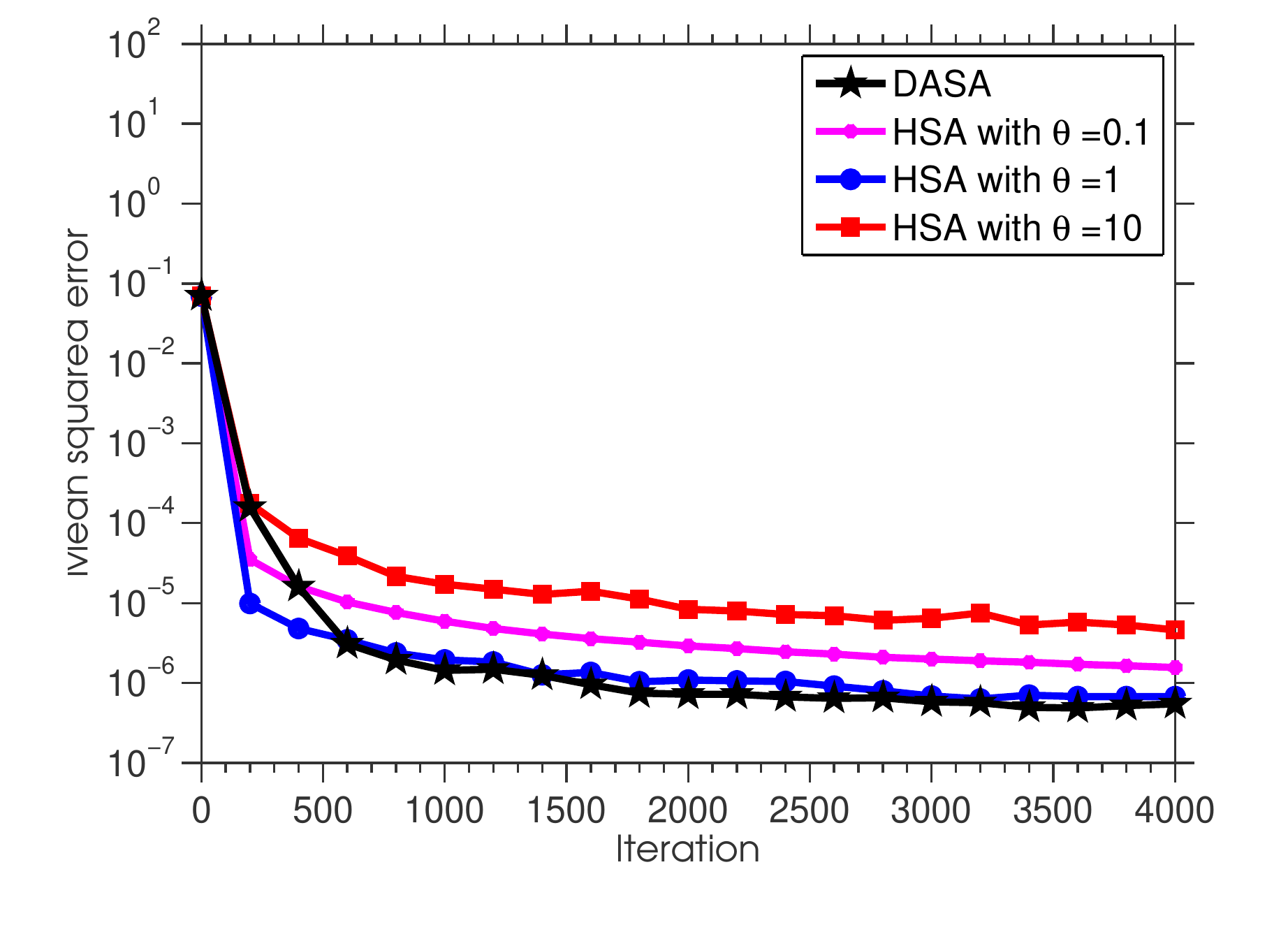}}
 \subfloat[Setting S$(11)$]{\label{fig:traffic_prob11}\includegraphics[scale=.25]{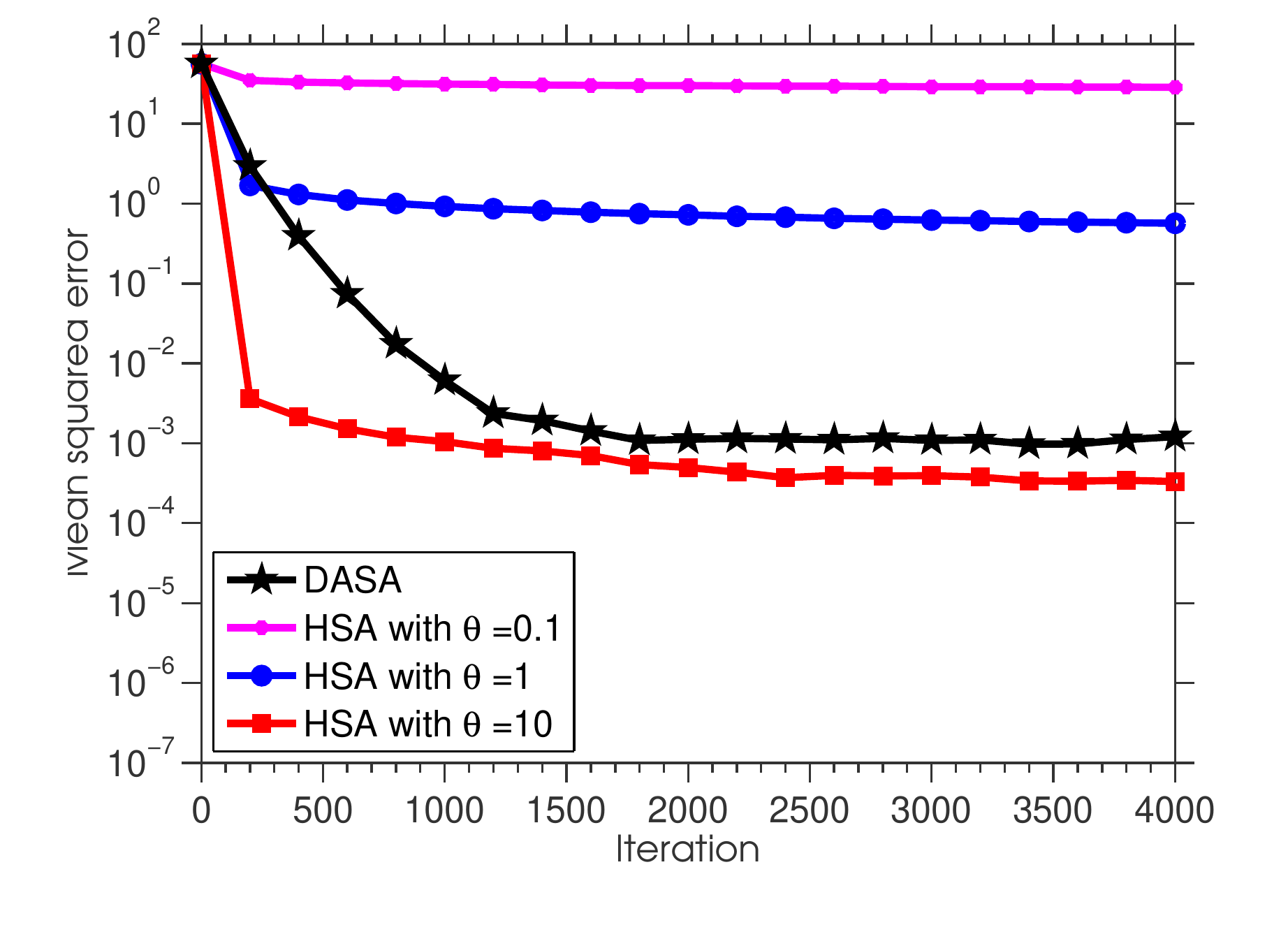}}
\caption{The bandwidth-sharing problem -- MSE -- DASA vs. HSA schemes}
\label{fig:traffic_all}
\end{figure} For example, HSA scheme with $\theta=10$ performs poorly in settings S$(1)$ and S$(4)$, while it outperforms other schemes in setting S$(11)$. We also observe that changing \sfy{$m_b$} from $1$ to $0.1$ does not affect the error. This is because the optimal solution $x^*$ remains feasible for a smaller vector $B$. On the other hand, the error decreases when we use \sfy{$m_b=0.01$}. Figure \ref{fig:traffic_flow} presents the flow rates of the users in different routes for the setting $S(4)$. \begin{figure}[htb]
 \centering
 \subfloat[DASA]
{\label{fig:traffic_DASA}\includegraphics[scale=.25]{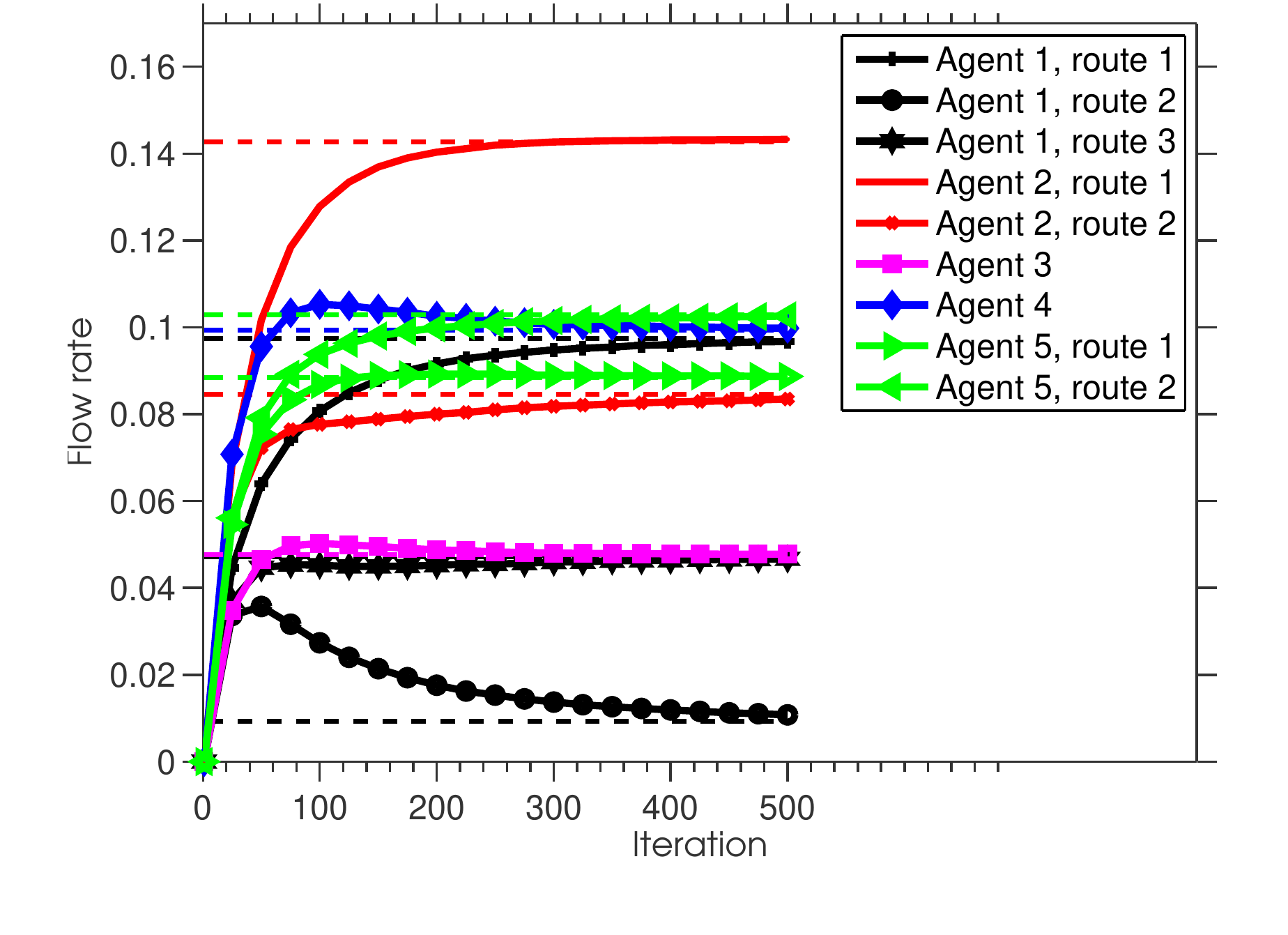}}
 \subfloat[HSA with $\theta=1$]
{\label{fig:traffic_hsa1}\includegraphics[scale=.25]{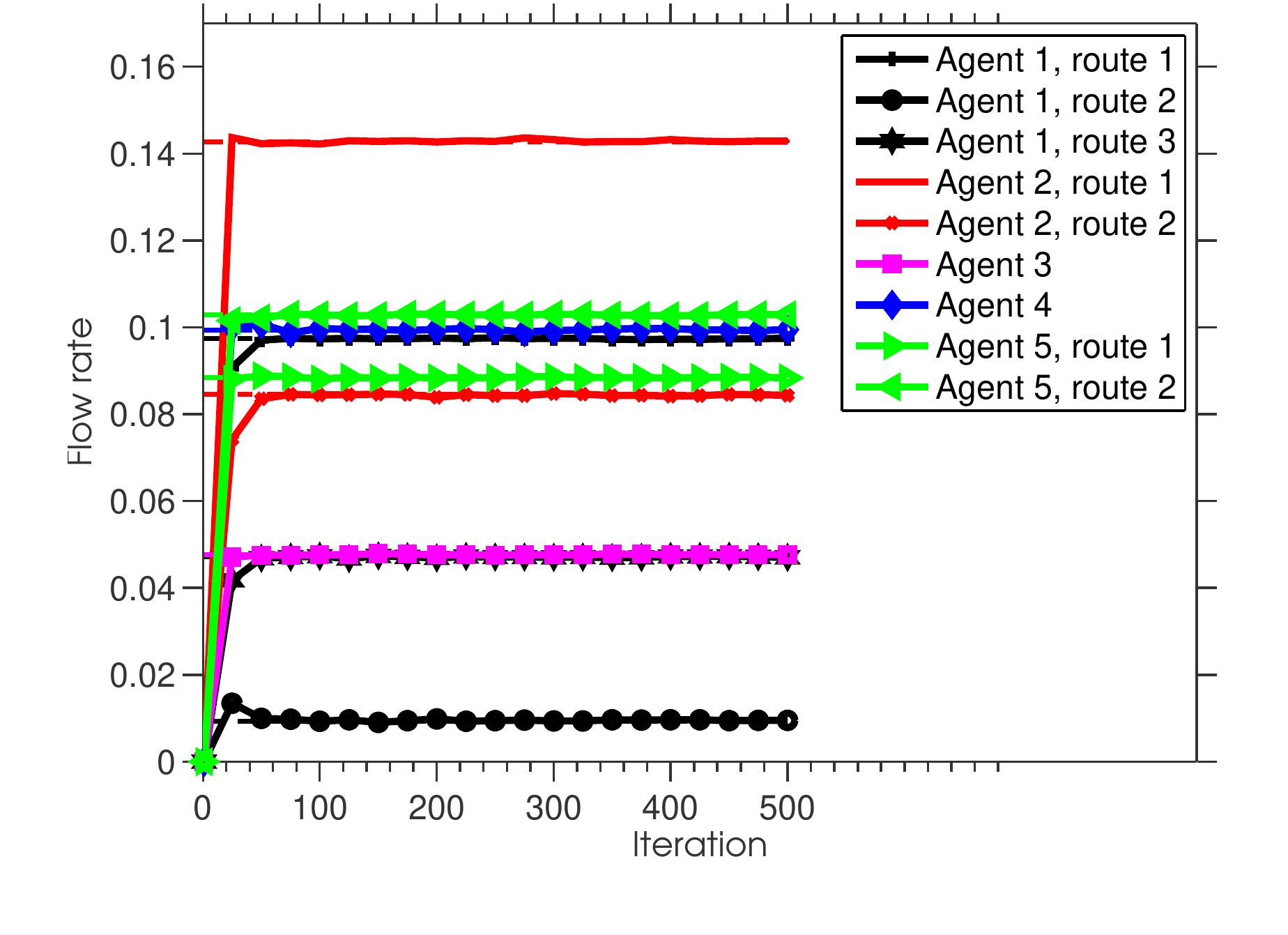}}
 \subfloat[HSA with $\theta=10$]{\label{fig:traffic_hsa10}\includegraphics[scale=.25]{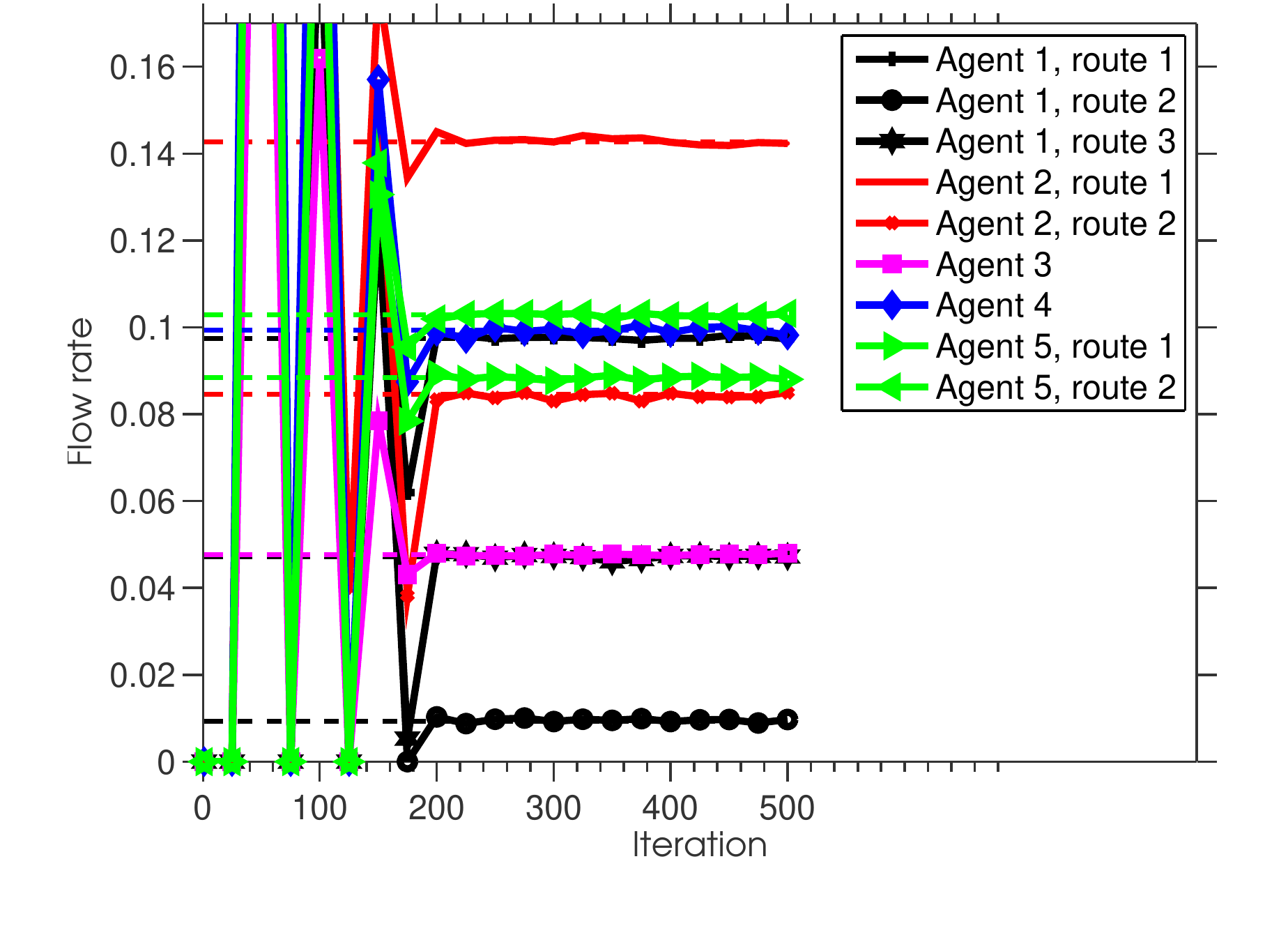}}
\caption{The bandwidth-sharing problem -- flow rates for the setting S$(4)$}
\label{fig:traffic_flow}
\end{figure}One immediate observation is that the flow rates of HSA scheme with $\theta=10$ fluctuates noticeably in the beginning due to a very large stepsize. Figure \ref{fig:traffic_all_conf} provides an image of the $90\%$ CIs for the setting $S(4)$.   	\begin{figure}[htb]
 \centering
 \subfloat[DASA]
{\label{fig:traffic_DASA_conf}\includegraphics[scale=.20]{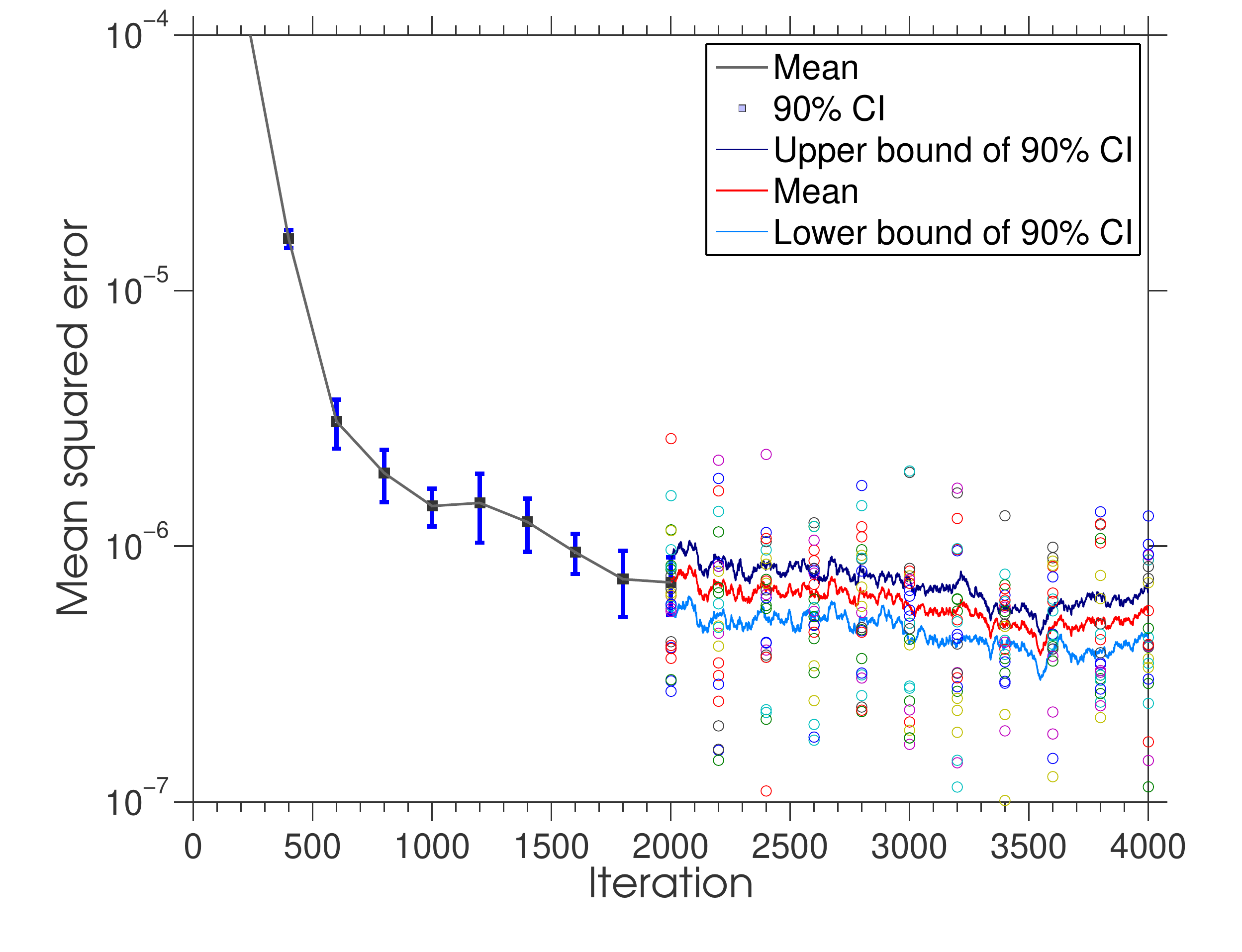}}
 \subfloat[HSA with $\theta=1$]
{\label{fig:traffic_hsa1_conf}\includegraphics[scale=.20]{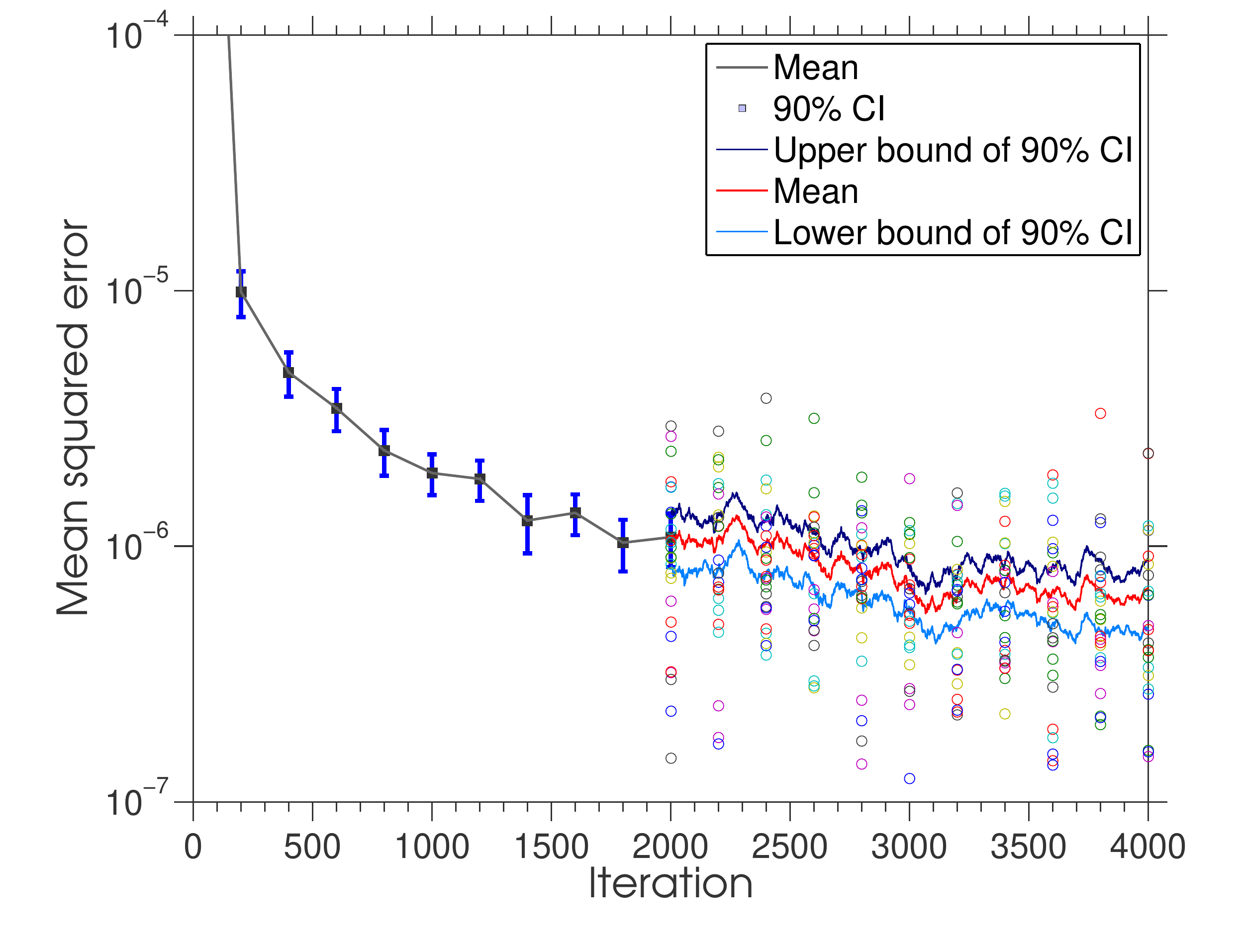}}
 \subfloat[HSA with $\theta=10$]{\label{fig:traffic_hsa10_conf}\includegraphics[scale=.20]{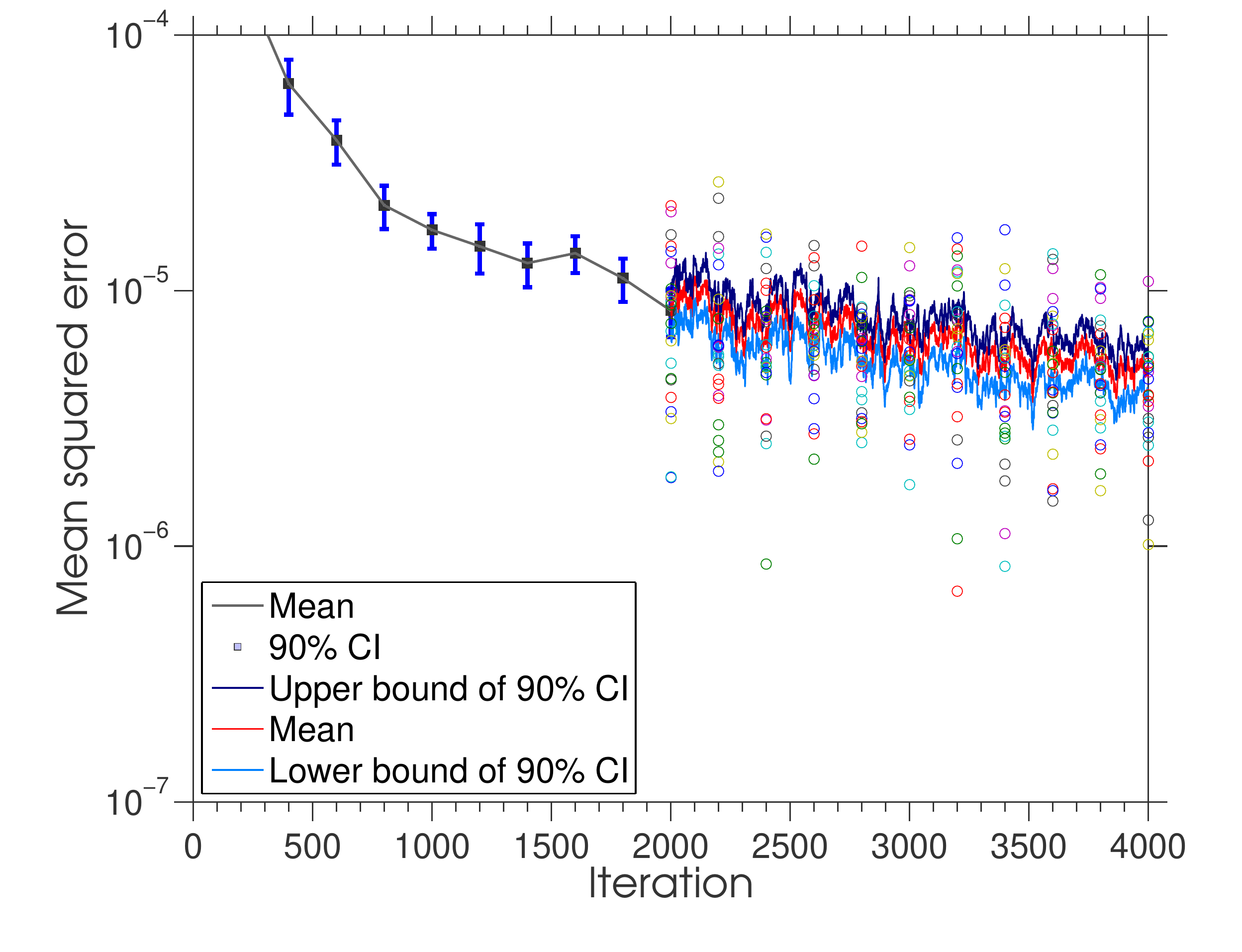}}
\caption{The bandwidth-sharing problem -- $90\%$ CIs for the setting S$(4)$}
\label{fig:traffic_all_conf}
\end{figure}We used two formats to present the intervals. The left-hand side half of each plot shows the intervals  with line segments, while the other half shows the lower and upper bound of the intervals continuously. The colorful points represent the $25$ sample errors at corresponding iterations. We see that the DASA scheme and HSA scheme with $\theta=1$ have CIs with similar size and a smooth mean while the mean in HSA scheme with $\theta=10$ is nonsmooth and oscillates more as the algorithm proceeds.

	   \subsection{A networked stochastic Nash-Cournot
		   game}\label{sec:5.1}
Consider a networked Nash-Cournot game akin to that described in Example
\ref{ex:st-nc}. Specifically, let firm $i$'s generation and
sales decisions at node $j$  be given by $g_{ij}$ and $s_{ij}$,
	  respectively. Suppose the price function $p_{j}$ is given by $p_{j}(\bar s_j,a_j,b_j)=a_j-b_j\bar
	  s_{j}^\sigma$, where  $\bar s_{j}=\sum_{i}s_{ij}$, $\sigma \geq 1$
	  and $a_j$ and $b_j$ are uniformly distributed random variables defined over the
	  intervals $[lb_j^a,ub_j^a]$ and $[lb_j^b,ub_j^b]$, respectively.
	  For purposes of simplicity, we assume that the generation cost is
	  linear and is given by $c_{ij} g_{ij}$. We also impose a bound
	  on sales decisions, as specified $s_{ij}\leq cap_{ij}^\prime$ for
	  all $i$ and $j$. Note that sales decisions are always bounded by
	  aggregate generation capacity. The optimization model for the $i$-th firm is given by:
\begin{align}\label{eqn:ncproblem2}
\displaystyle \mbox{minimize} & \qquad  \EXP{ \sum_{j =1}^M \left( c_{ij}
		g_{ij} - s_{ij}(a_j-b_j\bar
	  s_{j}^\sigma)
		\right)} \\
\mbox{subject to} & \qquad x_i=(s_{i\cdot};g_{i\cdot}) \in X_i \triangleq \left\{\begin{aligned}  & \sum_{j=1}^M
g_{ij}   	= \sum_{j=1}^M s_{ij}, \notag \\ 
& g_{ij}  \leq \mathrm{cap}_{ij},\quad s_{ij}  \leq \mathrm{cap}_{ij}^\prime, \qquad j = 1,
	\hdots, M, 		 \notag	\\
&	g_{ij}, s_{ij}   \geq 0, \qquad j = 1,
	\hdots, M. 		 \notag	
		\end{aligned}\right\}.
\end{align}
As discussed in \cite{kannan10online}, when $1<\sigma \leq 3$ and $M
\leq \frac{3\sigma-1}{\sigma-1}$, the mapping $F$ is strictly monotone and strong
monotonicity can be induced using a regularized mapping, given that our
interest lies in strongly monotone problems. On the other hand, when $\sigma >1$, it is difficult to check that mapping $F$ has Lipschitzian property. This motivates us to employ the distributed locally randomized SA schemes introduced in Sec.
\ref{sec:conv_smoothing}. Now, using regularization and randomized schemes, we would like to solve the
VI$(X,F^\e+\eta\bf{I})$, where $\eta>0$ is the regularization parameter
and $F^\e$ is defined by (\ref{eqn:DLRT-def}). As a consequence, this
problem admits a unique solution denoted by $x^*_{\eta,\e}$.

\subsubsection{SA algorithms}\label{Sec:5.2.1}
In this experiment, we use four different SA schemes for solving
VI$(X,F^\e+\eta\bf{I})$ described in Sec. \ref{sec:dasa} and Sec.
\ref{sec:convergence-no-Lip}: 
\paragraph{MSR-DASA scheme.} In this scheme, we employ the algorithm
(\ref{eqn:DLRSA}) and assume that the random vector $z$ is generated via
the MSR scheme, i.e., $z_i$ is uniformly distributed on the set
$B_{n_i}(0,\e_i)$ while the mapping $F_\e$ is defined by (\ref{eqn:DLRT-def}). One
	immediate benefit of applying this scheme is that the Lipschitzian
	parameter can be estimated from Prop.
	\ref{lemma:DRSS-Lipschitz}b. Moreover, we assume that the stepsizes $\g_{k,i}$ are given by (\ref{eqn:gi0})-(\ref{eqn:gik}). The multiplier $r_i$ is randomly
	chosen for each firm within the prescribed range. The constant $c$
	is maintained at $\frac{\eta}{4}$. Parameters $D$ and $\nu$ need to
	be estimated, while the Lipschitzian parameter $L$
	is obtained by Prop. \ref{lemma:DRSS-Lipschitz}b, i.e., \[L=\sqrt{N}\|C\|
\max_{j=1,\ldots,N}\left\{\kappa_j\frac{n_j!!}{(n_j-1)!!}\,\frac{1}{\e_j}\right\}.\]

	\paragraph{MSR-HSA schemes.} Analogous to the MSR-DASA scheme, this
	scheme uses the distributed locally randomized SA algorithm
	(\ref{eqn:DLRSA}) where for any $i=1,\ldots, N$, the random vector
	$z_i$ is uniformly drawn from the ball $B_{n_i}(0,\e_i)$ and mapping
	$F_\e$ is defined by (\ref{eqn:DLRT-def}). The difference is that
	here we use the harmonic stepsize of the form $\frac{\theta}{k}$ at
	$k$-th iteration for any firm, where $\theta>0$. 
	  
	\paragraph{MCR-DASA scheme.} This scheme is similar to the MSR-DASA
	scheme with one key difference. We assume that random vector $z$ is
	generated by the MCR scheme, i.e., for any $i=1,\ldots, N$, random
	vector $z_i$ is uniformly drawn from the cube $C_{n_i}(0,\e_i)$
	independent from any $z_j$ with $j\neq i$. The Lipschitz constant
	$L$ required for calculating the stepsizes is given by Prop.
	\ref{lemma:DRCS-Lipschitz}b:
	\[L=\frac{\sqrt{n}\|C^\prime\|}{\min_{j=1,\ldots,N} \{\e_j\}}.\]
	
	\paragraph{MCR-HSA schemes.} This scheme uses the algorithm
	(\ref{eqn:DLRSA}) with multi-cubic uniform random variable $z$. The
	stepsizes in this scheme are harmonic of the form
	$\frac{\theta}{k}$. 

To obtain the solution $x^*_{\eta,\e}$, we use the HSA scheme with the
stepsizes $\frac{1}{k}$ using $20000$ iterations. Note that in this
experiment, when we use the DASA scheme, we allow that the condition
$\nu \geq \frac{DL}{\sqrt{2}}$ is violated and we replace it with $\nu
\geq D$. The condition $\nu \geq D$ keeps the adaptive stepsizes
positive for any $k$. As a consequence of ignoring $\nu \geq
\frac{DL}{\sqrt{2}}$, the adaptive stepsizes become larger and in the
order of the harmonic stepsizes in our analysis. Note that by this
change, the convergence of the DASA algorithm is still guaranteed, while
the result of Theorem \ref{prop:DASA}d does not hold necessarily.

\subsubsection{Sensitivity analysis}
We consider a Nash-Cournot game with $5$ firms over a network with $3$
nodes. We set $\sigma=1.1$, $lb_j^b=0.04$, $ub_j^b=0.05$, and $lb_j^a=1$ for any $j$ and $ub^a=(1.5;2;2.5)$. Having these parameters fixed, our test problems
are generated by changing other model's parameters. These parameters are
as follows: the parameter of locally randomized schemes $\e$, the regularization
parameter $\eta$, the starting point of the SA algorithm $x_0$, and the multiplier $M_a$ for the random variable $a_j$ for any $j$. We also consider two different settings for $\mathrm{cap}_{ij}$ and $\mathrm{cap}^\prime_{ij}$. Note that when $\mathrm{cap}_{ij}=1$, the constraints $s_{ij} \leq 3$ are redundant and can be removed. In our analysis we assume that $\e_i \triangleq\e$ is identical for all firms. 
\begin{table}[htb] 
\vspace{-0.05in} 
\tiny 
\centering 
\begin{tabular}{|c|c|c|c|c|c|c|c|} 
\hline 
-&S$(i)$ &  $\epsilon$ & $\eta$ & $x_0$ & $M_a$  & $\mathrm{cap}_{ij}$ & $\mathrm{cap}_{ij}^\prime$ 
\\ 
\hline 
\hline
$\e$ &1&  0.1 & 0.1 & $P_1$ & $1$  & $1$ & $3$
  \\

\hbox{ }& 2 & 0.001 & 0.1 & $P_1$ & $1$  & $1$ & $3$
  \\

\hbox{ }& 3 & 0.0001& 0.1 & $P_1$ & $1$  & $1$ & $3$ \\ 
\hline 

$\eta$ &4&  0.1& 0.1 & $P_2$ & $1$  & $10$ & $1$
  \\

\hbox{ }& 5 & 0.1& 0.05 & $P_2$ & $1$  & $10$ & $1$
  \\

\hbox{ }& 6 & 0.1& 0.01 & $P_2$ & $1$  & $10$ & $1$\\ 
\hline 
$x_0$ &7&  0.1 & 1 & $P_1$ & $6$  & $10$ & $1$
  \\ 
 
\hbox{ }& 8 & 0.1 & 1 & $P_2$ & $6$  & $10$ & $1$
  \\

\hbox{ }& 9 &0.1 & 1 & $P_3$ & $6$  & $10$ & $1$ \\ 
\hline

$M_a$ &10&  0.01 & 0.5 & $P_2$ & $2$  & $1$ & $3$
  \\ 
 
\hbox{ }& 11& 0.01 & 0.5 & $P_2$ & $4$  & $1$ & $3$
  \\

\hbox{ }& 12 & 0.01 & 0.5 & $P_2$ & $6$  & $1$ & $3$\\ 
\hline
 

\end{tabular} 
\caption{The stochastic Nash-Cournot game -- settings of parameters} 
\label{tab:Nash-Cournot} 
\vspace{-0.1in} 
\end{table}	
Similar to the first experiment in Sec. \ref{sec:5.1.2}, we consider a set of test problems
corresponding to each of these parameters. In each set, one parameter
changes and takes $3$ different values, while other parameters are
fixed. Table \ref{tab:Nash-Cournot} represents $12$ test problems as
described. Note that $P_1$, $P_2$, and $P_3$ are three different
feasible starting points. More precisely, $P_1=0$, $P_2=0.5(\mathrm{cap}^\prime;\mathrm{cap})$, and $P_3=(\mathrm{cap}^\prime;\mathrm{cap})$. 
Similar to the first experiment, the termination criteria is running the SA algorithms for $4000$ iterates. We run each algorithm $25$ times and then we obtain the MSE of the form $\frac{1}{25}\sum_{i=1}^{25}\|x_k^i-x^*_{\eta,\e}\|^2$ for any $k=1,\ldots,4000$.
Table \ref{tab:Nash-Cournot3} and Table \ref{tab:Nash-Cournot2} show the $90\%$ CIs of the error for the described schemes. 

\subsubsection{Results and insights}
Table \ref{tab:Nash-Cournot3} presents the simulation results for the test problems using the MSR-DASA and MSR-HSA schemes. One observation is the effect of changing the parameter $\e$ on the error of the schemes is negligible. We only see a slight change in the error of MSR-HSA scheme with $\theta=10$. Comparing the order of the error, we notice that the MSR-DASA scheme is placed second among all schemes of the first set of the test problems. In the second set, by decreasing $\eta$ the error of all the schemes, except for the MSR-HSA scheme with $\theta=0.1$, first decreases and then increases. This is not an odd observation since we used $x_{\eta,\e}^*$ instead of $x^*$ to measure the errors and $x_{\eta,\e}^*$ changes itself when $\eta$ or $\e$ changes. In this set, the MSR-DASA scheme still has the second best errors among all schemes. The schemes are not much sensitive to the choice of $x_0$ and we observe that the second place is still reserved by the MSR-DASA scheme. Finally, in the last set, we see that increasing the factor $M_a$, as we expect, increases the error in most of the schemes. The reason is that increasing the order of $M_a$ increases both mean and variance of the random variable $a$. Importantly, we observe that our MSR-DASA scheme remains very robust among the MSR-HSA scheme. 
\begin{table}[htb] 
\vspace{-0.05in} 
\tiny 
\centering 
\begin{tabular}{|c|c||c||c||c|c|} 
\hline 
-&S$(i)$ &   DASA - $90\%$ CI &  HSA with $\theta=0.1$- $90\%$ CI & HSA with $\theta=1$ - $90\%$ CI & HSA with $\theta=10$ - $90\%$ CI
\\ 
\hline 
\hline
  $\e$ &1 & [$1.38 $e${-2}$,$2.37 $e${-2}$] &  [$1.83 $e${+1}$,$1.87 $e${+1}$] &  [$1.60 $e${-1}$,$2.15 $e${-1}$]&  [$3.07 $e${-3}$,$5.33 $e${-3}$]
  \\

\hbox{ }&2 & [$1.38 $e${-2}$,$2.37 $e${-2}$] &  [$1.83 $e${+1}$,$1.87 $e${+1}$] &  [$1.60 $e${-1}$,$2.15 $e${-1}$]&  [$3.04 $e${-3}$,$5.30 $e${-3}$]
  \\

\hbox{ }&3& [$1.38 $e${-2}$,$2.37 $e${-2}$] &  [$1.83 $e${+1}$,$1.87 $e${+1}$] &  [$1.60 $e${-1}$,$2.15 $e${-1}$]&  [$3.04 $e${-3}$,$5.30 $e${-3}$]
  \\ 
  
  \hline 
  
  $\eta$ &4 & [$1.92 $e${-3}$,$3.98 $e${-3}$] &  [$1.63$e${-0}$,$1.71 $e${-0}$] &  [$8.43 $e${-3}$,$1.62 $e${-2}$]&  [$5.28 $e${-4}$,$1.08 $e${-3}$]
  \\

\hbox{ }&5 & [$1.42 $e${-3}$,$3.12 $e${-3}$] &  [$1.84$e${-0}$,$1.93 $e${-0}$] &  [$7.43 $e${-3}$,$1.44 $e${-2}$]&  [$2.59 $e${-4}$,$5.76 $e${-4}$]
  \\

\hbox{ }&6 & [$5.61 $e${-3}$,$1.62 $e${-2}$] &  [$2.33$e${-0}$,$2.44 $e${-0}$] &  [$1.61 $e${-2}$,$2.39 $e${-2}$]&  [$5.06 $e${-4}$,$8.65 $e${-4}$]
  \\ 
  \hline

 $x_0$ &7& [$2.68 $e${-6}$,$3.48 $e${-6}$] &  [$4.37 $e${-1}$,$5.13 $e${-1}$] &  [$1.37 $e${-6}$,$1.92 $e${-6}$]&  [$6.71 $e${-6}$,$9.21 $e${-6}$] 
  \\

\hbox{ }&8 &[$2.68 $e${-6}$,$3.48 $e${-6}$] &  [$2.22 $e${-5}$,$2.91 $e${-5}$] &  [$1.37 $e${-6}$,$1.92 $e${-6}$]&  [$6.71 $e${-6}$,$9.21 $e${-6}$] 
  \\

\hbox{ }&9 & [$2.68 $e${-6}$,$3.48 $e${-6}$] &  [$2.22 $e${-5}$,$2.91 $e${-5}$] &  [$1.37 $e${-6}$,$1.92 $e${-6}$]&  [$6.71 $e${-6}$,$9.21 $e${-6}$] 
  \\
  \hline
  
  $M_a$ &10 & [$4.45 $e${-3}$,$9.25 $e${-3}$] &  [$5.79 $e${-1}$,$9.25 $e${-1}$] &  [$1.67 $e${-3}$,$5.72 $e${-3}$]&  [$2.72 $e${-5}$,$2.07 $e${-2}$]
  \\

\hbox{ }&11 & [$8.85 $e${-3}$,$1.73 $e${-2}$] &  [$1.25 $e${-0}$,$2.12 $e${-0}$] &  [$9.38 $e${-4}$,$1.82 $e${-2}$]&  [$4.52 $e${-3}$,$3.22 $e${-2}$]
  \\

\hbox{ }&12 & [$1.92 $e${-2}$,$3.91 $e${-2}$] &  [$8.51 $e${-1}$,$2.31 $e${-0}$] &  [$1.87 $e${-3}$,$4.15$e${-2}$]&  [$1.04 $e${-2}$,$7.23 $e${-2}$]
  \\ 

  \hline

\end{tabular} 
\caption{The stochastic Nash-Cournot game -- $90\%$ CIs for MSR-DASA and MSR-HSA schemes} 
\label{tab:Nash-Cournot3} 
\vspace{-0.1in} 
\end{table}	
Table \ref{tab:Nash-Cournot2} shows the error estimations using the MCR-DASA and MCR-HSA schemes. Comparing these results with the MSR schemes in Table \ref{tab:Nash-Cournot3}, we see that the sensitivity of the MCR schemes to the parameters is very similar to that of MSR schemes and the MCR-DASA scheme performs as the second best among all MCR schemes. We also see that in most of the settings, the error of the MSR-DASA scheme is slightly smaller than the error of the MCR-DASA scheme. One reason can be that the MSR scheme has a smaller Lipschitz constant than the MCR scheme for our problem settings.
\begin{table}[htb] 
\vspace{-0.05in} 
\tiny 
\centering 
\begin{tabular}{|c|c||c||c||c|c|} 
\hline 
-&S$(i)$ &   DASA - $90\%$ CI &  HSA with $\theta=0.1$- $90\%$ CI & HSA with $\theta=1$ - $90\%$ CI & HSA with $\theta=10$ - $90\%$ CI
\\ 
\hline 
\hline
$\e$ &1 & [$1.22 $e${-2}$,$2.55 $e${-2}$] &  [$1.84 $e${+1}$,$1.88 $e${+1}$] &  [$1.78 $e${-1}$,$2.29 $e${-1}$]&  [$2.42 $e${-3}$,$4.21 $e${-3}$]
  \\

\hbox{ }&2 &[$1.21 $e${-2}$,$2.53 $e${-2}$] &  [$1.84 $e${+1}$,$1.88 $e${+1}$] &  [$1.78 $e${-1}$,$2.28 $e${-1}$]&  [$2.37 $e${-3}$,$4.13 $e${-3}$]
  \\

\hbox{ }&3 & [$1.21 $e${-2}$,$2.53 $e${-2}$] &  [$1.84 $e${+1}$,$1.88 $e${+1}$] &  [$1.78 $e${-1}$,$2.28 $e${-1}$]&  [$2.37 $e${-3}$,$4.13 $e${-3}$]
  \\ 
  
  \hline 
$\eta$ &4 & [$4.17 $e${-3}$,$9.50 $e${-3}$] &  [$1.65$e${-0}$,$1.74 $e${-0}$] &  [$9.37 $e${-3}$,$1.84 $e${-2}$]&  [$7.38 $e${-4}$,$1.73 $e${-3}$]
  \\

\hbox{ }&5 & [$1.41 $e${-3}$,$4.06 $e${-3}$] &  [$1.85$e${-0}$,$1.93 $e${-0}$] &  [$6.88 $e${-3}$,$1.32 $e${-2}$]&  [$2.85 $e${-4}$,$5.06 $e${-4}$]
  \\

\hbox{ }&6 & [$8.19 $e${-3}$,$1.88 $e${-2}$] &  [$2.37$e${-0}$,$2.46 $e${-0}$] &  [$1.85 $e${-2}$,$3.10 $e${-2}$]&  [$4.18 $e${-4}$,$7.05 $e${-4}$]
  \\ 
  \hline

 $x_0$ &7 &[$2.25 $e${-5}$,$2.88 $e${-5}$] &  [$4.31 $e${-1}$,$5.12 $e${-1}$] &  [$9.41 $e${-6}$,$1.18 $e${-5}$]&  [$3.99 $e${-5}$,$5.27 $e${-5}$] 
  \\

\hbox{ }&8 & [$2.25 $e${-5}$,$2.88 $e${-5}$] &  [$1.13 $e${-4}$,$1.58 $e${-4}$] &  [$9.40 $e${-6}$,$1.18 $e${-6}$]&  [$3.99 $e${-5}$,$5.27 $e${-5}$] 
  \\

\hbox{ }&9 & [$2.25 $e${-5}$,$2.88 $e${-5}$] &  [$1.13 $e${-4}$,$1.58 $e${-4}$] &  [$9.40 $e${-6}$,$1.18 $e${-5}$]&  [$3.99 $e${-5}$,$5.27 $e${-5}$] 
  \\ 
  
  \hline
  
  $M_a$ &10 & [$1.66 $e${-3}$,$4.29 $e${-3}$] &  [$6.17 $e${-1}$,$8.88 $e${-1}$] &  [$4.21 $e${-4}$,$1.79 $e${-3}$]&  [$3.82 $e${-4}$,$8.30 $e${-3}$]
  \\

\hbox{ }&11 & [$3.03 $e${-3}$,$1.22 $e${-2}$] &  [$1.29 $e${-0}$,$2.23 $e${-0}$] &  [$9.63 $e${-4}$,$5.77 $e${-3}$]&  [$2.48 $e${-3}$,$2.52 $e${-2}$]
  \\

\hbox{ }&12 & [$6.05 $e${-3}$,$2.60 $e${-2}$] &  [$8.50 $e${-1}$,$2.49 $e${-0}$] &  [$2.27 $e${-3}$,$1.29$e${-2}$]&  [$5.54 $e${-3}$,$5.67 $e${-2}$]
  \\ 
  \hline
\end{tabular} 
\caption{The stochastic Nash-Cournot game -- $90\%$ CIs for MCR-DASA and MCR-HSA schemes} 
\label{tab:Nash-Cournot2} 
\vspace{-0.1in} 
\end{table}	

	\begin{figure}[htb]
 \centering
 \subfloat[Setting S$(5)$]
{\label{fig:prob5}\includegraphics[scale=.45]{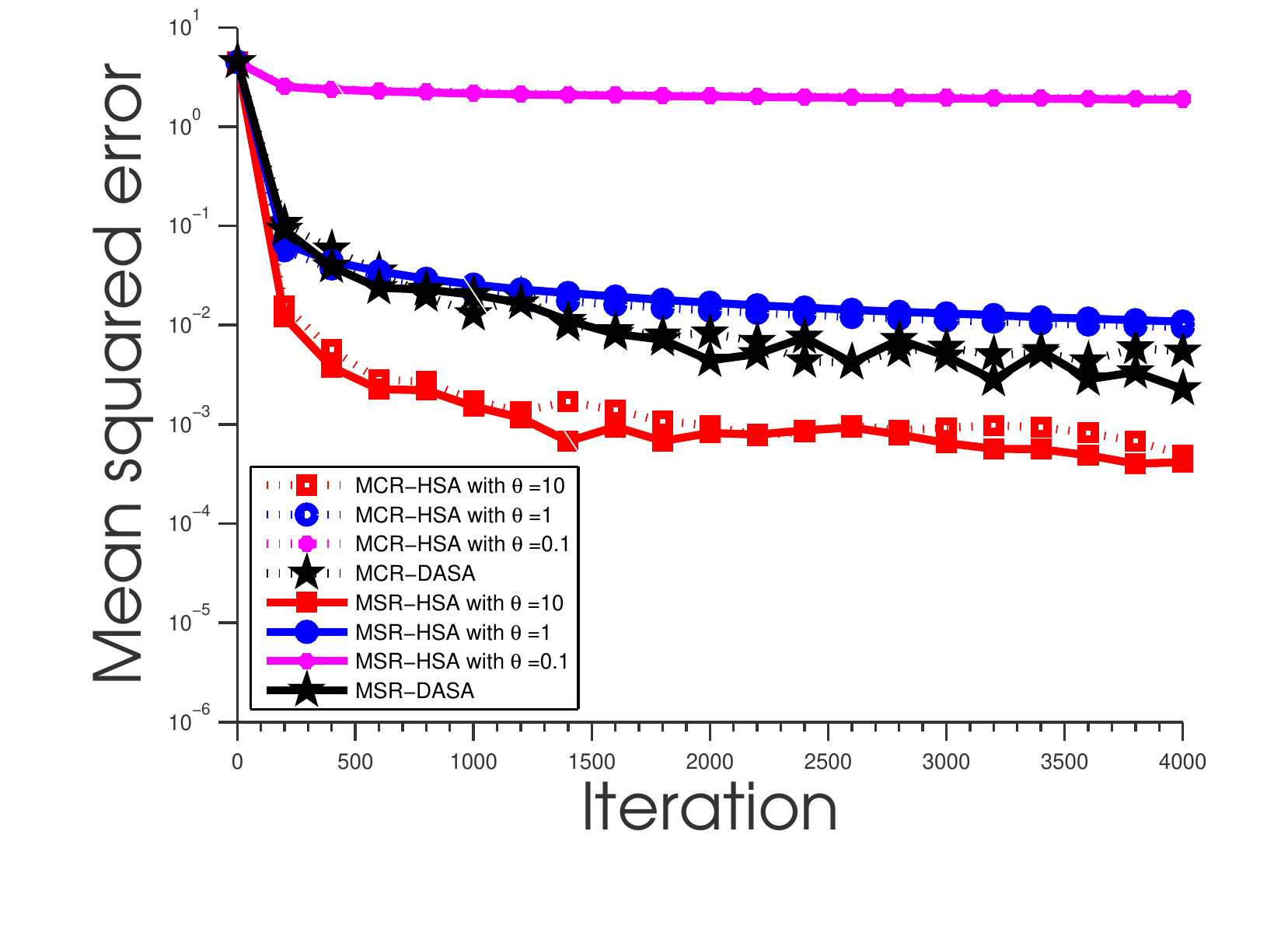}}
 \subfloat[Setting S$(8)$]{\label{fig:prob8}\includegraphics[scale=.45]{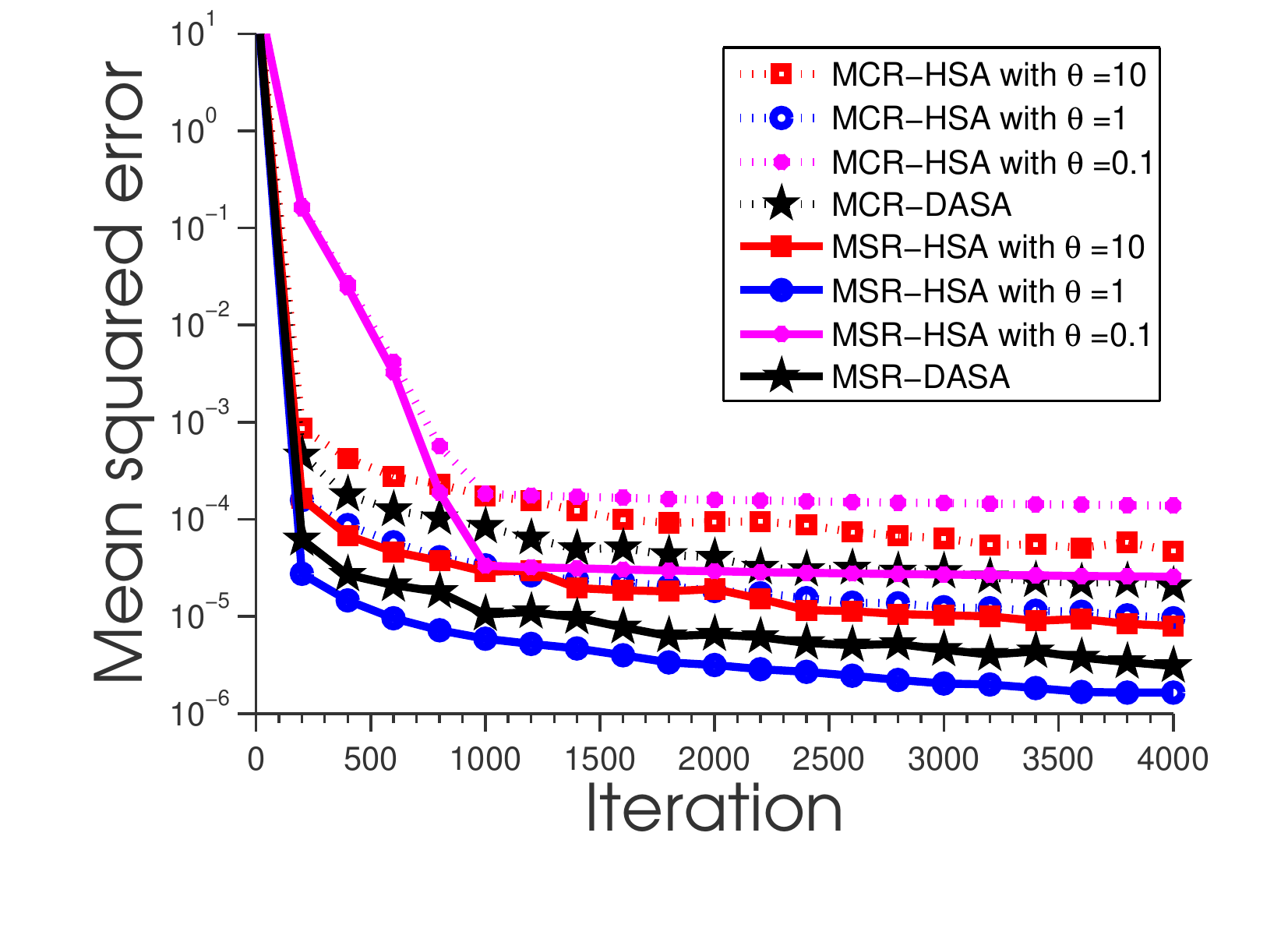}}
\caption{The stochastic Nash-Cournot game -- comparison among all the schemes}
\label{fig:cornout_all}
\end{figure}
Figure \ref{fig:cornout_all} illustrates a comparison among the
different schemes described in Sec. \ref{Sec:5.2.1} for the case of
setting S$(5)$ and S$(8)$. All the MSR schemes are shown with solid
lines, while the MCR schemes are presented with dashed lines. There are
some immediate observations here. Regarding the order of the error, in
both if the settings S$(5)$ and S$(8)$, the schemes with the distributed
adaptive stepsizes given by (\ref{eqn:gi0})-(\ref{eqn:gik}) are the
second best scheme among each of MSR and MCR schemes. This indicates the
robustness of the DASA scheme compared with the HSA schemes. We also
observe that in the setting S$(5)$, the HSA schemes with $\theta=10$
(both MSR and MCR) have the minimum error, while in setting S$(8)$, the
HSA schemes with $\theta=1$ has the minimum error. This is an
illustration of sensitivity of HSA schemes to the setting of problem
parameters. Let us now compare the MSR schemes with the MCR schemes. In
the setting S$(5)$, the MSR and MCR schemes perform very closely and in
fact, it is hard to distinguish the difference between  their errors. On
the other hand, in the setting S$(8)$, we see that the MSR schemes have
a better performance than their MCR counterparts.	
\begin{figure}[htb]
 \centering
 \subfloat[MSR-DASA scheme]
{\label{fig:cournot_MSR-DASA}\includegraphics[scale=.3]{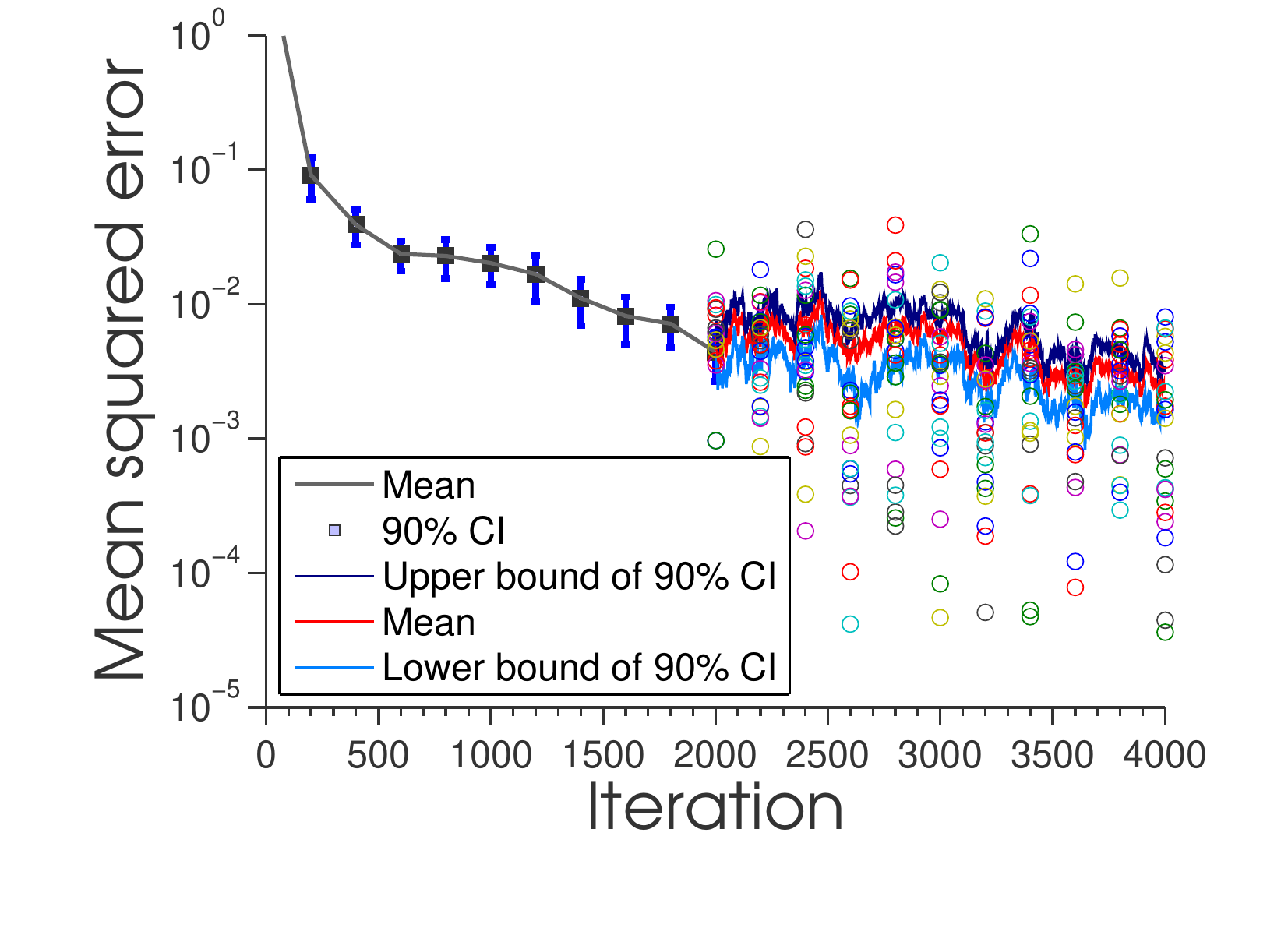}}
 \subfloat[MSR-HSA scheme with $\theta=1$]
{\label{fig:cournot_MSR-HSA1}\includegraphics[scale=.3]{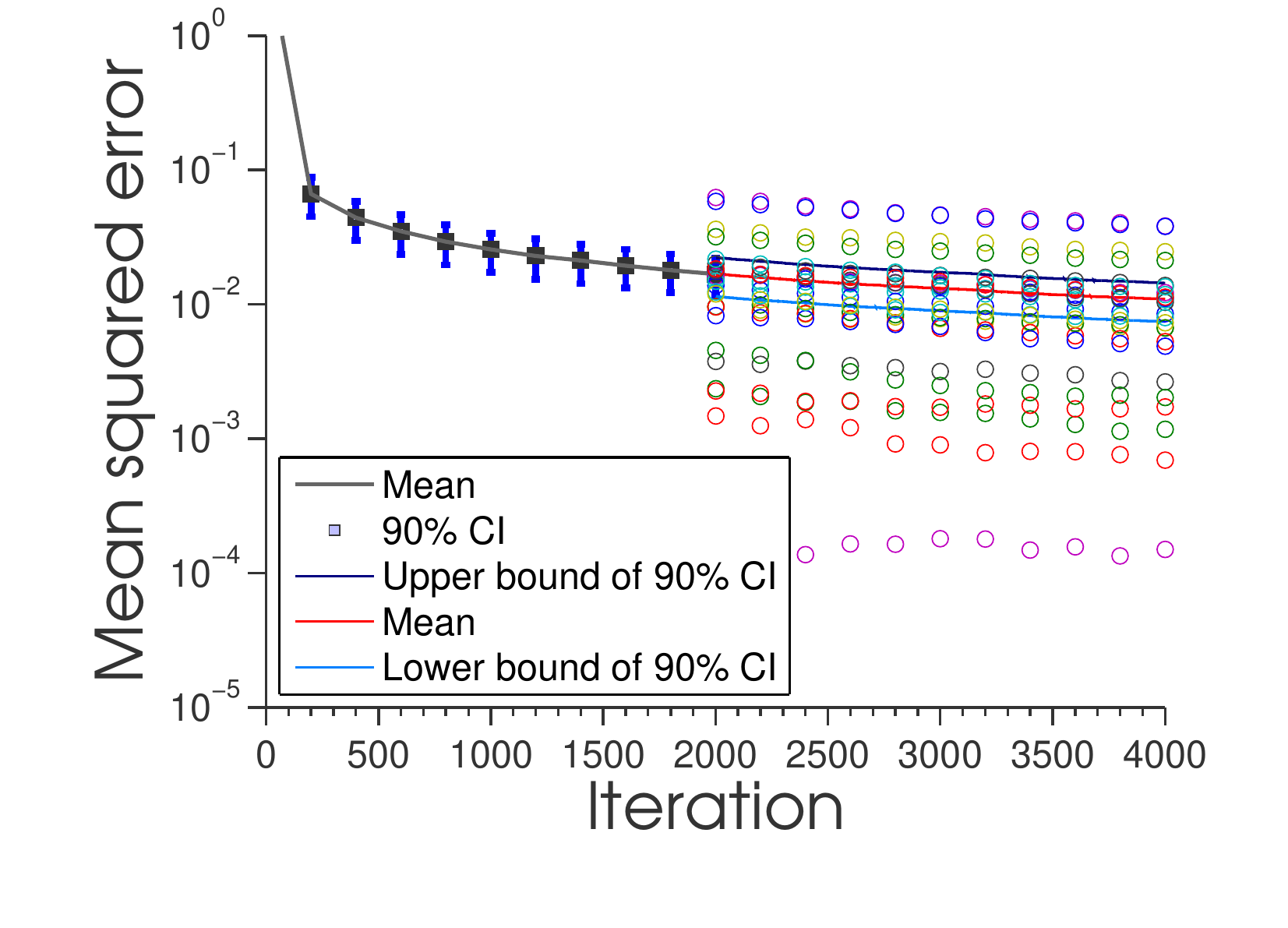}}
 \subfloat[MSR-HSA scheme with $\theta=10$]{\label{fig:cournot_MSR-HSA10}\includegraphics[scale=.3]{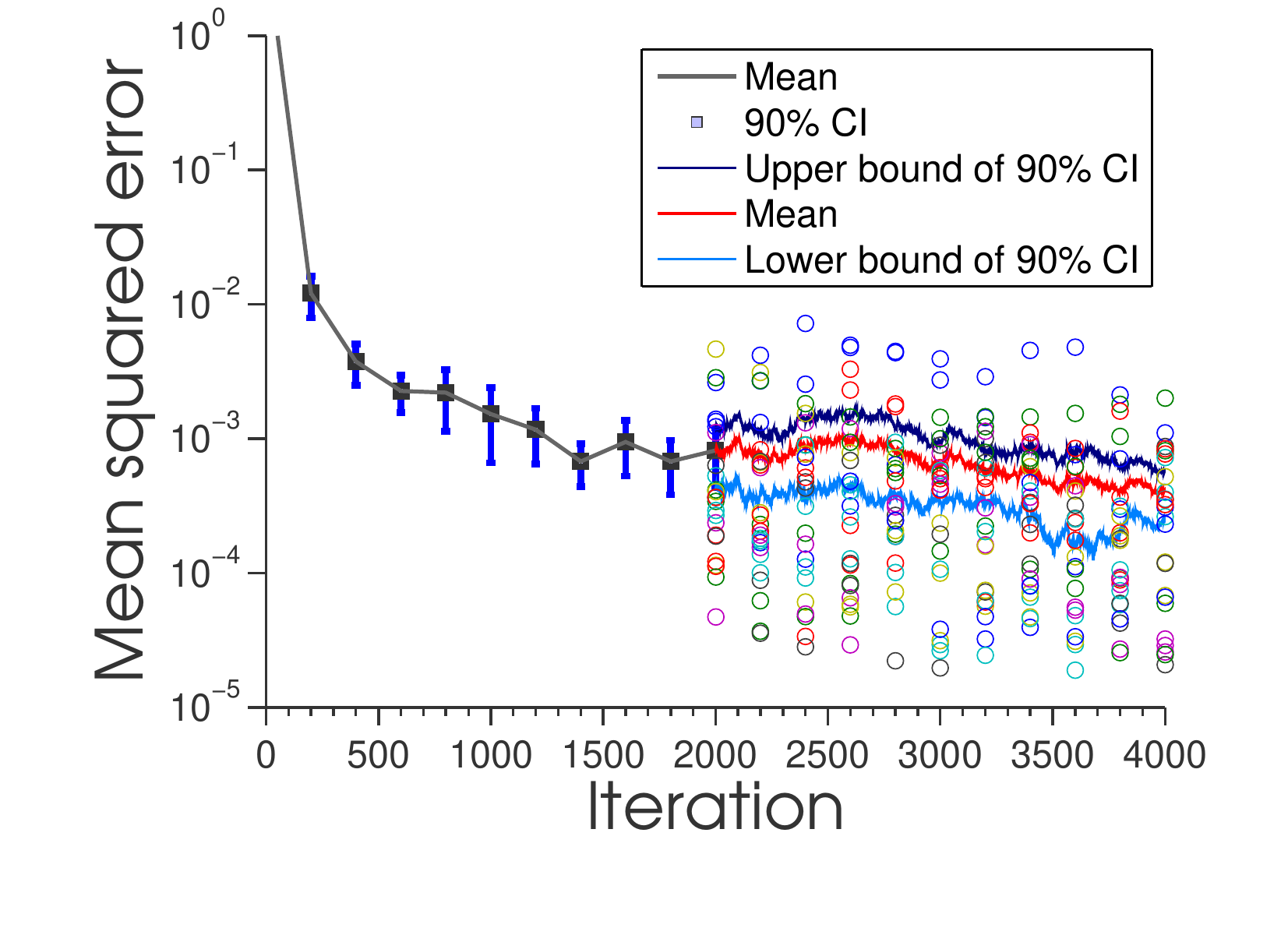}}
\caption{The stochastic Nash-Cournot game -- setting S$(5)$ -- MSR-DASA vs. MSR-HSA schemes}
\label{fig:cournot-conf-MSR}
\end{figure}
Figure \ref{fig:cournot-conf-MSR} illustrates the $90\%$ confindence intervals for the MSR schemes with the setting S$(5)$. Teese intervals are shown with line segments in the left-hand side half of each plot and shown with continious bouns in the right-hand side half. The colourful points present the samples at each level of iterates. Impostantly, we observe that the CIs of MSR-DASA scheme are as tight as the MSR-HSA scheme with $\theta=1$ and they are tighter than the ones in the MSR-HSA scheme with $\theta=10$. Figure \ref{fig:cournot-conf-MCR} shows the similar comparison for the MCR schemes.
	\begin{figure}[htb]
 \centering
 \subfloat[MCR-DASA scheme]
{\label{fig:cournot_DASAMSR}\includegraphics[scale=.3]{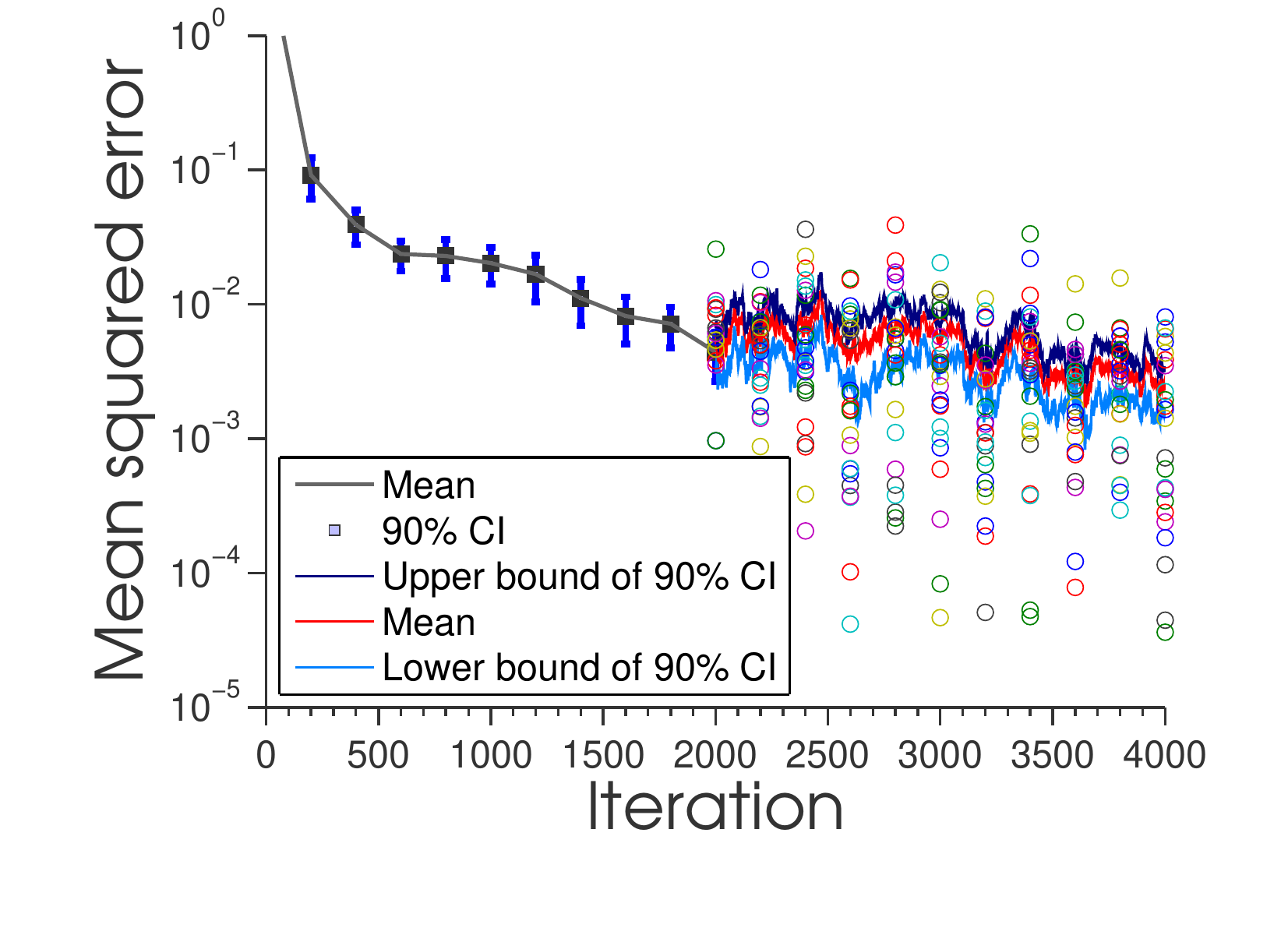}}
 \subfloat[MCR-HSA scheme with $\theta=1$]
{\label{fig:cournot_HSA1MSR}\includegraphics[scale=.3]{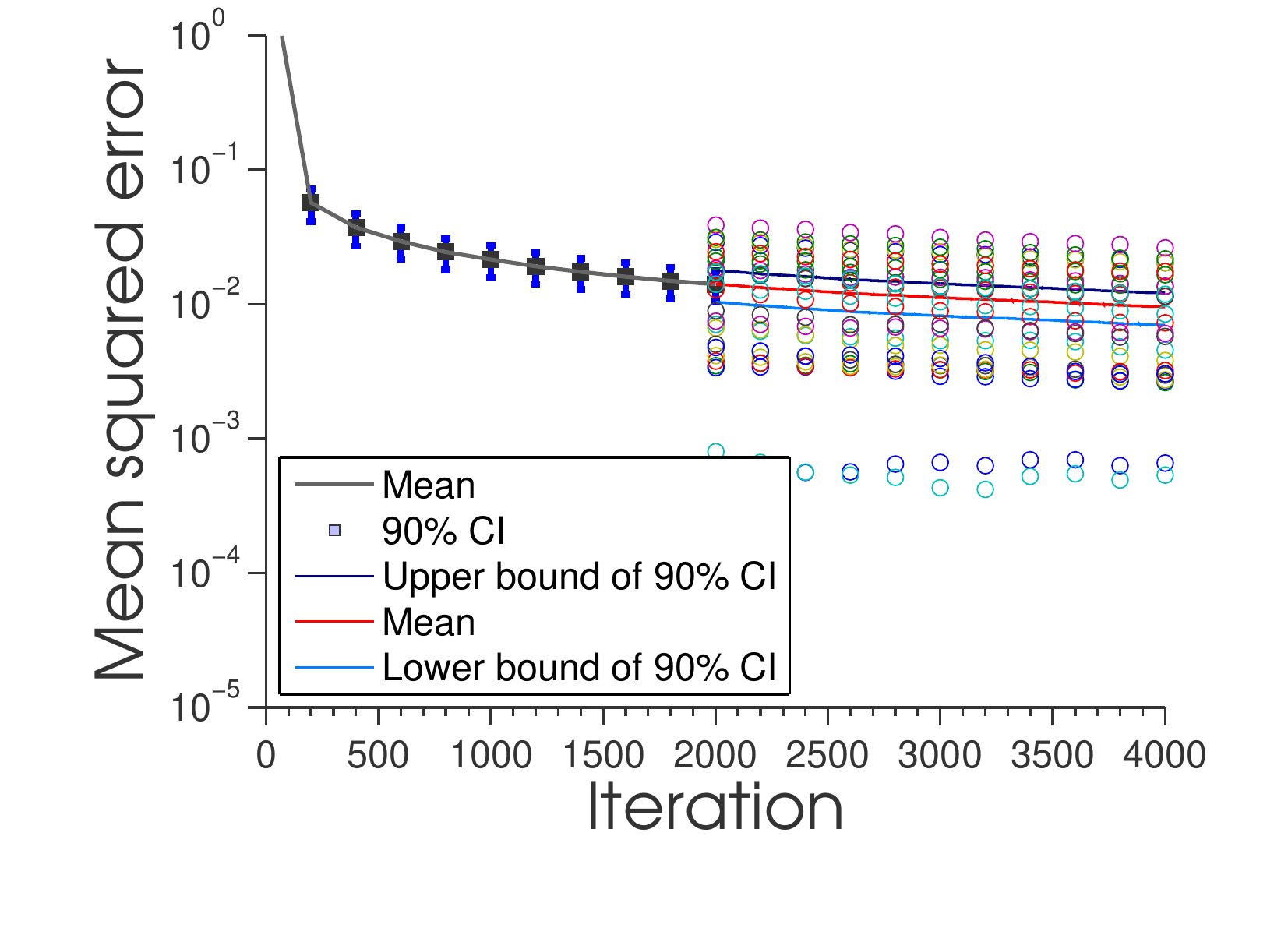}}
 \subfloat[MCR-HSA scheme with $\theta=10$]{\label{fig:cournot_HSA10MSR}\includegraphics[scale=.3]{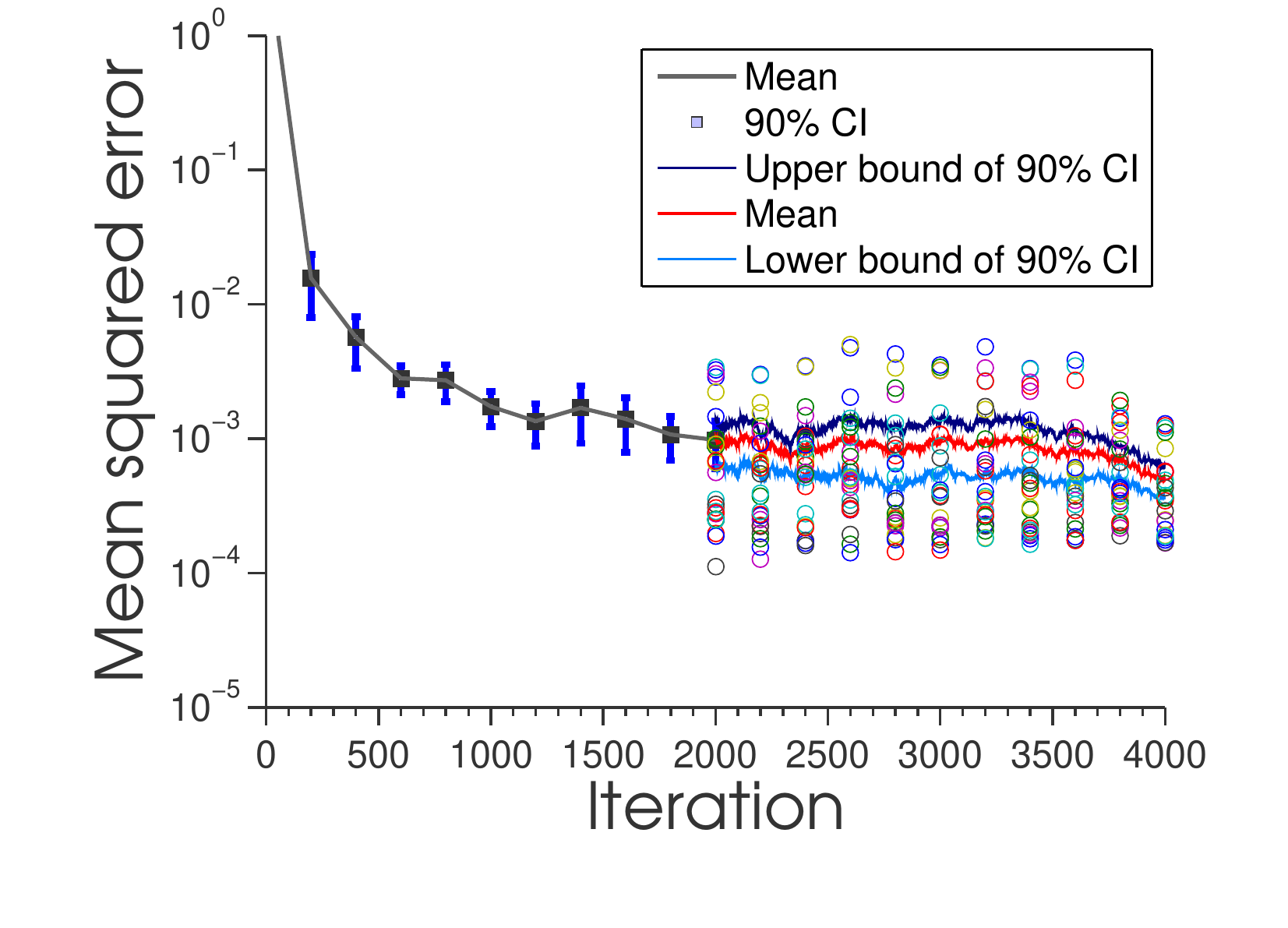}}
\caption{The stochastic Nash-Cournot game -- setting S$(5)$ -- MCR-DASA vs. MCR-HSA schemes}
\label{fig:cournot-conf-MCR}
\end{figure}

\section{Concluding remarks} 
We consider the solution of strongly monotone Cartesian stochastic
variational inequality problems through stochastic approximation (SA)
	schemes.  Motivated by the naive stepsize rules employed in most SA
	implementations, we develop a recursive rule that adapts to problem
	parameters such as the Lipschitz and monotonicity constants of the
	map and ensures almost-sure convergence of the iterates to the
	unique solution. An extension to the distributed multi-agent regime
	is provided. A shortcoming of this approach is the reliance on the
	availability of a Lipschitz constant. This motivates the
	construction of two locally randomized techniques to cope with
	instances where the mapping is either not Lipschitz or estimating
	the parameter is challenging. In each of these techniques, we show
	that an approximation of the original mapping is Lipschitz
	continuous with a prescribed constant. We utilize these techniques
	in developing a distributed locally randomized adaptive steplength
	SA scheme where we perturb the mapping at each iteration by a
	uniform random variable over a prescribed distribution. It is shown
	that this scheme produces iterates that converge to a solution of an
	approximate problem, and the sequence of approximate solutions
	converge to the unique solution of the original stochastic
	variational problem. In Sec.  \ref{sec:numerics}, we apply our
	schemes on two sets of problems, a bandwidth-sharing problem in
	communication networks and a networked stochastic Nash-Cournot game.
	Through these examples, we observed that the adaptive distributed
	stepsize scheme displays far more robustness than the standard
	implementations that leverage harmonic stepsizes of the form
	$\frac{\theta}{k}$ in both problems.  Furthermore, the randomized
	smoothing techniques assume utility in the Cournot regime where
	Lipschitz constants cannot be easily
	derived.  \bibliographystyle{siam}
	\bibliography{ref3,Nash_adapt,ref2} \end{document}